\numberwithin{equation}{section}
\theoremstyle{plain}
\newtheorem{theorem}[equation]{Theorem}
\newtheorem{lemma}[equation]{Lemma}
\newtheorem{corollary}[equation]{Corollary}
\theoremstyle{definition}
\newtheorem{remark}[equation]{Remark}
\newtheorem*{acknowledgment}{Acknowledgment}
\newtheorem*{CLB}{Condition (LB)}
\newtheorem*{CIH}{Condition (IH)}
\newtheorem*{CLH}{Condition (LH)}
\theoremstyle{remark}
\newtheorem*{case1}{Case 1: $\abs{x-y} \leq \sqrt{t-s}<R_{max}$}
\newtheorem*{case2}{Case 2: $\sqrt{t-s}< \abs{x-y} \wedge R_{max}$}
\newtheorem*{case3}{Case 3: $R_{max} \leq \sqrt{t-s}$}
\newcommand{\supp}{\operatorname{supp}}
\newcommand{\dist}{\operatorname{dist}}
\newcommand{\osc}{\operatorname*{osc}}
\newcommand{\esssup}{\operatorname*{ess\,sup}}
\newcommand{\mysection}[1]{\section{#1}
\setcounter{equation}{0}}
\renewcommand{\vec}[1]{\boldsymbol{#1}}
\newcommand{\bR}{\mathbb R}
\newcommand\rV{\mathring{V}}
\newcommand\sL{\mathscr{L}}
\newcommand\sP{\mathscr{P}}
\newcommand\sLt{{}^t\!\mathscr{L}}
\newcommand{\LB}{\mathrm{(LB)}}
\newcommand{\IH}{\mathrm{(IH)}}
\newcommand{\PH}{\mathrm{(PH)}}
\newcommand{\LH}{\mathrm{(LH)}}
\newcommand{\VMO}{\mathrm{VMO}}
\newcommand{\ip}[1]{\left\langle#1\right\rangle}
\providecommand{\set}[1]{\{#1\}}
\providecommand{\Set}[1]{\left\{#1\right\}}
\providecommand{\bigset}[1]{\bigl\{#1\bigr\}}
\providecommand{\abs}[1]{\lvert#1\rvert}
\providecommand{\Abs}[1]{\left\lvert#1\right\rvert}
\providecommand{\bigabs}[1]{\bigl\lvert#1\bigr\rvert}
\providecommand{\Bigabs}[1]{\Bigl\lvert#1\Bigr\rvert}
\providecommand{\norm}[1]{\lVert#1\rVert}
\renewcommand{\epsilon}{\varepsilon}
\renewcommand{\qedsymbol}{$\blacksquare$}
\begin{document}
\title[Green's functions in time-varying domains]{Green's functions for parabolic systems of second order in time-varying domains}

\author[H. Dong]{Hongjie Dong}
\address[H. Dong]{Division of Applied Mathematics, Brown University, 182 George Street, Box F, Providence, RI 02912, USA}
\email{Hongjie\_Dong@brown.edu}

\author[S. Kim]{Seick Kim}
\address[S. Kim]{Department of Mathematics, Yonsei University, Seoul 120-749, Republic of Korea}
\curraddr{Department of Computational Science and Engineering, Yonsei University, Seoul 120-749, Republic of Korea}
\email{kimseick@yonsei.ac.kr}
%\thanks{}

\subjclass[2000]{Primary 35A08, 35K40; Secondary 35B45}
\keywords{Green's function; Green's matrix; parabolic system;time-varying domain}
%\date{Received: date / Revised version: date}

\begin{abstract}
We construct Green's functions for divergence form, second order parabolic systems in non-smooth time-varying domains whose boundaries are locally represented as graph of functions that are Lipschitz continuous in the spatial variables and $1/2$-H\"older continuous in the time variable, under the assumption that weak solutions of the system satisfy an interior H\"older continuity estimate.
We also derive global pointwise estimates for Green's function in such time-varying domains under the assumption that weak solutions of the system vanishing on a portion of the boundary satisfy a certain local boundedness estimate and a local H\"older continuity estimate.
In particular, our results apply to complex perturbations of a single real equation.
\end{abstract}

\maketitle

%------------------------------------------------------------------------------%
\mysection{Introduction} \label{intro}
%------------------------------------------------------------------------------%
 
Green's functions play an important role in the solution of elliptic and parabolic partial differential equations.
There is a large literature on Green's functions of uniformly elliptic and parabolic equations in divergence form.
Green's functions of elliptic equations of divergence form with $L_\infty$ coefficients have been extensively studied by Littman et al. \cite{LSW} and Gr\"uter and Widman \cite{GW};  see also \cite{DM, Fuchs84, Fuchs86}.
Recently, Hofmann and Kim \cite{HK07} gave a unified approach in studying Green's functions for both scalar equations and systems of elliptic type; see also \cite{DK09}.
For parabolic equations, Aronson \cite{Aronson} established two-sided Gaussian bounds for the fundamental solutions of parabolic equations in divergence form with $L_\infty$ coefficients; see also \cite{Auscher, Davies, FS, HK04, Kim, PE} and references therein for related results.
In a recent paper by Cho and the authors \cite{CDK}, we proved that if weak solutions of a given parabolic system satisfy an interior H\"older continuity estimate, then the Green's function of the system exists in any cylindrical domain.
In the scalar case, such an interior H\"older continuity estimate is a consequence of Nash \cite{Nash} and Moser \cite{Moser}, and also such an estimate is available for weak solutions of a system if, for example, its principal coefficients are VMO in the spatial variables.
However, the construction of Green's function in \cite{CDK} heavily relied on the results by Ladyzhenskaya  and Ural'tseva that are available only for cylindrical domains.
In another recent article by Cho and the authors \cite{CDK2}, we demonstrated how one can derive global pointwise estimates for the Green's function in a cylindrical domain by using a local boundedness estimate and a local H\"older estimate for the weak solutions of the parabolic system vanishing on a portion of the boundary.

The aim of this article is to give results similar to those of \cite{CDK, CDK2} for a class non-smooth time-varying domains whose boundaries are given locally as graph of functions that are Lipschitz continuous in the spatial variables and $1/2$-H\"older continuous in the time variable, which hereafter shall be referred to as time-varying $H_1$ domains.
There are many papers dealing with parabolic equations in this type of time-varying domains.
Lewis and Murray \cite{LM95} considered a domain in $\bR\times \bR^d$ of the form $\set {(t,x)\colon\, x^d>f(t,x')}$, where $x'=(x^1,\cdots,x^{d-1})$ and $f$ is a function that is Lipschitz in $x'$ and whose $t$-derivative of order $1/2$ belongs to BMO, which is slightly stronger than $f \in H_1$.
The non-cylindrical domains considered by Hofmann and Lewis in their important paper \cite{HL96} on $L_2$ boundary value problems for the heat equation are also included in time-varying $H_1$ domains; see also \cite{HN02, Ny08, RN}.
Brown et al. \cite{BHL} investigated weak solutions of parabolic equations in time-varying $H_1$ domains and proved the unique solvability of Dirichlet boundary value problems.
We shall in fact utilize their result in constructing the Green's function in time-varying $H_1$ domains.
In contrast, there is little literature on Green's functions of parabolic equations in non-cylindrical domains and to the best of our knowledge, there is no literature dealing with Green's function for parabolic systems of second order with $L_\infty$ coefficients in time-varying $H_1$ domains; see the remarks made in the last paragraph of the introduction.

We denote by $\vec u=(u^1,\ldots, u^m)^T$ a vector-valued function of $d+1$ independent variables $(t,x^1,\ldots,x^d)=(t,x)=X$.
We consider parabolic systems of  second-order
\begin{equation}					\label{eq0.1}
\sL \vec u := \vec u_t - D_\alpha (\vec A^{\alpha\beta} D_\beta \vec u),
\end{equation}
where the usual summation conventions are assumed and $\vec{A}^{\alpha\beta}=\vec{A}^{\alpha\beta}(X)$, for $\alpha,\beta=1,\ldots,d$, are $m\times m$ matrices whose entries are $L_\infty$ functions satisfying the strong ellipticity condition; see Section~\ref{sec:sps} for the details.
We emphasize that the coefficients are not assumed to be time independent or symmetric.
We will later impose some further assumptions on the operator $\sL$ but not explicitly on its coefficients.
By a Green's function for the system \eqref{eq0.1} in a time-varying $H_1$ domain $\Omega$ we mean an $m\times m$ matrix valued function $\vec G(X,Y)=\vec G(t,x,s,y)$ which satisfies the following for all $Y\in\Omega$: 
\begin{align*}
\sL \vec G(\cdot,Y)=\delta_Y(\cdot) I_m\;\text{ in }\;\Omega,\\
\vec G(\cdot,Y)=0\;\text{ on }\; \sP\Omega,
\end{align*}
where $\delta_{Y}(\cdot)$ is a Dirac delta function, $I_m$ is $m\times m$ identity matrix, and $\sP\Omega$ denotes the parabolic boundary of $\Omega$; see Section~\ref{sec:dgf} for more precise definition.
In this article, we prove that if weak solutions of \eqref{eq0.1} satisfy an interior H\"older continuity estimate, then there exists a unique Green's function in $\Omega$ and it satisfies some natural growth properties; see Theorem~\ref{thm1} below.
Moreover, we show that the Green's function also satisfies the following familiar property:
\[
\lim_{t\to s_+} \vec G(t,x,s,\cdot)=\delta_{x}(\cdot) I_m\quad\text{on }\;\omega(s)=\set{y\in\bR^d\colon\, (s,y)\in\Omega}.
\]
We also derive the following global Gaussian estimate for the Green's function in a time-varying $H_1$ domain by using a local boundedness estimate for the weak solutions of \eqref{eq0.1} vanishing on a portion of the boundary:
For any $T>0$, there exists $N>0$ such that for all $X=(t,x)$ and $Y=(s,y) $ in $\Omega$ satisfying $0<t-s<T$, we have
\begin{equation}        \label{eq0.2}
\abs{\vec G (t,x,s,y)} \leq \frac{N}{(t-s)^{d/2}}\exp\Set{-\frac{\kappa\abs{x-y}^2}{t-s}},
\end{equation}
where $\kappa>0$ is a constant independent of $T$; see Theorem~\ref{thm2} and Remark~\ref{rmk3.15}.
In particular, the above estimate \eqref{eq0.2} holds in the scalar case (i.e., when $m=1$) and also in the case of $L_\infty$-perturbation of diagonal systems; see Corollary~\ref{cor1} and Section~\ref{sec:AD} below.
In fact, in such cases, a stronger estimate is available near the boundary.
For any $T>0$, there exists $N>0$ such that for all $X=(t,x)$ and $Y=(s,y) $ in $\Omega$ satisfying $0<t-s<T$, we have
\begin{equation}							\label{eq0.3}
\abs{\vec G(t,x,s,y)} \leq N \left(1 \wedge \frac {d(X)\;} {\abs{X-Y}_\sP} \right)^{\mu} \left(1 \wedge \frac{d(Y)\;} {\abs{X-Y}_\sP}\right)^{\mu} \frac{1}{(t-s)^{d/2}}\exp\Set{-\frac{\kappa\abs{x-y}^2}{t-s}},
\end{equation}
where $\kappa>0$ and $\mu\in (0,1]$ are constants independent of $T$,  and we used the notation $a\wedge b=\min(a,b)$, $\abs{X-Y}_\sP=\max(\sqrt{\abs{t-s}},\abs{x-y})$, and $d(X)=\inf \set{\abs{Z-X}_{\sP}\colon\, Z\in \partial \Omega}$.
We show how to derive a global estimate like \eqref{eq0.3} for the Green's function in a time-varying $H_1$ domain by using a local H\"older continuity estimate for the weak solutions of \eqref{eq0.1} vanishing on a portion of the boundary; see Theorem~\ref{thm3} and Remark~\ref{rmk3.19}.
As mentioned above, the estimate \eqref{eq0.3} particularly holds in the case of $L_\infty$-perturbation of diagonal system as well as in the scalar case; see Corollary~\ref{cor3} and Section~\ref{sec:AD}.

The organization of the paper is as follows.
In Section~\ref{sec:nd}, we introduce some notation and definitions including the precise definitions of time-varying $H_1$ domains and Green's functions of the system \eqref{eq0.1} in such domains.
In Section~\ref{main}, we state our main theorems  and give a few remarks concerning extensions of them.
In Section~\ref{sec:app}, we present some applications of our main results including applications to the scalar case, $L_\infty$-perturbation of diagonal systems, and systems with $\VMO_x$ coefficients.
We provide proofs of our main theorems in Section~\ref{sec:p} and some technical lemmas are proved in the appendix.

Finally, several remarks are in order.
In the scalar case, there are a few papers discussing Green's functions in non-cylindrical domains.
However, we believe that even in the scalar case, our results give still new perspectives on Green's functions.
In \cite{Ny97}, Nystr\"om constructed Green's functions in bounded time-varying $H_1$ domains utilizing the fundamental solutions and the caloric measures, and in doing so, he made a qualitative assumption that the coefficients are smooth in order to have well-defined concept of solutions; i.e. to assume that all solutions are classical ones.
The main drawback of this kind of approach is that it is not well suited to handle unbounded domains, especially domains with unbounded cross-sections such as the graph domains considered by Hofmann and Lewis \cite{HL96}.
The novelty of our paper lies in presenting a powerful unifying method that establishes the existence and various estimates for the Green's function of parabolic equations and systems with $L_\infty$ coefficients in time-varying $H_1$ domains including the graph domains.
Also, even though we impose some conditions on the operator $\sL$ in the vectorial case, we do not make any smoothness assumption on its coefficients in order to assume that the solutions of the system are classical.
Moreover, the treatment of $L_\infty$-perturbation of diagonal systems is a unique feature of our paper and we believe that it could find some interesting applications in the complex perturbation theory for the Dirichlet problem of second order parabolic equations in time-varying domains.

%------------------------------------------------------------------------------%
\mysection{Notation and Definitions} 					\label{sec:nd}
%------------------------------------------------------------------------------%
\subsection{Basic notation}
We mostly follow notation employed in Ladyzhenskaya et al. \cite{LSU}, supplemented by that used in Lieberman \cite{Lieberman}.
First we use $X=(t,x)=(t,x^1,\ldots,x^d)$ to denote a point in $\bR^{d+1}$ with $d\geq 1$ and we denote $X'=(t,x')=(t,x^1,\ldots, x^{d-1})\in \bR^d$ so that $X=(X',x^d)$. We also write $Y=(s,y)=(s,y',y^d)=(Y',y^d)$.
We denote
\[
a\wedge b=\min(a,b),\quad a\vee b=\max(a,b)\quad\text{for }\; a,b\in[-\infty, \infty].
\]
We define the parabolic distance in $\bR^{d+1}$ and $\bR^d$, respectively, by
\[
\abs{X-Y}_{\sP}=\sqrt{\abs{t-s}} \vee \abs{x-y},\quad \abs{X'-Y'}_{\sP}=\sqrt{\abs{t-s}} \vee \abs{x'-y'},
\]
where $\abs{\,\cdot\,}$ denotes the usual Euclidean norm, and write $\abs{X}_{\sP}=\abs{X-0}_{\sP}$.
We define the parabolic H\"older norm as follows:
\[
\abs{u}_{\mu/2,\mu;\Omega}=[u]_{\mu/2, \mu;Q}+\abs{u}_{0;\Omega}: = \sup_{\substack{X, Y \in \Omega\\ X\neq Y}} \frac{\abs{u(X)-u(Y)}}{\abs{X-Y}_\sP^ \mu} + \sup_{X\in \Omega}\,\abs{u(X)},\quad \mu\in(0,1].
\]
By $C^{\mu/2,\mu}(\Omega)$ we denote the set of all bounded measurable functions $u$ on $\Omega$ for which $\abs{u}_{\mu/2,\mu;\Omega}$ is finite.
We write $D_i u=D_{x^i} u=\partial u/\partial x^i$ and $u_t=\partial u /\partial t$.
We also write $Du=D_x u$ for the vector $(D_1 u,\ldots, D_d u)$.
We use the following notation for basic cylinders in $\bR^{d+1}$:
\begin{gather*}
Q(X_0,R)=\set{X\in \bR^{d+1}\colon\,  \abs{X-X_0}_{\sP}<R},\\
Q_{-}(X_0,R)=\set{X=(t,x)\in \bR^{d+1}\colon\, \abs{X-X_0}_{\sP}<R,\, t<t_0},\\
Q_{+}(X_0,R)=\set{X=(t,x)\in \bR^{d+1}\colon\, \abs{X-X_0}_{\sP}<R,\, t>t_0}.
\end{gather*}
We also use the ball $B(x_0,r)=\set{x\in \bR^d \colon\, \abs{x-x_0}<r}$.
For convenience, the parameter $X_0$ (or $x_0$) in the notation above is omitted if $X_0=0$ (or $x_0=0$, respectively).
We use $\Omega$ to denote a domain (open connected set) in $\bR^{d+1}$.
For a fixed number $t_0$, we write $\omega(t_0)$ for the set of all points $(t_0,x)$ in $\Omega$, and write $I(\Omega)$ for the set of all $t$ such that $\omega(t)$ is not empty.
For $-\infty \leq t_0 < t_1 \leq \infty$, we denote
\[
\Omega(t_0,t_1)=\set{X=(t,x)\in \Omega\colon\, t_0<t<t_1}.
\]
The parabolic boundary $\sP \Omega$ is defined to be the set of all points $X_0\in \partial\Omega$ such that for any $\epsilon>0$, the cylinder $Q_{-}(X_0, \epsilon)$ contains points not in $\Omega$.
We define $B\Omega$ to be the set of all points $X_0\in \sP \Omega$ such that there is a positive $R$ with $Q_{+}(X_0,R)\subset \Omega$ and $S\Omega= \sP\Omega \setminus {B\Omega}$.
We define the ``time-reversed" parabolic boundary $\widetilde\sP \Omega$ to be the set of all points $X_0\in \partial\Omega$ such that for any $\epsilon>0$, the cylinder $Q_{+}(X_0, \epsilon)$ contains points not in $\Omega$.
We also define
\[
\Omega[X,R] =  \Omega\cap Q(X,R),\quad \sP\Omega[X,R]=\sP\Omega \cap Q(X,R),\quad \widetilde \sP\Omega[X,R]=\widetilde \sP\Omega \cap Q(X,R),
\]
and similarly $\Omega_{\pm}[X,R]$, $\sP\Omega_{\pm}[X,R]$, and $\widetilde \sP \Omega_{\pm}[X,R]$.
Finally, we define distance functions
\begin{gather*}
d_\Omega(X)=d(X)=\inf \set{\abs{Y-X}_{\sP}\colon\, Y\in \partial \Omega},\\
d_\Omega^{-}(X)=d^{-}(X)=\inf \set{\abs{Y-X}_{\sP}\colon\, Y\in \sP \Omega,\, s \leq t},\\
d_\Omega^{+}(X)=d^{+}(X)=\inf \set{\abs{Y-X}_{\sP}\colon\, Y\in \widetilde \sP \Omega,\, s \geq t}.
\end{gather*}

\subsection{Strongly parabolic systems}					\label{sec:sps}
Let the operator $\sL$ be defined as in \eqref{eq0.1}.
We assume that the coefficient of $\sL$ are defined in the whole space $\bR^{d+1}$ in a measurable way and that the principal coefficients $\vec{A}^{\alpha\beta}$ with the components $A^{\alpha\beta}_{ij}$ satisfy the strong ellipticity
\begin{equation}    \label{eqP-02}
\sum_{i,j=1}^m \sum_{\alpha,\beta=1}^d A^{\alpha\beta}_{ij}(X)\xi^j_\beta \xi^i_\alpha \geq \nu\sum_{i=1}^m \sum_{\alpha=1}^d \bigabs{\xi^i_\alpha}^2=:\nu \bigabs{\vec \xi}^2, \quad\forall \vec\xi \in \bR^{dm},\quad\forall X\in\bR^{d+1},
\end{equation}
and the uniform boundedness condition
\begin{equation}    \label{eqP-03}
\sum_{i,j=1}^m \sum_{\alpha,\beta=1}^d \Bigabs{A^{\alpha\beta}_{ij}(X)}^2\le \nu^{-2},\quad\forall X\in\bR^{d+1},
\end{equation}
for some constant $\nu\in (0,1]$.
The adjoint operator $\sLt$ is defined by
\[\sLt \vec u = -\vec u_t-D_\alpha \bigl(\widetilde{\vec A}{}^{\alpha\beta} D_\beta \vec u\bigr),\]
where $\widetilde{\vec A}{}^{\alpha\beta}=\bigl(\vec A^{\beta\alpha}\bigr)^T$; i.e., $\tilde A{}^{\alpha\beta}_{ij}=A^{\beta\alpha}_{ji}$.
Notice that the coefficients $\tilde A{}^{\alpha\beta}_{ij}$ satisfy the conditions \eqref{eqP-02} and \eqref{eqP-03} with the same constant $\nu$.

\subsection{Time-varying $H_1$ domain}
We shall say that $\Omega$ is a time-varying $H_1$ graph domain in $\bR^{d+1}$ if there is a function $f=f(X')=f(t,x')$ from $\bR^d$ to $\bR$ satisfying
\begin{equation}					\label{eq2.01dh}
\abs{f(X')-f(Y')}\leq M\abs{X'-Y'}_{\sP},\quad \forall X',Y'\in \bR^d,
\end{equation}
for some constant $M>0$ so that $\Omega$ is represented by
\[
\Omega=\set{X=(X',x^d)\in \bR^{d+1}\colon\, x^d>f(X')}.
\]
We shall say that $\Omega$ is a time-varying $H_1$ domain in $\bR^{d+1}$ if 
\begin{enumerate}[i)]
\item
$I(\Omega)=\bR$ and $\omega(t)$ is a bounded domain in $\bR^d$ for all $t\in \bR$.
\item
There are constants $M$ and $R_a>0$ such that for each $X_0\in \partial\Omega$, there is a function $f=f(X')=f(t,x')$ satisfying \eqref{eq2.01dh}, for which (after a suitable rotation of $x$-axes)
\[
\Omega \cap Q(X_0,R_a)=\set{X \in Q(X_0,R_a)\colon\,  x^d> f(X')}.
\]
\end{enumerate}

\subsection{Function spaces}
For $q\geq 1$, we let $L_q(\Omega)$ denote the classical Banach space consisting of measurable functions on $\Omega$ that are $q$-integrable.
The space $W^{0,1}_q(\Omega)$ denotes the set of functions $u\in L_q(\Omega)$ with its weak derivative $D u \in L_q(\Omega)$ having a finite norm
\[
\norm{u}_{W^{0,1}_q(\Omega)}=\norm{u}_{L_q(\Omega)}+\norm{Du}_{L_q(\Omega)}.
\]
We denote by $W^{1,1}_2(\Omega)$ the Hilbert space with the inner product
\[
\ip{u,v}_{W^{1,1}_2(\Omega)}:=\int_{\Omega} uv+\sum_{\alpha=1}^d \int_{\Omega} D_\alpha u D_\alpha v+\int_{\Omega} u_t v_t.
\]
We define $V_2(\Omega)$ as the set of all $u\in W^{0,1}_2(\Omega)$ having a finite norm $\norm{u}_{V_2(\Omega)}$ defined by
\[
\norm{u}_{V_2(\Omega)}^2 :=\int_\Omega \abs{Du}^2 \,dX + \esssup_{t\in I(\Omega)} \int_{\omega(t)} u^2 \,dx.
\]
The space $V^{0,1}_2(\Omega)$ is obtained by completing the set $W^{1,1}_2(\Omega)$ in the norm of $V_2(\Omega)$.
Let $\varSigma \subset \overline \Omega$ and $u$ be a $V^{0,1}_2(\Omega)$ function.
We say that $u$ vanishes (or write $u=0$) on $\varSigma$ if $u$ is a limit in $V^{0,1}_2(\Omega)$ of a sequence of functions in $C^\infty_c(\overline \Omega\setminus \varSigma)$.
We define $\rV^{0,1}_2(\Omega)$ to be the set of all functions $u$ in $V^{0,1}_2(\Omega)$ that vanishes on $S\Omega$.

\subsection{Weak Solutions}
For $\vec f, \vec g_\alpha \in L_{1,loc}(\Omega)^m$ ($\alpha=1,\ldots,d$), we say that $\vec u$ is a weak solution of $\sL \vec u=\vec f+ D_\alpha\vec g_\alpha$ in $\Omega$ if $\vec u \in V_2(\Omega)^m$ and satisfies
\begin{equation}					\label{eqn:E-71}
-\int_{\Omega} u^i\phi^i_t+ \int_{\Omega} A^{\alpha\beta}_{ij}D_\beta u^j D_\alpha\phi^i= \int_{\Omega} f^i \phi^i- \int_{\Omega} g^i_\alpha D_\alpha\phi^i, \quad\forall \vec \phi \in C^\infty_c (\Omega)^m.
\end{equation}
We say that $\vec u$ is a weak solution of $\sLt \vec u=\vec f+D_\alpha\vec g_\alpha$ in $\Omega$ if $\vec u\in V_2(\Omega)^m$ and satisfies
\begin{equation}					\label{eqn:E-71b}
\int_{\Omega} u^i\phi^i_t+ \int_{\Omega}\tilde A{}^{\alpha\beta}_{ij}D_\beta u^j D_\alpha\phi^i=
\int_{\Omega} f^i \phi^i- \int_{\Omega} g^i_\alpha D_\alpha\phi^i, \quad\forall \vec\phi \in C^\infty_c (\Omega)^m.
\end{equation}
For $\vec\psi_0=\vec \psi_0(x) \in L_{1,loc}(\omega(t_0))^m$, we say that $\vec u$ is a weak solution of the problem
\[
\sL \vec u =\vec f + D_\alpha \vec g_\alpha\;\text{ in }\;\Omega(t_0,t_1),\quad \vec u=0\;\text{ on }\; S\Omega(t_0,t_1),\quad \vec u=\vec\psi_0\;\text{ on }\;\omega(t_0)
\]
if $\vec u \in \rV_2^{0,1}(\Omega(t_0,t_1))$ and satisfies for all $\tau\in I(\Omega(t_0,t_1))$ the identity
\begin{multline*}
\int_{\omega(\tau)} u^i \phi^i\,dx - \int_{\Omega(t_0,\tau)} u^i\phi^i_t\,dX+\int_ {\Omega(t_0,\tau)} A^{\alpha\beta}_{ij}D_\beta u^j D_\alpha\phi^i\,dX=\int_ {\Omega(t_0,\tau)} f^i\phi^i\,dX\\
- \int_{\Omega(t_0,\tau)} g^i_\alpha D_\alpha\phi^i\,dX+\int_{\omega(t_0)} \psi_0^i \phi^i\,dx,\quad\forall \vec \phi \in C^\infty_c(\overline{\Omega(t_0,t_1)}\setminus S\Omega(t_0,t_1))^m.
\end{multline*}
Similarly, for $\vec\psi_0=\vec \psi_0(x) \in L_{1,loc}(\omega(t_1))^m$, we say that $\vec u$ is a weak solution of the problem
\[
\sLt \vec u =\vec f + D_\alpha \vec g_\alpha\;\text{ in }\;\Omega(t_0,t_1),\quad \vec u=0\;\text{ on }\; S\Omega(t_0,t_1),\quad \vec u=\vec\psi_0\;\text{ on }\;\omega(t_1)
\]
if $\vec u \in \rV_2^{0,1}(\Omega(t_0,t_1))$ and satisfies for all $\tau\in I(\Omega(t_0,t_1))$ the identity
\begin{multline*}
\int_{\omega(\tau)} u^i \phi^i\,dx + \int_{\Omega(\tau,t_1)} u^i\phi^i_t\,dX+\int_ {\Omega(\tau,t_1)} \tilde A{}^{\alpha\beta}_{ij}D_\beta u^j D_\alpha\phi^i\,dX=\int_ {\Omega(\tau,t_1)} f^i\phi^i\,dX\\
- \int_{\Omega(\tau,t_1)} g^i_\alpha D_\alpha\phi^i\,dX+\int_{\omega(t_1)} \psi_0^i \phi^i\,dx,\quad\forall \vec \phi\in C^\infty_c(\overline{\Omega(t_0,t_1)}\setminus S\Omega(t_0,t_1))^m.
\end{multline*}

\subsection{Green's function}					\label{sec:dgf}
Let $\Omega$ be a time-varying $H_1$ (graph) domain in $\bR^{d+1}$.
We say that an $m\times m$ matrix valued function $\vec G(X,Y)=\vec G(t,x,s,y)$, with entries $G_{ij} (X,Y)$ defined on the set $\bigset{(X,Y)\in\Omega\times\Omega: X\neq Y}$, is a Green's function of $\sL$ in $\Omega$ if it satisfies the following properties:
\begin{enumerate}[i)]
\item
$\vec G(\cdot,Y)\in W^{0,1}_{1,loc}(\Omega)$ and $\sL \vec G(\cdot,Y) = \delta_Y I_m$ for all $Y\in \Omega$, in the sense that
\[
\int_{\Omega}\left( -G_{ik}(\cdot,Y)\phi^i_t+ A^{\alpha\beta}_{ij} D_\beta G_{jk}(\cdot,Y)D_\alpha \phi^i\right) = \phi^k(Y), \quad \forall \vec \phi \in C^\infty_c(\Omega)^m.
\]
\item
$\vec G(\cdot,Y) \in V_2^{0,1}(\Omega\setminus Q(Y,R))$ for all $Y\in\Omega$ and $R>0$, and $\vec G(\cdot,Y)$ vanishes on $S\Omega$.
\item
For any $\vec f=(f^1,\ldots, f^m)^T \in C^\infty_c(\Omega)$, the function $\vec u$ given by
\[
\vec u(X):=\int_\Omega \vec G(Y,X) \vec f(Y)\,dY
\]
belongs to $\rV^{0,1}_2(\Omega)$ and satisfies $\sLt \vec u=\vec f$ in the sense of \eqref{eqn:E-71b}.
\end{enumerate}
Similarly, we say that an $m\times m$ matrix valued function $\tilde{\vec G}(X,Y)=\tilde{\vec G}(t,x,s,y)$ is a Green's function of $\sLt$ in $\Omega$ if it satisfies the following properties:
\begin{enumerate}[i)]
\item
$\tilde{\vec G}(\cdot,Y)\in W^{0,1}_{1,loc}(\Omega)$ and $\sLt \tilde{\vec G}(\cdot,Y) = \delta_Y I_m$ for all $Y\in \Omega$, in the sense that
\[
\int_{\Omega}\left(\tilde G_{ik}(\cdot,Y)\phi^i_t+ \tilde A^{\alpha\beta}_{ij} D_\beta \tilde G_{jk}(\cdot,Y)D_\alpha \phi^i\right) = \phi^k(Y), \quad \forall \vec \phi \in C^\infty_c(\Omega)^m.
\]
\item
$\tilde{\vec G}(\cdot,Y) \in V_2^{0,1}(\Omega\setminus Q(Y,R))$ for all $Y\in\Omega$ and $R>0$, and $\tilde{\vec G}(\cdot,Y)$ vanishes on $S\Omega$.
\item
For any $\vec f=(f^1,\ldots, f^m)^T \in C^\infty_c(\Omega)$, the function $\vec u$ given by
\[
\vec u(X):=\int_\Omega \tilde{\vec G}(Y,X) \vec f(Y)\,dY
\]
belongs to $\rV^{0,1}_2(\Omega)$ and satisfies $\sL \vec u=\vec f$ in the sense of \eqref{eqn:E-71}.
\end{enumerate}
We remark that part iii) of the above definitions combined with the uniqueness of weak solutions of $\sLt \vec u =\vec f$ and $\sL \vec u =\vec f$ in $\rV_2^{0,1}(\Omega)$ for any $\vec f\in C^\infty_c(\Omega)$ gives uniqueness of Green's functions; see \cite[\S 3.6]{CDK} and \cite{BHL}.

%------------------------------------------------------------------------------%
\mysection{Main results} \label{main}
%------------------------------------------------------------------------------%
The following condition $\IH$ means that weak solutions of $\sL\vec u=0$ and $\sLt \vec u=0$ enjoy interior H\"older continuity estimates with an exponent $\mu_0$.
It is not hard to see that this condition is equivalent to saying that the operator $\sL$ and its adjoint $\sLt$ satisfy the property $\PH$ in \cite{CDK}; see \cite[Lemma~8.2]{CDK2} for the proof.

\begin{CIH}
There exist constants $\mu_0\in (0,1]$, $R_c \in (0,\infty]$, and  $C_0>0$ such that for all $X\in\Omega$ the following holds:
\begin{enumerate}[i)]
\item
If  $\vec u$ is a weak solution of $\sL \vec u=0$ in $Q_{-}(X,R)$, where $R<R_c\wedge d^{-}(X)$, then we have
\[
[\vec u]_{\mu_0/2,\mu_0;Q_{-}(X,R/2)}\leq C_0 R^{-\mu_0}\left(\fint_{Q_{-}(X,R)} \abs{\vec u}^2\right)^{1/2}.
\]
\item
If  $\vec u$ is a weak solution of $\sLt \vec u=0$ in $Q_{+}(X,R)$, where $R<R_c\wedge d^{+}(X)$, then we have
\[
[\vec u]_{\mu_0/2,\mu_0;Q_{+}(X,R/2)}\leq C_0 R^{-\mu_0}\left(\fint_{Q_{+}(X,R)} \abs{\vec u}^2\right)^{1/2}.
\]
\end{enumerate}
\end{CIH}

By assuming the condition $\IH$, we construct the Green's function of $\sL$ in time-varying $H_1$ domains and the domains above time-varying $H_1$ graph in $\bR^{d+1}$.

\begin{theorem}	\label{thm1}
Let $\Omega$ be a time-varying $H_1$ (graph) domain in $\bR^{d+1}$.
Assume the condition $\IH$.
Then there exists a unique Green's function $\vec G(X,Y)=\vec G(t,x,s,y)$ of $\sL$ in $\Omega$.
We have $\vec G(\cdot,Y) \in C^{\mu_0/2,\mu_0}_{loc}(\Omega\setminus \set{Y})$ for all $Y\in \Omega$ and 
\begin{equation}					\label{eq3.00ow}
\vec G(\cdot,Y)\equiv 0\quad\text{on }\;\Omega(-\infty,s).
\end{equation}
Also, there exists a unique Green's function $\tilde{\vec G}(X,Y)$ of $\sLt$ in $\Omega$, which satisfies
\begin{equation}					\label{eq3.02wb}
\tilde{\vec G}(\cdot,Y)\equiv 0\quad\text{on }\;\Omega(s,\infty)
\end{equation}
and  $\tilde{\vec G}(\cdot,Y) \in C^{\mu_0/2,\mu_0}_{loc}(\Omega\setminus \set{Y})$ for all $Y\in\Omega$.
In addition, we have the following the identity
\begin{equation}					\label{eq3.01mq}
\tilde{\vec G}(X,Y):=\vec G(Y,X)^T,\quad \forall X, Y \in \Omega,\;\; X\neq Y.
\end{equation}
Moreover, for any $\vec \psi_0\in L_2(\omega(s_0))^m$, the function $\vec u(t,x)$ given by
\begin{equation}  \label{eq5.24}
\vec{u}(t,x)=\int_{\omega(s_0)} \vec G(t,x,s_0,y)\vec \psi_0(y)\,dy, \quad \forall X=(t,x)\in\Omega(s_0,\infty),
\end{equation}
is a unique weak solution of the problem
\begin{equation} \label{eq5.88}
\sL \vec u=0\;\text{ in }\;\Omega(s_0,\infty),\quad \vec u=0\;\text{ on }\; S\Omega(s_0,\infty),\quad \vec u=\vec\psi_0\;\text{ on }\;\omega(s_0)
\end{equation}
and if $\vec \psi_0$ is continuous at $x_0\in\omega(s_0)$ in addition, then
\begin{equation}					\label{eq5.23}
\lim_{\substack{(t,x)\to (s_0,x_0)\\X\in\Omega(s_0,\infty)}} \int_{\omega(s_0)}\vec G(t,x,s_0,y)\vec \psi_0(y)\,dy =\vec \psi_0(x_0).
\end{equation}
Furthermore, the following estimates hold for $\vec G$, where we denote $d_Y'=d(Y)\wedge R_c$:
\begin{enumerate}[i)]
\item
$\norm{\vec G(\cdot,Y)}_{L_{2+4/d}(\Omega\setminus \overline Q(Y,R))}+ \norm{\vec G(\cdot,Y)}_{V_2(\Omega\setminus \overline Q(Y,R))} \leq N R^{-d/2}\;$ for all $R<d_Y'$ and $Y\in\Omega$.
\item
$\norm{\vec G(\cdot,Y)}_{L_p(\Omega[Y,R])} \leq N R^{-d+(d+2)/p}\;$ for all $r<d_Y'$,   $Y\in\Omega$, and $p \in \bigl[1,\frac{d+2}{d}\bigr)$.
\item
$\bigabs{\set{X\in \Omega\colon\, \abs{\vec G(X,Y)}>\tau}} \leq N \tau^{-(d+2)/d}\;$ for all $\tau>(d_Y'/2)^{-d}$ and $Y\in\Omega$.
\item
$\norm{D\vec G(\cdot,Y)}_{L_p(\Omega[Y,R])} \leq N R^{-d-1+(d+2)/p}\;$ for all $r<d_Y'$, $Y\in\Omega$, and $p\in \bigl[1,\frac{d+2}{d+1}\bigr)$ .
\item
$\bigabs{\set{X\in \Omega\colon\,\abs{D_x\vec G(X,Y)}>\tau}} \leq N \tau^{-(d+2)/(d+1)}\;$ for all $\tau>(d_Y'/2)^{-d}$ and $Y\in\Omega$.
\item
$\abs{\vec G(X,Y)}\leq C \abs{X-Y}_{\sP}^{-d}\;$ whenever $0<\abs{X-Y}_{\sP}<d_Y'/2$ and $X, Y\in\Omega$.
\item
$\abs{\vec G(X,Y)-\vec G(X',Y)}\leq C\abs{X-X'}_{\sP}^{\mu_0} \abs{X-Y}_{\sP}^{-d-\mu_0}\;$ whenever $2\abs{X-X'}_{\sP}<\abs{X-Y}_{\sP}<d_Y'/2$ and $X, X', Y\in\Omega$.
\end{enumerate}
In the above, $N=N(d,m,\nu,\mu_0,C_0)$ and $N$ depends on $p$ as well in ii) and iv).
The estimates i) -- vii) are also valid for the Green's function $\tilde{\vec G}$ of the adjoint operator $\sLt$ in $\Omega$.
\end{theorem}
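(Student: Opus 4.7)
The plan is to adapt the approximation-and-limit construction of \cite{CDK} to the time-varying setting, with the solvability theory of \cite{BHL} for initial-boundary value problems in time-varying $H_1$ domains replacing the Ladyzhenskaya--Ural'tseva machinery available only for cylindrical domains, and with $\IH$ playing the role that $\PH$ plays in \cite{CDK}. For each $Y=(s,y)\in\Omega$ and small $\epsilon>0$, I would let $\vec G_\epsilon(\cdot,Y)\in\rV^{0,1}_2(\Omega(s,\infty))$ be the unique weak solution of
\[
\sL\vec u=0\;\text{in}\;\Omega(s,\infty),\quad \vec u=0\;\text{on}\;S\Omega(s,\infty),\quad \vec u(s,\cdot)=\abs{B(y,\epsilon)}^{-1}\chi_{B(y,\epsilon)}I_m,
\]
furnished by \cite{BHL}, extended by zero to $\Omega(-\infty,s)$. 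Likewise, for $X=(t,x)\in\Omega$ and small $\epsilon'>0$, I would define $\tilde{\vec G}_{\epsilon'}(\cdot,X)$ by solving the corresponding backward-in-time problem for $\sLt$ on $\Omega(-\infty,t)$ with initial datum $\abs{B(x,\epsilon')}^{-1}\chi_{B(x,\epsilon')}I_m$ on $\omega(t)$, extended by zero to $\Omega(t,\infty)$.

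The crux of the argument is the uniform pointwise bound
\[
\abs{\vec G_\epsilon(X,Y)}\leq N\abs{X-Y}_\sP^{-d}\quad\text{whenever}\;0<\abs{X-Y}_\sP<d(Y)/4.
\]
For $R\asymp\abs{X-Y}_\sP$ with $R<d(Y)/2$, the function $\vec G_\epsilon(\cdot,Y)$ is a weak solution of $\sL\vec u=0$ on $Q_-(X,R)$, and $\IH$ together with averaging yields
\[
\abs{\vec G_\epsilon(X,Y)}\leq N\Bigl(\fint_{Q_-(X,R)}\abs{\vec G_\epsilon(\cdot,Y)}^2\Bigr)^{1/2}.
\]
The right-hand side is then controlled by $NR^{-d}$ via a combination of Caccioppoli-type energy estimates on cylinders separated from $Y$ (exploiting the vanishing of $\vec G_\epsilon(\cdot,Y)$ on $S\Omega(s,\infty)$ and throughout $\Omega(-\infty,s)$) together with a duality argument against the adjoint approximations $\tilde{\vec G}_{\epsilon'}$, for which the analogous averaged bound follows by the same reasoning via $\IH$ applied to $\sLt$. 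An entirely symmetric argument gives the same pointwise bound for $\tilde{\vec G}_{\epsilon'}$.

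Given the pointwise bound, the remaining assertions follow by standard limiting arguments. Testing the defining identity for $\vec G_\epsilon(\cdot,Y)$ against $\tilde{\vec G}_{\epsilon'}(\cdot,X)$ and conversely yields a mollified duality identity whose limit as $\epsilon,\epsilon'\to 0$ produces the symmetry \eqref{eq3.01mq}; convergence is granted by the local equicontinuity of the approximations on compact subsets of $\Omega\setminus\{Y\}$ (resp.\ $\Omega\setminus\{X\}$) furnished by $\IH$ and the uniform pointwise bound. A diagonal extraction produces the limits $\vec G$ and $\tilde{\vec G}$ with the required H\"older regularity, and the defining properties of Section~\ref{sec:dgf} are verified by passing to the limit in the weak formulations, using \cite{BHL} for property iii); the vanishing statements \eqref{eq3.00ow} and \eqref{eq3.02wb} are inherited from the construction. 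For \eqref{eq5.24}, note that for $\vec\psi_0\in C_c(\omega(s_0))^m$ the function $\vec u_\epsilon(t,x)=\int_{\omega(s_0)}\vec G_\epsilon(t,x,s_0,y)\vec\psi_0(y)\,dy$ is the \cite{BHL} solution of \eqref{eq5.88} with mollified initial datum $\vec\psi_0*\rho_\epsilon$; passing to the limit and using density give \eqref{eq5.24} for $\vec\psi_0\in L_2(\omega(s_0))^m$, and the approximate-identity character of $\rho_\epsilon$ yields \eqref{eq5.23}. Estimates i)--v) follow from vi) together with $L_2$ energy bounds by the weak-type/Lorentz arguments of \cite{CDK, HK07}, while vii) is a direct consequence of applying $\IH$ to vi).

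The principal obstacle is the pointwise bound vi) for the approximations: every other assertion reduces routinely to it once it is in hand. Securing vi) requires a careful bootstrap of energy, Caccioppoli, and duality estimates inside a domain with time-varying lateral boundary, on which only $\IH$ and the \cite{BHL} solvability theory are available in place of the cylindrical-domain tools used in \cite{CDK}; tracking the various scales against $\abs{X-Y}_\sP$ and $d(Y)$ so that $\IH$ applies on every cylinder invoked is the main technical effort.
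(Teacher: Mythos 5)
Your overall architecture is the same as the paper's: approximate Green's functions built from the Brown--Hu--Lieberman solvability theory, a key pointwise bound $\abs{\vec G_\epsilon(X,Y)}\leq N\abs{X-Y}_{\sP}^{-d}$ obtained from the condition $\IH$ plus duality, a compactness/diagonal limit giving $\vec G$ and $\tilde{\vec G}$, symmetry \eqref{eq3.01mq} from a mollified duality identity, and the representation \eqref{eq5.24} by pairing with the adjoint approximations. Mollifying the Dirac mass as an initial datum on $\omega(s)$ rather than as a source term (the paper uses $\sL\vec v=\Phi_\epsilon\vec e_k$) is a harmless variant. One caution on the key bound: dualizing ``against the adjoint approximations $\tilde{\vec G}_{\epsilon'}$'' risks circularity; the paper instead pairs $\vec G^\epsilon(\cdot,Y)$ with adjoint solutions of $\sLt\vec u=\vec f$ for arbitrary smooth $\vec f$ supported in $Q_{+}(X_0,R)$, uses $\IH$ to get the local maximum estimate \eqref{eq2.17} for those solutions, deduces $L_q$ bounds on $\vec G^\epsilon(\cdot,Y)$ by duality, and only then upgrades to the pointwise bound via interior local boundedness. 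Your sketch points in this direction but leaves exactly this step (which you yourself call the main technical effort) unproved; as a plan it is acceptable, but it is the heart of the matter.

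The genuine gap is \eqref{eq5.23}. You assert it follows from ``the approximate-identity character of $\rho_\epsilon$,'' but $\rho_\epsilon$ is only the mollifier used in your construction; \eqref{eq5.23} is a statement about the kernel $\vec G(t,x,s_0,\cdot)$ itself concentrating at $x_0$ with total mass tending to $I_m$ as $t\downarrow s_0$, for data $\vec\psi_0$ that are merely $L_2$ and continuous at the single point $x_0$. For systems there is no maximum principle, so this does not follow from soft arguments: the paper needs a Gaffney-type off-diagonal $L_2$ decay estimate (Lemma~\ref{lem4.4}, proved via an exponential-weight differential inequality based on Lemma~\ref{lem4.3}), a short-time $L_\infty$ bound for bounded compactly supported data obtained by decomposing into dyadic annuli and combining the decay estimate with $\IH$ (Lemma~\ref{lem4.5}), and, crucially, the normalization $\lim_{t\to s_0}\int_{\omega(s_0)}\vec G(t,x,s_0,y)\eta(y)\,dy=I_m$ (Lemma~\ref{lem4.6}), which is proved from the weak identity \eqref{eq3.431p} for the adjoint Green's function together with the gradient bound in estimate \textit{i)}. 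The far-field part of a general $L_2$ datum cannot be handled without the off-diagonal decay, and the convergence of the near-field part to $\vec\psi_0(x_0)$ cannot be obtained without the mass normalization; none of these ingredients appear in your proposal, so as written the proof of \eqref{eq5.23} is missing rather than merely compressed.
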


\begin{remark}					\label{rmk2.6}
In the condition $\IH$, the constant $R_c$ is interchangeable with $a R_c$ for any fixed $a \in (0,\infty)$, possibly at the cost of increasing the constant $C_0$.
Also, the condition $\IH$ implies that if $\vec u$ is a weak solution of $\sL \vec u=0$ in $Q_{-}(X_0,R)$ with $R<d^{-}(Y)\wedge R_c$, then we have the $L_\infty$ estimate
\begin{equation}					\label{eq3.10np}
\norm{\vec u}_{L_\infty(Q_{-}(X_0,R/4))} \leq N \left(\fint_{Q_{-}(X_0,R)} \abs{\vec u}^2\right)^{1/2},
\end{equation}
where $N=N(d,m,\nu,\mu_0, C_0)>0$.
Moreover, $\vec u$ satisfy
\[
\norm{\vec u}_{L_\infty(Q_{-}(X_0,r))} \leq N  (R-r)^{-(d+2)/p} \norm{\vec u}_{L_p(Q_{-}(X_0,R))},\quad \forall r <R, \;\; \forall p>0, 
\]
where $N=N(d,m,\nu,\mu_0, C_0,p)>0$.
See \cite[Lemma~2.6]{CDK} for the proof.
\end{remark}

\begin{remark}
In Theorem~\ref{thm1}, we also have the following estimates, which follow from the identity \eqref{eq3.01mq} and the estimates \textit{i) -- vi)} for $\tilde{\vec G}(\cdot, X)$:
\begin{enumerate}[i)]
\em
\item
\em
$\norm{\vec G(X,\cdot)}_{L_{2+4/d}(\Omega\setminus \overline Q(X,R))}+ \norm{\vec G(X,\cdot)}_{V_2(\Omega\setminus \overline Q(X,R))} \leq N R^{-d/2}\;$ for all $R<d_X'$ and $X\in\Omega$.
\em
\item
\em
$\norm{\vec G(X,\cdot)}_{L_p(\Omega[X,R])} \leq N R^{-d+(d+2)/p}\;$ for all $r<d_X'$,   $X\in\Omega$, and $p \in \bigl[1,\frac{d+2}{d}\bigr)$.
\em
\item
\em
$\bigabs{\set{Y\in \Omega\colon\,\abs{\vec G(X,Y)}>\tau}} \leq N \tau^{-(d+2)/d}\;$ for all $\tau>(d_X'/2)^{-d}$ and $X\in\Omega$.
\em
\item
\em
$\norm{D\vec G(X,\cdot)}_{L_p(\Omega[X,R])} \leq N R^{-d-1+(d+2)/p}\;$ for all $r<d_X'$, $X\in\Omega$, and $p\in \bigl[1,\frac{d+2}{d+1}\bigr)$ .
\em
\item
\em
$\bigabs{\set{Y\in \Omega\colon\,\abs{D_y\vec G(X,Y)}>\tau}} \leq N \tau^{-(d+2)/(d+1)}\;$ for all $\tau>(d_X'/2)^{-d}$ and $X\in\Omega$.
\em
\item
\em
$\abs{\vec G(X,Y)}\leq C \abs{X-Y}_{\sP}^{-d}\;$ whenever $0<\abs{X-Y}_{\sP}<d_X'/2$ and $X, Y\in\Omega$.
\em
\item
\em
$\abs{\vec G(X,Y)-\vec G(X,Y')}\leq C\abs{Y-Y'}_{\sP}^{\mu_0} \abs{X-Y}_{\sP}^{-d-\mu_0}\;$ whenever $2\abs{Y-Y'}_{\sP}<\abs{X-Y}_{\sP}<d_X'/2$ and $X, Y, Y'\in\Omega$.
\end{enumerate}
In particular, $\abs{\vec G(X,Y)}\leq N \abs{X-Y}_{\sP}^{-d}$
whenever $0<\abs{X-Y}_{\sP}<\frac{1}{2}(d(X) \vee d(Y))\wedge R_c$.
\end{remark}

The following condition $\LB$ is used to obtain a global Gaussian bound for the Green's function $\vec G(X,Y)$ in a time-varying $H_1$ domain $\Omega\subset \bR^{d+1}$.

\begin{CLB}
There exist constants $R_{max}\in (0,\infty]$ and $N_0>0$ so that for all $X\in\Omega$ and $0<R<R_{max}$, the following holds.
\begin{enumerate}[i)]
\item
If $\vec u$ is a weak solution of $\sL \vec u=0$ in $\Omega_{-}[X,R]$ vanishing on $\sP \Omega_{-}[X,R]$, then we have
\[
\norm{\vec u}_{L_\infty(\Omega_{-}[X,R/2])} \leq N_0 R^{-(2+d)/2} \norm{\vec u}_{L_2(\Omega_{-}[X,R])}.
\]
\item
If $\vec u$ is a weak solution of $\sLt \vec u=0$ in $\Omega_{+}[X,R]$ vanishing on $\widetilde \sP\Omega_{+}[X,R]$, then we have
\[
\norm{\vec u}_{L_\infty(\Omega_{+}[X,R/2])} \leq N_0 R^{-(2+d)/2} \norm{\vec u}_{L_2(\Omega_{+}[X,R])}.
\]
\end{enumerate}
\end{CLB}

\begin{theorem}	\label{thm2}
Let $\Omega$ be a time-varying $H_1$ (graph) domain in $\bR^{d+1}$.
Assume the condition $\LB$ as well as the condition $\IH$.
Then the Green's function $\vec G(X,Y)$ of $\sL$ in $\Omega$ exists and satisfies the conclusions of Theorem~\ref{thm1}.
Moreover, for all $X=(t,x)$ and $Y=(s,y) $ in $\Omega$ with $t>s$, we have
\begin{equation} \label{eq2.17d}
\abs{\vec G(t,x,s,y)}\leq N \left\{(t-s)\wedge R_{max}^2\right\}^{-d/2}\exp\left\{-\kappa \abs{x-y}^2/(t-s)\right\},
\end{equation}
where $N= N(d,m,\nu, N_0)$ and $\kappa=\kappa(\nu)>0$.
\end{theorem}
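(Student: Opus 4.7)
The plan is to combine an on-diagonal ultracontractivity estimate—obtained from the standard parabolic energy estimate together with Condition $\LB$—with the Davies exponential perturbation method to extract the Gaussian decay. The three regimes singled out in the statement ($\abs{x-y}\leq \sqrt{t-s}<R_{max}$; $\sqrt{t-s}<\abs{x-y}\wedge R_{max}$; and $R_{max}\leq \sqrt{t-s}$) correspond respectively to whether the Gaussian factor is trivial, comes from the interior Davies calculation, or comes from the large-distance asymptotics with a saturated prefactor.

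\textbf{Step 1 (on-diagonal bound).} Given $\vec \psi_0 \in L_2(\omega(s_0))^m$, let $\vec u$ be the weak solution of \eqref{eq5.88} furnished by Theorem~\ref{thm1}. The standard test-against-$\vec u$ argument gives the $L_2$ contraction $\norm{\vec u(t,\cdot)}_{L_2(\omega(t))}\leq \norm{\vec \psi_0}_{L_2(\omega(s_0))}$. Fix $X_0=(t_0,x_0)\in \Omega$ with $t_0>s_0$, let $R=\sqrt{t_0-s_0}\wedge R_{max}$, and observe that $\vec u$ vanishes on $\sP\Omega_{-}[X_0,R]=S\Omega \cap Q_{-}(X_0,R)$ by virtue of $\vec u\in \rV_2^{0,1}$. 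Condition $\LB$ therefore yields $\abs{\vec u(X_0)}\leq N R^{-(d+2)/2}\norm{\vec u}_{L_2(\Omega_{-}[X_0,R])}\leq N R^{-d/2}\norm{\vec\psi_0}_{L_2}$, i.e.\ an $L_2\to L_\infty$ bound for the solution operator $T(t_0,s_0)$ of norm $N\{(t_0-s_0)\wedge R_{max}^2\}^{-d/4}$. Using Condition $\LB$ ii) for $\sLt$ and duality produces an $L_1\to L_2$ bound of the same order, and composing at the midpoint $(t_0+s_0)/2$ via \eqref{eq5.24} and its adjoint analogue delivers the on-diagonal kernel bound $\abs{\vec G(t_0,x_0,s_0,y_0)}\leq N\{(t_0-s_0)\wedge R_{max}^2\}^{-d/2}$, which already settles the first regime.

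\textbf{Step 2 (Davies' exponential perturbation).} Fix a unit vector $e\in\bR^d$ and $\lambda\geq 0$, set $\psi(x)=\lambda e\cdot x$, and let $\vec v=e^{-\psi}\vec u$. A direct computation shows $\vec v$ satisfies a modification of $\sL\vec v=0$ by first- and zeroth-order perturbations whose coefficients are controlled by $C\lambda$ and $C\lambda^2$. Testing with $\vec v$ and using \eqref{eqP-02} gives
\[
\tfrac{d}{dt}\norm{\vec v(t,\cdot)}_{L_2}^2 + \nu \norm{D\vec v(t,\cdot)}_{L_2}^2 \leq C\lambda^2 \norm{\vec v(t,\cdot)}_{L_2}^2,
\]
so Gr\"onwall yields $\norm{\vec v(t,\cdot)}_{L_2}\leq e^{C\lambda^2(t-s_0)/2}\norm{\vec v(s_0,\cdot)}_{L_2}$. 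Since $\psi$ oscillates by at most $\lambda R$ on $Q(X_0,R)$, applying $\LB$ to $\vec u=e^\psi\vec v$ on $\Omega_{-}[X_0,R]$ and converting the resulting $L_2$ bound for $\vec u$ into one for $\vec v$ gives
\[
e^{-\psi(x_0)}\abs{\vec u(X_0)} \leq N R^{-d/2}\, e^{\lambda R + C\lambda^2(t_0-s_0)/2}\,\norm{e^{-\psi}\vec\psi_0}_{L_2}.
\]
Running the duality and midpoint-composition of Step 1 for the conjugated semigroup then produces the weighted kernel bound
\[
\abs{\vec G(t_0,x_0,s_0,y_0)} \leq N\{(t_0-s_0)\wedge R_{max}^2\}^{-d/2}\, e^{\psi(x_0)-\psi(y_0)+c\lambda R + C\lambda^2(t_0-s_0)}.
\]
Taking $e=-(x_0-y_0)/\abs{x_0-y_0}$ (so that $\psi(x_0)-\psi(y_0)=-\lambda\abs{x_0-y_0}$) and optimizing in $\lambda$ produces the desired exponent $-\kappa\abs{x_0-y_0}^2/(t_0-s_0)$ with $\kappa=\kappa(\nu)>0$.

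The main obstacle lies in the saturated regime $\sqrt{t_0-s_0}>R_{max}$: here $R$ is forced to equal $R_{max}$, and the Davies exponent acquires an unwanted linear term $\lambda R_{max}$. This is absorbed as follows: when $\abs{x-y}\gtrsim R_{max}$, the term $\lambda R_{max}$ is dominated by $-\lambda\abs{x-y}/2$ before optimizing $\lambda$, so the Gaussian decay survives; when $\abs{x-y}\lesssim R_{max}$ the Gaussian factor is bounded below by a positive constant and the on-diagonal bound of Step 1 already suffices. A secondary technical point is verifying that conjugation by $e^\psi$ introduces no boundary contributions in the weighted energy identity, which is automatic since $\vec v=e^{-\psi}\vec u$ inherits the vanishing on $S\Omega$ from $\vec u\in\rV_2^{0,1}$.
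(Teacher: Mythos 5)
Your proposal is, in substance, the paper's own proof of Theorem~\ref{thm2}: a weighted $L_2\to L_2$ Gr\"onwall estimate for the evolution, condition $\LB$ to upgrade it to $L_2\to L_\infty$ at the scale $R=\sqrt{t-s}\wedge R_{max}$, the corresponding estimate for the adjoint problem combined with the duality identity \eqref{eq4.02un} to get $L_1\to L_2$, composition at the midpoint time to get $L_1\to L_\infty$, and finally the weighted kernel bound \eqref{eq3.83} optimized in the weight parameter. The one point where your write-up is not rigorous as stated is the choice of weight $\psi(x)=\lambda e\cdot x$: this function is unbounded, and since $\Omega$ may be a time-varying $H_1$ graph domain whose cross-sections $\omega(t)$ are unbounded, the quantities $\norm{e^{-\psi}\vec u(t,\cdot)}_{L_2(\omega(t))}$ and $\norm{e^{-\psi}\vec\psi_0}_{L_2(\omega(s_0))}$ need not be finite, so the weighted energy identity, the Gr\"onwall step, and the duality step are not justified without a truncation or approximation argument. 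The paper avoids this by running the whole scheme with an arbitrary \emph{bounded} Lipschitz weight $\phi$ with $\abs{D\phi}\le K$ a.e., and only at the very end choosing the truncated linear function $\phi(z)=K\left(\abs{z-y}\wedge\abs{x-y}\right)$, which still gives $\phi(x)-\phi(y)=K\abs{x-y}$; if you make the same replacement, the rest of your computation goes through verbatim. A second, smaller remark: your three-case treatment of the saturated regime $\sqrt{t-s}>R_{max}$ is unnecessary, and your literal claim that $\lambda R_{max}$ is dominated by $-\lambda\abs{x-y}/2$ would require the constant $c$ in front of $\lambda R$ to be at most $1/2$, whereas the midpoint composition produces $c=2$. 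The uniform way out, which is how the paper concludes, is to use only the trivial bound $R\le\sqrt{t-s}$: after the choice $\lambda\sim\abs{x-y}/(t-s)$ the linear term is of order $\xi=\abs{x-y}/\sqrt{t-s}$ and is absorbed into the quadratic term $-\xi^2$ by Young's inequality at the cost of a multiplicative constant, which settles all three regimes at once.
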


\begin{remark}					\label{rmk3.15}
In the condition $\LB$, the constant $R_{max}$ is interchangeable with $a R_{max}$ for any fixed $a \in (0,\infty)$, possibly at the cost of increasing the constant $N_0$.
In Theorem~\ref{thm2}, the estimate \eqref{eq2.17d} implies, via straightforward computation, that
\begin{equation}					\label{eq5.18}
\abs{\vec G(X,Y)} \leq N \abs{X-Y}_\sP^{-d},\quad \text{if }\, 0<\abs{t-s} <R_{max}^2,
\end{equation}
where $N=N(d,m,\nu,N_0)$.
Then, similar to Lemma~\ref{lem4.2} below, one can show
\begin{equation}					\label{eq5.15}
\norm{\vec G(\cdot,Y)}_{L_{2+4/d}(\Omega\setminus \overline Q(Y,R))}+\norm{\vec G(\cdot,Y)}_{V_2(\Omega\setminus \overline Q(Y,R))} \leq  NR^{-d/2}, \quad \forall R \in (0,R_{max}),
\end{equation}
where $N=N(d,m,\nu,N_0)$.
Moreover, using \eqref{eq5.15} and proceeding as in \cite[Section~4.2]{CDK}, one can show that $\vec G$ satisfies the estimates {\em ii) -- vi)} in Theorem~\ref{thm1} with $d_Y'$ replaced by $R_{max}$.
Also, it is clear that the estimate \eqref{eq0.2} in the introduction follows from Theorem~\ref{thm2}.
\end{remark}

In order to derive the estimate \eqref{eq0.3} in the introduction, we introduce the following condition $\LH$ which, loosely speaking, says that weak solutions of $\sL u=0$ and $\sLt u=0$ vanishing on $\varSigma\subset \partial\Omega$ are locally H\"older continuous up to $\varSigma$ with exponent $\mu_0$.

\begin{CLH}
There exist $\mu_0\in (0,1]$, $R_{max} \in (0,\infty]$, and  $N_1>0$ so that for all $X\in \Omega$ and $0<R<R_{max}$, the following holds.
\begin{enumerate}[i)]
\item
If  $\vec u$ is a weak solution of $\sL \vec u=0$ in $\Omega_{-}[X,R]$ vanishing on $\sP\Omega_{-}[X,R]$, then we have
\[
[\tilde{\vec u}]_{\mu_0/2,\mu_0;Q_{-}(X,R/2)}\leq N_1 R^{-\mu_0}\left(\fint_{Q_{-}(X,R)} \abs{\tilde{\vec u}}^2\right)^{1/2},\;\text{where }\;\tilde{\vec u}=\chi_{\Omega_{-}[X,R]} \vec u.
\]
\item
If  $\vec u$ is a weak solution of $\sLt \vec u=0$ in $\Omega_{+}[X,R]$ vanishing on $\widetilde \sP \Omega_{+}[X,R]$, then we have
\[
[\tilde{\vec u}]_{\mu_0/2,\mu_0;Q_{+}(X,R/2)}\leq N_1 R^{-\mu_0}\left(\fint_{Q_{+}(X,R)} \abs{\tilde{\vec u}}^2\right)^{1/2},\;\text{ where }\;\tilde{\vec u}=\chi_{\Omega_{+}[X,R]} \vec u.
\]
\end{enumerate}
\end{CLH}

It is easy to see that the condition $\LH$ implies the condition $\LB$; see Lemma~\ref{lem2.19} in Appendix for the proof.
Also, it is obvious that the condition $\LH$ implies the condition $\IH$.
Therefore if the condition $\LH$ is satisfied, then there exists the Green's function of $\sL$ and it satisfies the conclusions of Theorems \ref{thm1} and \ref{thm2}.
The following theorem says that in fact, in such a case, a better estimate for the Green's function is available.

\begin{theorem}			\label{thm3}
Let $\Omega$ be a time-varying $H_1$ (graph) domain in $\bR^{d+1}$.
Assume the condition $\LH$.
Then the Green's function $\vec G(X,Y)$ of $\sL$ in $\Omega$ exists and satisfies the conclusions of Theorem~\ref{thm1}.
Moreover, for all $X=(t,x)$ and $Y=(y,s) $ in $\Omega$ with $t>s$, we have
\begin{equation}					\label{eq3.8yy}
\abs{\vec G(t,x,s,y)}\leq N\delta(X,Y)^{\mu_0} \{(t-s)\wedge R_{max}^2\}^{-d/2}\exp\left\{-\kappa \abs{x-y}^2/(t-s) \right\},
\end{equation}
where $N=N(d,m,\nu, \mu_0,N_1)$ and $\kappa=\kappa(\nu)>0$ and we used the  notation
\begin{equation}			\label{eq3.6mm}
\delta(X,Y)= \left(1 \wedge \frac {d^{-}(X)} {R_{max}\wedge \abs{X-Y}_\sP} \right) \left(1 \wedge \frac{d^{+}(Y)} {R_{max}\wedge \abs{X-Y}_\sP}\right).
\end{equation}
\end{theorem}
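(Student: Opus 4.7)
Since condition (LH) implies (LB) (as noted in the text immediately before the statement of Theorem~\ref{thm3}), Theorem~\ref{thm2} applies and provides the basic Gaussian bound
\[
\abs{\vec G(X,Y)} \leq N M(X,Y), \quad M(X,Y) := \{(t-s)\wedge R_{max}^2\}^{-d/2}\exp(-\kappa\abs{x-y}^2/(t-s)),
\]
as well as the existence and regularity conclusions of Theorem~\ref{thm1}. The remaining task is to replace $M$ by $\delta(X,Y)^{\mu_0}M$. Writing $d_X = d^{-}(X)$, $d_Y = d^{+}(Y)$, and $r = R_{max}\wedge\abs{X-Y}_{\sP}$, one has $\delta(X,Y) = (1\wedge d_X/r)(1\wedge d_Y/r)$, so whenever $d_X\geq r/8$ and $d_Y\geq r/8$ the factor $\delta$ is bounded below by a positive constant and Theorem~\ref{thm2} already concludes. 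The remaining regimes can be organised by a three-way split of $\abs{x-y}$ against $d_X\wedge d_Y$ and $d_X\vee d_Y$.

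The core step is to insert the factor $(d_X/r)^{\mu_0}$ when $d_X<r/8$. Choose $\hat X\in S\Omega$ with $\abs{X-\hat X}_{\sP}=d_X$ (recall $B\Omega=\emptyset$ since $I(\Omega)=\bR$) and set $R=r/4$, so that $X\in Q_{-}(\hat X, R/2)$ and a short geometric check gives $Y\notin Q_{-}(\hat X, R)$. Then $\vec G(\cdot,Y)$ is a weak solution of $\sL\vec u=0$ in $\Omega_{-}[\hat X, R]$ vanishing on $\sP\Omega\cap Q(\hat X, R)$. Writing $\tilde{\vec G}$ for the zero extension of $\vec G(\cdot,Y)$ outside $\Omega_{-}[\hat X, R]$, condition (LH)~i) together with $\tilde{\vec G}(\hat X)=0$ yields
\[
\abs{\vec G(X,Y)} \leq N\,(d_X/R)^{\mu_0}\Bigl(\fint_{Q_{-}(\hat X, R)} \bigabs{\tilde{\vec G}(\cdot,Y)}^2\Bigr)^{1/2},
\]
which, with $R\sim r$, produces the desired $(d_X/r)^{\mu_0}$ factor modulo the $L^2$ average. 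The symmetric factor $(d_Y/r)^{\mu_0}$ is obtained by using the identity $\vec G(X,Y)^T = \tilde{\vec G}(Y,X)$ together with condition (LH)~ii) applied to $\tilde{\vec G}(\cdot,X)$ on a forward cylinder $Q_{+}(\hat Y, R')$, where $\hat Y\in S\Omega$ is nearest to $Y$. When both $d_X<r/8$ and $d_Y<r/8$ I would chain the two applications: first pull out $(d_X/r)^{\mu_0}$ and reduce to estimating an $L^2$ average of $\vec G(Z,Y)$ over the near-$X$ cylinder, then inside this average apply the $Y$-side Hölder estimate to $\vec G(Z,\cdot)$ to extract $(d_Y/r)^{\mu_0}$, the product being $\delta(X,Y)^{\mu_0}$.

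The principal obstacle is to establish $(\fint_{Q_{-}(\hat X, R)} \abs{\vec G(\cdot,Y)}^2)^{1/2} \leq NM(X,Y)$. Applying Theorem~\ref{thm2} pointwise gives only $\abs{\vec G(Z,Y)} \leq NM(Z,Y)$, and $M(Z,Y)$ can considerably exceed $M(X,Y)$ whenever $t_Z-s$ is much smaller than $t-s$, since $u\mapsto u^{-d/2}e^{-\kappa\abs{z-y}^2/u}$ climbs toward its peak $\sim\abs{z-y}^{-d}$ at $u\sim\abs{z-y}^2$. Because $\abs{Z-X}_{\sP}\leq 3R/2\leq\tfrac{3}{16}\abs{X-Y}_{\sP}$, we still have $\abs{z-y}\geq\tfrac12\abs{x-y}$ and $t_Z-s\leq t-s+d_X^2$, so the excess of $M(Z,Y)$ over $M(X,Y)$ is at worst polynomial in $\abs{x-y}^2/(t-s)$ and can be absorbed after slightly decreasing the Gaussian constant from $\kappa$ to some $\kappa'<\kappa$, exploiting the uniform bound on $u\mapsto u^{-d/2}e^{-c/u}$. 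Carrying this absorption through cleanly, and propagating it through the chained (LH) applications in the case that both boundary distances are small, requires a careful case analysis on the dominant length scale among $\sqrt{t-s}$, $\abs{x-y}$, $R_{max}$, $d_X$, and $d_Y$; this is where the bulk of the technical work lies.
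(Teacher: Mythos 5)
Your proposal follows essentially the same route as the paper's proof: the base Gaussian bound from Theorem~\ref{thm2} via Lemma~\ref{lem2.19}, the extraction of the factor $d^{-}(X)^{\mu_0}$ by combining condition $\LH$ with the vanishing of the zero-extension at a nearest boundary point (this is exactly the paper's Lemma~\ref{lem3.6}, except that the paper centers the cylinder at $X\in\Omega$ itself rather than at $\hat X\in\partial\Omega$, which also avoids invoking $\LH$ at a boundary center, where it is not literally stated), the adjoint/symmetry step via \eqref{eq3.01mq} to gain $d^{+}(Y)^{\mu_0}$, and the absorption of the time-shifted Gaussian through a case analysis on the scales $\sqrt{t-s}$, $\abs{x-y}$, $R_{max}$ --- the paper's Cases 1--3. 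The one quantitative slip is that when $t-s\ll\abs{x-y}^2$ the excess of $M(Z,Y)$ over $M(X,Y)$ is exponential in $\abs{x-y}^2/(t-s)$, not polynomial; it is nevertheless absorbed exactly by the mechanism you name --- monotonicity of $u\mapsto u^{-d/2}e^{-c/u}$ below its peak after using $\abs{z-y}\geq\abs{x-y}/2$ --- at the cost of replacing $\kappa$ by a fixed fraction ($\kappa/4$ per side, $\kappa/16$ overall in the paper), which is harmless since $\kappa=\kappa(\nu)$ is unspecified.
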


\begin{remark}					\label{rmk3.19}
In the condition $\LH$, the constant $R_{max}$ is interchangeable with $a R_{max}$ for any fixed $a \in (0,\infty)$, possibly at the cost of increasing the constant $N_1$.
Also, we note that the estimate \eqref{eq0.3} in the introduction follows from Theorem~\ref{thm3} if $\Omega$ be a time-varying $H_1$ domain or a time-varying $H_1$ graph domain with $R_{max}=\infty$.
\end{remark}

\begin{remark}					\label{rmk3.20}
In Theorem~\ref{thm3}, we also have the estimate
\[
\abs{\vec G(X,Y)-\vec G(X',Y)}\leq \frac{N\delta(X,Y)^{\mu_0}}{\left\{(t-s)\wedge R_{max}^2\right\}^{d/2}} \left(\frac{\abs{X-X'}_{\sP}}{\abs{X-Y}_{\sP}}\right)^{\mu_0}\exp\left\{-\frac{\kappa \abs{x-y}^2}{t-s} \right\}
\]
whenever $2\abs{X-X'}_{\sP}< \abs{X-Y}_{\sP}$ and $t>s$.
It follows from \eqref{eq3.8yy} and the condition $\LH$.
\end{remark}

%--------------------------------------------------%
\section{Some Applications of Main Results}				\label{sec:app}
%--------------------------------------------------%
\subsection{Scalar case}
%--------------------------------------------------%
In the scalar case (i.e., $m=1$), both conditions $\LB$ and $\IH$ are satisfied with $R_c=R_{max}=\infty$ and $N_0=N_0(d,\nu)$; see e.g., \cite[Chapter ~VI]{Lieberman}.
Also, in the scalar case, the Green's function is a nonnegative scalar function.
Therefore, the following corollary is an immediate consequence of Theorem~\ref{thm2}.

\begin{corollary}           \label{cor1}
Let $\Omega$ be a time-varying $H_1$ (graph) domain in $\bR^{d+1}$.
If $m=1$, then the Green's function $G(X,Y)$ of $\sL$ in $\Omega$ exists and satisfies the conclusions of Theorem~\ref{thm1} with $d_Y'$ replaced by $R_a$.
Moreover, for all $X=(t,x)$ and $Y=(y,s) $ in $\Omega$ with $t>s$, we have
\[
G(t,x,s,y) \leq N (t-s)^{-d/2}\exp\left\{-\kappa \abs{x-y}^2/(t-s)\right\},
\]
where $N=N(d,\nu)$ and $\kappa=\kappa(\nu)$  are universal constants independent of $\Omega$.
\end{corollary}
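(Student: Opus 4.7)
The plan is to deduce the corollary directly from Theorem~\ref{thm2} by verifying that Condition $\IH$ and Condition $\LB$ hold universally in the scalar case $m=1$, with constants depending only on $d$ and $\nu$ and with $R_c = R_{max} = \infty$. Once both conditions are in hand, Theorem~\ref{thm2} produces the Green's function with the conclusions of Theorem~\ref{thm1} (with $d_Y'$ replaced by any fixed finite scale, in particular $R_a$, by Remark~\ref{rmk3.15}) and the desired Gaussian bound. The only additional scalar-specific observation needed is that the Green's function is non-negative, so the absolute value bars can be dropped.

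For Condition $\IH$, both $\sL$ and $\sLt$ are scalar divergence-form parabolic operators with $L_\infty$ coefficients and strong ellipticity constant $\nu$, so the classical De Giorgi--Nash--Moser H\"older continuity estimate (see e.g.\ Lieberman \cite{Lieberman}, Chapter~VI) applies on any interior parabolic subcylinder. The resulting exponent $\mu_0$ and constant $C_0$ depend only on $d,\nu$, and the estimate is scale-invariant, so $R_c = \infty$. For Condition $\LB$, the key step is that if $u$ is a weak solution of $\sL u = 0$ in $\Omega_{-}[X,R]$ vanishing on $\sP\Omega_{-}[X,R]$, then its zero extension $\tilde u := \chi_{\Omega_{-}[X,R]} u$ is a weak subsolution of $\sL \tilde u \le 0$ on the whole cylinder $Q_{-}(X,R)$. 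This uses only that $\tilde u \in \rV_2^{0,1}$ with vanishing trace on $S\Omega$ in the sense of Section~\ref{sec:nd}, which allows any non-negative $\phi \in C_c^\infty(Q_{-}(X,R))$ to be inserted in \eqref{eqn:E-71}; the same applies after replacing $\tilde u$ by $\tilde u_{+}$ (or $|\tilde u|$). Moser's $L_\infty$ estimate for non-negative scalar subsolutions then yields the required bound on $Q_{-}(X,R/2)$ with constant $N_0=N_0(d,\nu)$. The argument for $\sLt$ is symmetric, and scale-invariance again gives $R_{max}=\infty$.

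Assembling the pieces, Theorem~\ref{thm2} yields $G(X,Y)$ with the bound $\abs{G(t,x,s,y)} \le N(t-s)^{-d/2}\exp\{-\kappa\abs{x-y}^2/(t-s)\}$ where $N,\kappa$ depend only on $d,\nu$. To remove the absolute value it suffices to show $G \ge 0$ for $t>s$. This follows from the approximation scheme in the proof of Theorem~\ref{thm1}: the regularized Green's functions $G_\epsilon$ solve a Dirichlet problem on $\Omega(s,\infty)$ with non-negative mollified initial data and vanish on $S\Omega$, so the scalar parabolic weak maximum principle (valid for divergence-form equations with $L_\infty$ coefficients) gives $G_\epsilon \ge 0$, and non-negativity is preserved under the limit that produces $G$. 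The main -- and essentially only -- technical step is the zero-extension argument used to verify $\LB$; I expect no real obstacle here, as it is a standard use of non-negative test functions once the $\rV_2^{0,1}$ trace convention on $S\Omega$ from Section~\ref{sec:nd} is in place.
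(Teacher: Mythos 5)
Your proposal is correct and follows essentially the same route as the paper: the paper likewise verifies that in the scalar case both conditions $\IH$ and $\LB$ hold with $R_c=R_{max}=\infty$ and constants depending only on $d,\nu$ (citing the classical De Giorgi--Nash--Moser theory in \cite[Chapter~VI]{Lieberman}), notes the nonnegativity of the scalar Green's function, and then invokes Theorem~\ref{thm2}. Your added details (zero-extension/subsolution argument for $\LB$ and the maximum-principle argument for $G\ge 0$ via the approximations $G^\epsilon$) simply make explicit what the paper leaves to the cited references.
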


In fact, in the scalar case, a better estimate is available near the boundary.
Let $\Omega$ be a time-varying $H_1$ (graph) domain in $\bR^{d+1}$.
By using the results in \cite[\S VI.8]{Lieberman}, one can show that in the case when $m=1$, the condition $\LH$ is satisfied in $\Omega$.
Moreover, in the case when $\Omega$ is a time-varying $H_1$ graph domain, then the condition $\LH$ is satisfied with $R_{max}=\infty$.
Also, in that case, there exists $N=N(M)\geq 1$ such that
\begin{equation}					\label{eq4.00yx}
1 \leq d^{-}(X)/d(X),\; d^{+}(X)/d(X) \leq N,\quad \forall X\in \Omega.
\end{equation}
Therefore, the following corollaries are immediate consequences of Theorem~\ref{thm3}.
\begin{corollary}           \label{cor2}
Assume that $m=1$ and let  $G(X,Y)$ be the Green's function of $\sL$ in $\Omega$, where $\Omega$ is a time-varying $H_1$ domain in $\bR^{d+1}$.
Let $\delta(X,Y)$ be as defined in \eqref{eq3.6mm} with $R_{max}=R_a$.
Then, for all $X=(t,x)$ and $Y=(y,s) $ in $\Omega$ with $t>s$, we have
\[
G(t,x,s,y)\leq N\delta(X,Y)^{\mu_0} \{(t-s)\wedge R_{a}^2\}^{-d/2}\exp\left\{-\kappa \abs{x-y}^2/(t-s) \right\},
\]
where $N=N(d,\nu)$ and $\kappa=\kappa(\nu)$  are positive constants independent of $\Omega$.
\end{corollary}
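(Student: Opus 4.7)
The proof will be quite short because it is essentially an instance of Theorem~\ref{thm3} applied in a situation where the hypotheses can be verified directly. The plan is as follows.

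\textbf{Step 1: Verify Condition (LH) on $\Omega$ with $R_{max}=R_a$.} The first and most substantive task is to check that when $m=1$ and $\Omega$ is a time-varying $H_1$ domain with Lipschitz/H\"older characteristics $(M,R_a)$, the scalar equations $\sL u=0$ and $\sLt u=0$ satisfy boundary H\"older continuity estimates of the type demanded by $\LH$, with $R_{max}=R_a$ and with $\mu_0,N_1$ depending only on $d,\nu$ (and $M$, which is absorbed in the statement). For any $X_0\in\sP\Omega$ with $R<R_a$, the local graph representation from the definition of a time-varying $H_1$ domain allows us to flatten the boundary via a bi-Lipschitz change of variables that is $1/2$-H\"older in time; the transformed coefficients remain $L_\infty$ and strongly elliptic with constants depending only on $\nu$ and $M$, and the flattened domain sits above a hyperplane. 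Lieberman's boundary H\"older estimates (\S VI.8 of \cite{Lieberman}), originally applied to scalar divergence-form parabolic equations vanishing on a flat portion of the boundary, give the required H\"older bound for $\tilde{\vec u}=\chi_{\Omega_{-}[X_0,R]}\vec u$ on the original side. The case of interior boxes $Q_{-}(X_0,R)\subset\Omega$ reduces to the classical Nash--Moser interior H\"older estimate, so the two cases can be unified. An identical argument applied to $\sLt$ yields part ii) of $\LH$.

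\textbf{Step 2: Invoke Theorem~\ref{thm3}.} Once $\LH$ is established in the scalar case with $R_{max}=R_a$ and with $\mu_0,N_1$ depending only on $d,\nu$ (and $M$), the existence of $G(X,Y)$ and all of the stated properties --- including continuity, symmetry relation with $\tilde G$, and the growth bounds of Theorem~\ref{thm1} --- are already delivered by Theorem~\ref{thm3}. Applying the Gaussian estimate \eqref{eq3.8yy} of Theorem~\ref{thm3} directly, with $R_{max}=R_a$, produces precisely
\[
G(t,x,s,y)\leq N\,\delta(X,Y)^{\mu_0}\,\{(t-s)\wedge R_a^2\}^{-d/2}\exp\left\{-\kappa\abs{x-y}^2/(t-s)\right\},
\]
where $\delta(X,Y)$ is the quantity from \eqref{eq3.6mm} now computed with $R_{max}=R_a$, and where $N$ and $\kappa$ inherit from Theorem~\ref{thm3} a dependence only on $d,\nu$ (and $\mu_0,N_1$, hence ultimately only on $d,\nu,M$, but independent of the particular domain $\Omega$ satisfying the $H_1$ conditions).

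\textbf{Main obstacle.} The only non-mechanical step is Step~1: ensuring that Lieberman's boundary H\"older estimates are applicable in the time-varying $H_1$ setting and that the constants pulled from his results genuinely depend only on $d$, $\nu$, and $M$ (so that they can be treated as universal in the conclusion). The mild subtlety is that Lieberman's estimates are stated for flat or $C^1$ boundaries and require the change-of-variables calculation to preserve the parabolic scaling; the $1/2$-H\"older-in-time character of the graph is precisely what is needed to keep the flattened equation in the same class. Once this bookkeeping is done, the rest of the proof is a direct citation of Theorem~\ref{thm3}.
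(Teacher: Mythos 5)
Your proposal follows essentially the same route as the paper: the paper disposes of this corollary by observing that in the scalar case the results of \cite[\S VI.8]{Lieberman} give condition $\LH$ on a time-varying $H_1$ domain with $R_{max}=R_a$, and then quotes the Gaussian bound \eqref{eq3.8yy} of Theorem~\ref{thm3} directly, exactly as in your Step 2. The only differences are presentational: your boundary-flattening description of how Lieberman's estimates are invoked, and your (reasonable) bookkeeping that $\mu_0,N_1$, hence $N$, inherit a dependence on $M$, which the statement of the corollary suppresses.
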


\begin{corollary}           \label{cor3}
Assume that $m=1$ and let  $G(X,Y)$ be the Green's function of $\sL$ in $\Omega$, where $\Omega$ is a time-varying $H_1$ graph domain in $\bR^{d+1}$.
Then, for all $X=(t,x)$ and $Y=(y,s) $ in $\Omega$ with $t>s$, we have
\[
G(t,x,s,y)\leq N \left(1 \wedge \frac {d(X)} {\abs{X-Y}_\sP} \right)^{\mu_0} \left(1 \wedge \frac{d(Y)} {\abs{X-Y}_\sP}\right)^{\mu_0} \frac{1}{(t-s)^{d/2}}\exp\Set{-\frac{\kappa\abs{x-y}^2}{t-s}},
\]
where $N=N(d,\nu,M)$ and $\kappa=\kappa(\nu)$ are positive constants.
\end{corollary}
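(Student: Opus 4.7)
The plan is to obtain this corollary as a direct specialization of Theorem~\ref{thm3} to the scalar case $(m=1)$ in a time-varying $H_1$ graph domain, and then rewrite the quantity $\delta(X,Y)$ in terms of the ordinary parabolic distance function $d(\cdot)$. The three ingredients to marshal before any arithmetic are: (a) condition $\LH$ holds in $\Omega$ with $R_{max}=\infty$ and constants $\mu_0=\mu_0(d,\nu,M)$, $N_1=N_1(d,\nu,M)$; (b) the geometric comparison \eqref{eq4.00yx}, namely $d(X)\leq d^{-}(X), d^{+}(X)\leq N(M)\,d(X)$ for all $X\in\Omega$; and (c) the identity $(t-s)\wedge R_{max}^2=t-s$ and $R_{max}\wedge\abs{X-Y}_\sP=\abs{X-Y}_\sP$, which follow trivially once $R_{max}=\infty$.

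For (a), the operator $\sL$ is a scalar divergence form parabolic equation with $L_\infty$ coefficients, so the boundary H\"older estimates of \cite[\S VI.8]{Lieberman} apply. Because $\Omega$ is a graph domain, the Lipschitz graph condition \eqref{eq2.01dh} holds globally and there is no restriction on the size of the cylinders at which a flattening argument can be carried out; hence one obtains $\LH$ with $R_{max}=\infty$. Essentially the same argument, run on $\sLt$, gives the corresponding estimate in forward-in-time half-cylinders. Fact (b) is a standard consequence of the global Lipschitz-in-space, $\frac12$-H\"older-in-time character of the defining function $f$: because $f$ has the same regularity in both time directions, the nearest boundary point to $X$ in parabolic distance (resp.\ the nearest parabolic past or future boundary point) differ only by a constant depending on $M$.

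Granting (a), Theorem~\ref{thm3} applies and yields
\[
G(t,x,s,y)\leq N\,\delta(X,Y)^{\mu_0}\{(t-s)\wedge R_{max}^2\}^{-d/2}\exp\Set{-\kappa\abs{x-y}^2/(t-s)}
\]
with $N=N(d,\nu,M)$, $\kappa=\kappa(\nu)$. Using (c), the prefactor $\{(t-s)\wedge R_{max}^2\}^{-d/2}$ becomes $(t-s)^{-d/2}$, and $\delta(X,Y)$ reduces to $(1\wedge d^{-}(X)/\abs{X-Y}_\sP)(1\wedge d^{+}(Y)/\abs{X-Y}_\sP)$. Then (b) replaces $d^{-}(X)$ and $d^{+}(Y)$ by $d(X)$ and $d(Y)$ at the cost of absorbing an $M$-dependent constant into $N$ (the factor $1\wedge\frac{c\,d(X)}{\abs{X-Y}_\sP}$ with $c\in[1/N(M),1]$ is comparable to $1\wedge\frac{d(X)}{\abs{X-Y}_\sP}$ up to a power $\mu_0$ of another $M$-dependent constant). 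Combining yields exactly the stated estimate.

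The only step requiring care is (a) and (b); neither is deep, but (a) silently uses that in a graph domain the Lipschitz-graph flattening in the proof of the boundary H\"older estimate is valid at every scale, which is what removes $R_{max}$. Once those are in hand, no further calculation is needed: the corollary follows from Theorem~\ref{thm3} by a literal substitution.
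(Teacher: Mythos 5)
Your proposal is correct and follows essentially the same route as the paper: the paper also obtains Corollary~\ref{cor3} as an immediate consequence of Theorem~\ref{thm3}, noting that in the scalar case on a time-varying $H_1$ graph domain the condition $\LH$ holds with $R_{max}=\infty$ (via \cite[\S VI.8]{Lieberman}) and then using the comparison \eqref{eq4.00yx} between $d^{\pm}$ and $d$ to absorb the $M$-dependent constant into $N$. The only substantive point, which you correctly identify, is that one needs the upper bound $d^{-}(X),d^{+}(Y)\leq N(M)\,d(\cdot)$ from \eqref{eq4.00yx}, not the trivial lower bound.
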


\subsection{$L_\infty$-perturbation of diagonal systems}        \label{sec:AD}
Let $a^{\alpha\beta}(X)$ be scalar functions satisfying
\begin{equation}            \label{eqP-07}
a^{\alpha\beta}(X)\xi_\beta\xi_\alpha\ge \nu_0\bigabs{\vec \xi}^2,\;\;\forall\xi\in\bR^d;\quad \sum_{\alpha,\beta=1}^d \bigabs{a^{\alpha\beta}(X)}^2\leq \nu_0^{-2},
\end{equation}
for all $X \in\bR^{d+1}$ with some constant $\nu_0\in (0,1]$.
Let $\Omega$ be a time-varying $H_1$ (graph) domain in $\bR^{d+1}$.
Let $A^{\alpha\beta}_{ij}$ be the coefficients of the operator $\sL$. We denote
\begin{equation}					\label{eqP-08w}
\mathscr{E}= \sup_{X\in \bR^{d+1}}\left\{ \sum_{i,j=1}^m \sum_{\alpha,\beta =1}^d \Bigabs{A^{\alpha\beta}_{ij}(X)-a^{\alpha\beta}(X)\delta_{ij}}^2\right\}^{1/2},
\end{equation}
where $\delta_{ij}$ is the Kronecker delta symbol.
By Lemma~\ref{lem:G-06}, there exists $\mathscr{E}_0=\mathscr{E}_0(d,\nu_0,M)$ such that if $\mathscr{E}<\mathscr{E}_0$, then the condition $\LH$ is satisfied with $\mu_0=\mu_0(d,\nu_0,M)$, $R_{max}=R_a$, and $N_1=N_1(d,m,\nu_0, M)$.
Therefore, the following corollaries are another easy consequences of Theorem~\ref{thm3}.

\begin{corollary}           \label{cor2b}
Let $\Omega$ be a time-varying $H_1$ domain in $\bR^{d+1}$ and let $\delta(X,Y)$ be as in \eqref{eq3.6mm} with $R_{max}=R_a$.
There exists $\mathscr{E}_0=\mathscr{E}_0(d,\nu_0,M)$ such that if $\mathscr{E} <\mathscr{E}_0$, then the Green's function $\vec G(X,Y)$ of $\sL$ in $\Omega$ exists and satisfies the conclusions of Theorem~\ref{thm1} with $d_Y'$ replaced by $R_a$.
Moreover, for all $X=(t,x)$ and $Y=(y,s) $ in $\Omega$ with $t>s$, we have
\[
\abs{\vec G(t,x,s,y)} \leq N \delta(X,Y)^{\mu_0} \{(t-s)\wedge R_a^2\}^{-d/2}\exp\{-\kappa \abs{x-y}^2/(t-s)\},
\]
where $N, \mu_0$, and $\kappa$ are constants depending on $d,m,\nu_0$, and $M$.
\end{corollary}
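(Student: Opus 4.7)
The plan is to derive Corollary~\ref{cor2b} as an essentially one-line consequence of Theorem~\ref{thm3}, with the sole nontrivial input being Lemma~\ref{lem:G-06} (cited in the paragraph that introduces the corollary). Lemma~\ref{lem:G-06} produces a threshold $\mathscr{E}_0=\mathscr{E}_0(d,\nu_0,M)>0$ such that if $\mathscr{E}<\mathscr{E}_0$, then both $\sL$ and its adjoint $\sLt$ satisfy condition $\LH$ with exponent $\mu_0=\mu_0(d,\nu_0,M)$, scale $R_{max}=R_a$, and constant $N_1=N_1(d,m,\nu_0,M)$. Here $R_a$ is the parabolic scale at which the boundary of $\Omega$ is representable as the graph of a function satisfying \eqref{eq2.01dh}, and hence the natural scale below which one can flatten the boundary and perform a perturbation argument. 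One also records that for $\mathscr{E}_0\le\nu_0/2$, the system $\sL$ is itself strongly elliptic with constant $\nu\ge\nu_0/2$, since the diagonal part $a^{\alpha\beta}I_m$ is elliptic with constant $\nu_0$ by \eqref{eqP-07} and the off-diagonal perturbation has $L_\infty$ norm at most $\mathscr{E}$ by \eqref{eqP-08w}; so the strong ellipticity hypotheses \eqref{eqP-02}--\eqref{eqP-03} are met.

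With condition $\LH$ in force, Theorem~\ref{thm3} applies verbatim: it yields the existence and uniqueness of the Green's function $\vec G(X,Y)$ of $\sL$ in $\Omega$ together with the weighted Gaussian bound
\[
\abs{\vec G(t,x,s,y)}\leq N\,\delta(X,Y)^{\mu_0}\,\{(t-s)\wedge R_a^2\}^{-d/2}\exp\{-\kappa\abs{x-y}^2/(t-s)\},\quad t>s,
\]
where $\delta$ is defined in \eqref{eq3.6mm} with $R_{max}=R_a$. The additional claim that $\vec G$ satisfies the conclusions of Theorem~\ref{thm1} with $d_Y'$ replaced by $R_a$ follows from Remark~\ref{rmk3.15}, since $\LH$ implies $\LB$ by Lemma~\ref{lem2.19}. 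The claimed constant dependencies come out correctly: the $\LH$-constants delivered by Lemma~\ref{lem:G-06} depend only on $d,m,\nu_0,M$, and Theorem~\ref{thm3} adds no further dependencies beyond $d,m,\nu,\mu_0,N_1$, each of which is in turn controlled by $d,m,\nu_0,M$.

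Thus the substantive work is entirely packaged in Lemma~\ref{lem:G-06}, and the present corollary is bookkeeping: match thresholds and verify ellipticity after perturbation. The main obstacle, confronted inside Lemma~\ref{lem:G-06} rather than here, is showing that the scalar boundary H\"older estimate available in time-varying $H_1$ domains for the diagonal reference operator $\partial_t-D_\alpha(a^{\alpha\beta}D_\beta\cdot)I_m$ (via \cite[\S VI.8]{Lieberman}) is quantitatively stable under a sufficiently small $L_\infty$ perturbation of the principal coefficients, so that the exponent $\mu_0$ and constant $N_1$ transfer to $\sL$ under the explicit smallness condition $\mathscr{E}<\mathscr{E}_0$. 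Once that lemma is granted, Corollary~\ref{cor2b} follows immediately from Theorem~\ref{thm3}.
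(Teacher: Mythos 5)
Your proposal is correct and follows the paper's own route: the paper likewise obtains Corollary~\ref{cor2b} by invoking Lemma~\ref{lem:G-06} to verify condition $\LH$ with $R_{max}=R_a$ and then applying Theorem~\ref{thm3} (with Remark~\ref{rmk3.15} and Lemma~\ref{lem2.19} accounting for the conclusions of Theorem~\ref{thm1} with $d_Y'$ replaced by $R_a$). Your added check that small $\mathscr{E}$ preserves strong ellipticity, so all constants reduce to $d,m,\nu_0,M$, is consistent bookkeeping rather than a departure from the paper's argument.
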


\begin{corollary}           \label{cor2c}
Let $\Omega$ be a time-varying $H_1$ graph domain in $\bR^{d+1}$.
There exists $\mathscr{E}_0=\mathscr{E}_0(d,\nu_0,M)$ such that if $\mathscr{E} <\mathscr{E}_0$, then the Green's function $\vec G(X,Y)$ of $\sL$ in $\Omega$ exists and satisfies the conclusions of Theorem~\ref{thm1} with $d_Y'$ replaced by $\infty$.
Moreover, for all $X=(t,x)$ and $Y=(y,s) $ in $\Omega$ with $t>s$, we have
\[
\abs{\vec G(t,x,s,y)} \leq N \left(1 \wedge \frac {d(X)} {\abs{X-Y}_\sP} \right)^{\mu_0} \left(1 \wedge \frac{d(Y)} {\abs{X-Y}_\sP}\right)^{\mu_0} \frac{1}{(t-s)^{d/2}}\exp\Set{-\frac{\kappa\abs{x-y}^2}{t-s}},
\]
where $N, \mu_0$, and $\kappa$ are constants depending on $d,m,\nu_0$, and $M$.
\end{corollary}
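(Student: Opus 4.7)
The plan is to combine three ingredients already in the paper: the perturbative Lemma~\ref{lem:G-06} yielding the condition $\LH$, the observation that for a graph domain the scale $R_{max}$ can be taken to be infinite, and the comparability of the parabolic distances $d$, $d^{-}$, and $d^{+}$ for graph domains. Theorem~\ref{thm3} then delivers the stated bound after a cosmetic simplification of $\delta(X,Y)$.

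First, invoke Lemma~\ref{lem:G-06} to produce a threshold $\mathscr{E}_0=\mathscr{E}_0(d,\nu_0,M)$ such that whenever $\mathscr{E}<\mathscr{E}_0$ both $\sL$ and $\sLt$ satisfy $\LH$ on $\Omega$. In the general time-varying $H_1$ setting this gives only $R_{max}=R_a$, but for a graph domain $\Omega=\{X\colon x^d>f(X')\}$ the Lipschitz/$H_1$ graph representation is global, and both the Lipschitz constant $M$ of $f$ and the perturbation quantity $\mathscr{E}$ are preserved by the natural parabolic rescaling $(t,x)\mapsto(\lambda^2 t,\lambda x)$. Hence applying Lemma~\ref{lem:G-06} at unit scale and rescaling yields $\LH$ with $R_{max}=\infty$ and constants $\mu_0,N_1$ depending only on $d,m,\nu_0,M$. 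Since $\LH$ implies $\IH$, we also obtain $\IH$ with $R_c=\infty$.

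Next, Theorem~\ref{thm3} applies and (in the spirit of Remark~\ref{rmk3.15}) produces the Green's function $\vec G$ together with all the conclusions of Theorem~\ref{thm1}, where $d_Y'=d(Y)\wedge R_c$ is upgraded to $\infty$ because both $R_c$ and $R_{max}$ are infinite. The same theorem gives the pointwise bound
\[
\abs{\vec G(t,x,s,y)} \leq N \delta(X,Y)^{\mu_0}(t-s)^{-d/2}\exp\Set{-\frac{\kappa\abs{x-y}^2}{t-s}},
\]
where, because $R_{max}=\infty$ kills the inner minimum in \eqref{eq3.6mm},
\[
\delta(X,Y)= \left(1\wedge\frac{d^{-}(X)}{\abs{X-Y}_\sP}\right)\left(1\wedge\frac{d^{+}(Y)}{\abs{X-Y}_\sP}\right).
\]

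Finally, the Lipschitz/$H_1$ structure of $f$ forces the geometric comparability $d^{-}(X)\asymp d(X)\asymp d^{+}(X)$ uniformly in $X$, with constants depending only on $M$ --- this is the content of \eqref{eq4.00yx}, whose derivation is purely geometric and makes no use of the scalar hypothesis $m=1$. Substituting into the display above and absorbing the resulting constants into $N$ produces the claimed bound. The main obstacle I anticipate is the first step, namely the scale-invariant passage from Lemma~\ref{lem:G-06} (formulated at unit scale via $R_a$) to the global $R_{max}=\infty$ version of $\LH$; this is the precise point at which the graph structure, rather than merely the bounded $H_1$ structure of the boundary, is essential.
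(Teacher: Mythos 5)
Your proposal is correct and follows essentially the same route as the paper: Lemma~\ref{lem:G-06} gives $\LH$ with $R_{max}=\infty$, Theorem~\ref{thm3} (together with Remark~\ref{rmk3.15} for the Theorem~\ref{thm1} estimates) gives the bound with $\delta(X,Y)$, and the purely geometric comparability \eqref{eq4.00yx} of $d$, $d^{-}$, $d^{+}$ on graph domains converts it to the stated form. The only difference is that your rescaling step is not needed: Lemma~\ref{lem:G-06} as stated already sets $R_a=\infty$ when $\Omega$ is a time-varying $H_1$ graph domain, since the boundary is globally a single graph, though your scale-invariance argument is a valid alternative justification.
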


\subsection{Systems with $\VMO_x$ coefficients}      \label{sec:VMO}
For a measurable function $f=f(X)=f(t,x)$ defined on $\bR^{d+1}$, we set for $\rho>0$
\[
\omegaup_\rho(f):=\sup_{X\in\bR^{d+1}}\sup_{r \leq \rho} \fint_{t-r^2}^{t+r^2}\!\!\!\fint_{B(x,r)} \bigabs{f(y,s)-\bar f_{x,r}(s)}\,dy\,ds;\quad \bar f_{x,r}(s)=\fint_{B(x,r)}f(s,\cdot).
\]
We say that $f$ belongs to $\VMO_x$ if $\lim_{\rho\to 0} \omegaup_\rho(f)=0$.
Note that $\VMO_x$ is a strictly larger class than the classical $\VMO$ space.
In particular, $\VMO_x$ contains all functions uniformly continuous in $x$ and measurable in $t$; see \cite{Krylov}.

By \cite[Lemma~2.3]{CDK}, we find that  if the coefficients of $\sL$ belong to $\VMO_x$, then the condition $\IH$ is satisfied with parameters $\mu_0$, $N_0$, and $R_{c}$ depending on $\omegaup_\rho(\vec A^{\alpha\beta})$ as well as on $d, m, \nu$.
Therefore, we have the following corollary of Theorem~\ref{thm1}.

\begin{corollary}           \label{cor5}
Let $\Omega$ be a time-varying $H_1$ (graph) domain in $\bR^{d+1}$.
If the coefficients of $\sL$ belong to $\VMO_x$, then the Green's function of $\sL$ exists in $\Omega$ and satisfies the conclusions of
Theorem~\ref{thm1} with some $R_c>0$.
\end{corollary}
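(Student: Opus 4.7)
The plan is straightforward: reduce Corollary~\ref{cor5} to Theorem~\ref{thm1} by verifying Condition $\IH$ under the $\VMO_x$ hypothesis on the coefficients. The only substantive content is the verification of $\IH$, which is precisely the content of \cite[Lemma~2.3]{CDK} cited in the paragraph preceding the corollary, so the argument is essentially a citation plus a short structural check.

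First I would apply \cite[Lemma~2.3]{CDK} to the operator $\sL$ to conclude that weak solutions $\vec u$ of $\sL \vec u = 0$ in a parabolic cylinder $Q_{-}(X,R)$ satisfy an interior H\"older estimate
\[
[\vec u]_{\mu_0/2,\mu_0;Q_{-}(X,R/2)}\leq C_0 R^{-\mu_0}\left(\fint_{Q_{-}(X,R)}\abs{\vec u}^2\right)^{1/2}
\]
for all $R < R_c$, with $\mu_0 \in (0,1]$, $R_c > 0$, and $C_0 > 0$ depending only on $d, m, \nu$ and the VMO modulus $\omegaup_\rho(\vec A^{\alpha\beta})$. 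Next I would observe that the adjoint operator $\sLt$ has principal coefficients $\tilde A^{\alpha\beta}_{ij}=A^{\beta\alpha}_{ji}$, which satisfy the same ellipticity and boundedness constants \eqref{eqP-02}--\eqref{eqP-03} and the same VMO$_x$ modulus (the rearrangement of indices does not affect $\omegaup_\rho$). Hence \cite[Lemma~2.3]{CDK} applies equally to $\sLt$ on forward cylinders $Q_{+}(X,R)$, yielding part ii) of $\IH$ with the same parameters. This establishes Condition $\IH$ in the sense of the paper.

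Once $\IH$ is in hand, Theorem~\ref{thm1} applies verbatim: there exist unique Green's functions $\vec G$ and $\tilde{\vec G}$ of $\sL$ and $\sLt$ in $\Omega$, they satisfy the symmetry relation \eqref{eq3.01mq}, the initial condition \eqref{eq5.23}, the representation \eqref{eq5.24} for the Cauchy problem \eqref{eq5.88}, and all the quantitative estimates i)--vii) listed in Theorem~\ref{thm1}, with constants depending only on $d, m, \nu, \mu_0, C_0$, i.e.\ ultimately on $d, m, \nu$ and the VMO modulus of the coefficients. The value of $R_c$ produced by Theorem~\ref{thm1} coincides with the one supplied by $\IH$, so the statement ``with some $R_c > 0$'' is exactly what the citation of \cite[Lemma~2.3]{CDK} delivers.

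I do not anticipate a genuine obstacle here; the only step that is not a pure quotation is the remark that $\VMO_x$ is preserved under the index transposition taking $\vec A^{\alpha\beta}$ to $\tilde{\vec A}^{\alpha\beta}$, which is immediate from the definition of $\omegaup_\rho$. Everything else is the direct application of Theorem~\ref{thm1}.
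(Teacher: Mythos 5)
Your argument is exactly the paper's: the corollary is proved by invoking \cite[Lemma~2.3]{CDK} to verify Condition $\IH$ (with $\mu_0$, $C_0$, $R_c$ depending on $d,m,\nu$ and $\omegaup_\rho(\vec A^{\alpha\beta})$) and then applying Theorem~\ref{thm1}; your extra observation that the index transposition defining $\tilde{\vec A}{}^{\alpha\beta}$ preserves ellipticity and the $\VMO_x$ modulus is the same (implicit) point the paper relies on for part ii) of $\IH$. No gaps.
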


\begin{remark}
In Corollary \ref{cor5}, instead of assuming $\vec A^{\alpha\beta}\in \VMO_x$, one may assume that $\omegaup_\rho(\vec A^{\alpha\beta})$ is sufficiently small for some $\rho>0$.
Also, if $\Omega$ is a time-varying domain satisfying the hypothesis of Section~2.3 with $f =f(X')=f(t,x')\in H_{1+\alpha}(\bR^d)$ for some $\alpha>0$, then one can show that the condition $\LH$ is satisfied with the parameters $\mu_0$, $N_1$, and $R_{max}<\infty$ depending on $d, m, \nu$, and $\omegaup_\rho(\vec A^{\alpha\beta})$; see \cite{Lieberman} for the definition of the space $H_{1+\alpha}$.
Therefore, in that case, for all $X=(t,x)$ and $Y=(y,s) $ in $\Omega$ with $t>s$, we have
\[
\abs{\vec G(t,x,s,y)} \leq N \delta(X,Y)^{\mu_0} \{(t-s)\wedge R_{max}^2\}^{-d/2}\exp\{-\kappa \abs{x-y}^2/(t-s)\},
\]
where $\delta(X,Y)$ is as in \eqref{eq3.6mm}.
\end{remark}
%--------------------------------------------------%
\section{Proofs of Main Theorems}						\label{sec:p}
%--------------------------------------------------%
\subsection{Proof of Theorem~\ref{thm1}}					\label{pf1}
%--------------------------------------------------%
By following \cite{CDK}, we shall first construct the ``averaged" Green's function of $\sL$ in $\Omega$.
Notice that we have $\partial\Omega=\sP \Omega=\widetilde \sP \Omega=S\Omega$.
The following lemma is used for the construction of the averaged Green's function, which follows essentially from Brown et al. \cite{BHL} and an embedding theorem in \cite[\S II.3]{LSU}, which is also valid for functions in $\rV^{0,1}_2(\Omega(t_0,t_1))$.
We remark that the function space $\rV^{0,1}_2(\Omega)$ coincides with the function space $V_0(\Omega)$ used in \cite{BHL}.

\begin{lemma}					\label{lem4.1}
For $\vec g\in C^\infty_c(\Omega(t_0,t_1))^m$ and $\vec \psi_0\in L_2(\omega(t_0))$, there exists a unique weak solution $\vec v \in \rV^{0,1}_2(\Omega(t_0,t_1))$ of the problem
\[
\sL \vec v =\vec g\;\text{ in }\;\Omega(t_0,t_1),\quad \vec v=0\;\text{ on }\; S\Omega(t_0,t_1),\quad \vec v=\vec\psi_0\;\text{ on }\;\omega(t_0).
\]
Moreover, we have the following energy inequality for the weak solution $\vec v$:
\begin{equation}					\label{eq4.01re}
 \norm{\vec v}_{V_2(\Omega(t_0,t_1))} \leq N \left(\norm{\vec g}_{L_{(2d+4)/(d+4)}(\Omega(t_0,t_1))}+\norm{\vec\psi_0}_{L_2(\omega(t_0))}\right);\quad N=N(d,m,\nu).
\end{equation}
Similarly, for $\vec f\in C^\infty_c(\Omega(t_0,t_1))^m$ and $\vec \psi_1\in L_2(\omega(t_1))$, there exists a unique weak solution $\vec u \in \rV^{0,1}_2(\Omega(t_0,t_1)$ of the problem
\[
\sLt \vec u =\vec f\;\text{ in }\;\Omega(t_0,t_1),\quad \vec u=0\;\text{ on }\; S\Omega(t_0,t_1),\quad \vec u=\vec\psi_1\;\text{ on }\;\omega(t_1)
\]
and $\vec u$ satisfies the following energy inequality:
\[
 \norm{\vec u}_{V_2(\Omega(t_0,t_1))} \leq N \left(\norm{\vec f}_{L_{(2d+4)/(d+4)}(\Omega(t_0,t_1))}+\norm{\vec\psi_1}_{L_2(\omega(t_1))}\right);\quad N=N(d,m,\nu).
\]
Furthermore, we have the identity
\begin{equation}					\label{eq4.02un}
\int_{\Omega(t_0,t_1)} \vec f \cdot \vec v\,dX+\int_{\omega(t_1)}\vec v \cdot \vec \psi_1\,dx=\int_{\Omega(t_0,t_1)} \vec u \cdot \vec g \,dX+\int_{\omega(t_0)} \vec u \cdot \vec \psi_0\,dx.
\end{equation}
\end{lemma}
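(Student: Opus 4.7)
The plan is to reduce both problems to the forward Dirichlet problem treated in \cite{BHL}, obtain the energy estimates by testing each solution against itself together with the parabolic Sobolev embedding from \cite[\S II.3]{LSU}, and derive the duality identity \eqref{eq4.02un} by using each solution as a test function in the weak formulation of the other.

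First I would establish existence and uniqueness for the forward problem. Since $\vec g\in C^\infty_c(\Omega(t_0,t_1))^m\subset L_2(\Omega(t_0,t_1))^m$ and $\vec\psi_0\in L_2(\omega(t_0))^m$, Brown--Hofmann--Lewis \cite{BHL} applies directly (noting that their space $V_0$ agrees with $\rV^{0,1}_2$) and yields the desired $\vec v$. For the backward (adjoint) problem I would use the time reversal $\vec w(t,x):=\vec u(t_0+t_1-t,x)$, which converts $\sLt\vec u=\vec f$ on $\Omega(t_0,t_1)$ with terminal data $\vec\psi_1$ at $t=t_1$ into a forward strongly parabolic system on the reflected domain, whose coefficients $A^{\beta\alpha}_{ji}(t_0+t_1-t,x)$ inherit the ellipticity \eqref{eqP-02} and boundedness \eqref{eqP-03}. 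The reflected domain is again a time-varying $H_1$ (graph) domain, so \cite{BHL} applies and returns the required unique $\vec u\in\rV^{0,1}_2(\Omega(t_0,t_1))$.

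For the energy estimate \eqref{eq4.01re}, I would plug $\vec\phi=\vec v$ into the weak formulation (justified via Steklov averages since $\vec v$ itself is not admissible), obtaining for each $\tau\in I(\Omega(t_0,t_1))$
\[
\tfrac{1}{2}\int_{\omega(\tau)}\abs{\vec v}^2\,dx+\int_{\Omega(t_0,\tau)} A^{\alpha\beta}_{ij}D_\beta v^j D_\alpha v^i\,dX=\int_{\Omega(t_0,\tau)}\vec g\cdot\vec v\,dX+\tfrac{1}{2}\int_{\omega(t_0)}\abs{\vec\psi_0}^2\,dx.
\]
Ellipticity bounds the gradient term from below by $\nu\int\abs{D\vec v}^2$. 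For the source term I would invoke the embedding $V_2(\Omega(t_0,t_1))\hookrightarrow L_{2+4/d}$ from \cite[\S II.3]{LSU}, whose Hölder dual exponent is precisely $(2d+4)/(d+4)$, and then absorb via Young's inequality. Taking the essential supremum in $\tau$ yields \eqref{eq4.01re}; the same argument applied to $\vec w$ gives the analogous bound for $\vec u$.

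Finally, for \eqref{eq4.02un}, I would test the equation for $\vec v$ against $\vec u$ on the full interval $(t_0,t_1)$ and, symmetrically, the equation for $\vec u$ against $\vec v$. Subtracting the two identities, the bilinear terms cancel because $\widetilde A{}^{\alpha\beta}_{ij}=A^{\beta\alpha}_{ji}$ forces
\[
\int_{\Omega(t_0,t_1)}\!\!A^{\alpha\beta}_{ij}D_\beta v^j D_\alpha u^i\,dX=\int_{\Omega(t_0,t_1)}\!\!\widetilde A{}^{\alpha\beta}_{ij}D_\beta u^j D_\alpha v^i\,dX,
\]
while the time-derivative pairings combine via the distributional product rule $(u^i v^i)_t=u^i_t v^i+u^i v^i_t$ to produce exactly the boundary contributions $\int_{\omega(t_1)}\vec u\cdot\vec v-\int_{\omega(t_0)}\vec u\cdot\vec v$, which cancel against the explicit boundary terms from the two weak formulations. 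What remains is precisely \eqref{eq4.02un}. The main technical obstacle is justifying these cross-testing manipulations since neither $\vec u$ nor $\vec v$ lies in $C^\infty_c$; I would handle this by first applying Steklov averages $[\cdot]_h$ in time, reducing everything to pointwise identities for functions smooth in $t$, and then passing to the limit $h\to 0$ using the $V_2$ bounds already obtained.
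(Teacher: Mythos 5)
Your proposal is correct and follows essentially the same route as the paper, which disposes of this lemma by noting that it follows from Brown et al.\ \cite{BHL} (whose space $V_0$ coincides with $\rV^{0,1}_2$) together with the embedding theorem of \cite[\S II.3]{LSU}; your self-testing energy argument with the dual exponent $(2d+4)/(d+4)$ and the cross-testing/Steklov derivation of \eqref{eq4.02un} simply fill in the standard details the paper leaves implicit. One minor caution: \cite{BHL} is Brown--Hu--Lieberman (not Brown--Hofmann--Lewis) and treats scalar equations, so it extends to strongly parabolic systems ``essentially'' rather than ``directly'' (its energy-method arguments use no maximum principle), which is exactly how the paper phrases the reduction.
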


Let us fix a function $\Phi \in C^\infty_c(\bR^{d+1})$ such that $\Phi$ is supported in $Q_{-}(0,1)$, $0\leq \Phi \leq 2$, and $\int_{\bR^{d+1}} \Phi=1$.
Let $Y=(s,y)\in \Omega$ be fixed but arbitrary.
For $0<\epsilon<d(Y)$, we define
\[
\Phi_\epsilon(X)=\Phi_\epsilon(t,x)=\epsilon^{-d-2}\Phi((t-s)/\epsilon^2,(x-y)/\epsilon).
\]
Fix $t_0\in (-\infty,s-\epsilon^2)$ and let $\vec v=\vec v_{\epsilon, Y, k}$ be a unique weak solution of the problem
\[
\sL \vec v = \Phi_\epsilon \vec e_k\;\text{ in }\;\Omega(t_0,\infty),\quad \vec v=0\;\text{ on }\; S\Omega(t_0,\infty),\quad \vec v=0\;\text{ on }\;\omega(t_0),
\]
where $\vec e_k$ is the $k$-th unit vector in $\bR^m$.
By the uniqueness, we find that $\vec v$ does not depend on the particular choice of $t_0$ and we may extend $\vec v$ to the entire $\Omega$ by setting
\begin{equation}					\label{eq11.54}
\vec v=\vec v_{\epsilon,Y, k} \equiv 0\quad\text{on}\quad \Omega(-\infty,s-\epsilon^2).
\end{equation}
Then $\vec v \in \rV^{0,1}_2(\Omega)$ and satisfies for all $\tau > s$ the identity
\[
\int_{\omega(\tau)} v^i \phi^i\,dx - \int_{\Omega(-\infty,\tau)} v^i\phi^i_t\,dX+\int_{\Omega(-\infty,\tau)} A^{\alpha\beta}_{ij}D_\beta v^j D_\alpha\phi^i\,dX =\int_ {Q_{-}(Y,\epsilon)} \Phi_\epsilon \phi^k\,dX
\]
for all $\vec \phi \in C^\infty_c(\Omega)^m$.
We define the averaged Green's function $\vec G^\epsilon(\cdot,Y)=(G^\epsilon_{jk}(\cdot,Y))_{j,k=1}^m$ of the operator $\sL$ in $\Omega$ by
\[
G^\epsilon_{jk}(\cdot,Y)=v^j=v^j_{\epsilon;Y,k}.
\]
Notice that by Lemma~\ref{lem4.1} and an embedding theorem (see \cite[\S II.3]{LSU}) we obtain
\begin{equation}					\label{eq4.05nc}
\norm{\vec G^\epsilon(\cdot,Y)}_{L_{2+4/d}(\Omega)}\leq N \norm{\vec G^\epsilon(\cdot,Y)}_{V_2(\Omega)} \leq N \norm{\vec \Phi_\epsilon}_{L_{(2d+4)/(d+4)}(\Omega)} \leq N \epsilon^{-d(d+2)/(2d+4)}.
\end{equation}

Next, for any given $\vec f\in C^\infty_c(\Omega)^m$, fix $t_1$ such that $\vec f\equiv 0$ on $\Omega(t_1,\infty)$ and let $\vec u$ be a unique weak solution of the problem
\[
\sLt \vec u = \vec f\;\text{ in }\;\Omega(-\infty,t_1),\quad \vec u=0\;\text{ on }\; S\Omega(-\infty,t_1),\quad \vec u=0\;\text{ on }\;\omega(t_1),
\]
Again, by the uniqueness we may extend $\vec u$ to the entire $\Omega$ by setting $\vec u\equiv 0$ on $\Omega(t_1,\infty)$.
Then, $\vec u\in \rV^{0,1}_2(\Omega)$ and satisfies for all $\tau$ the identity
\[
\int_{\omega(\tau)} u^i \phi^i\,dx + \int_{\Omega(\tau,\infty)} u^i\phi^i_t\,dX+\int_{\Omega(\tau,\infty)} \tilde A^{\alpha\beta}_{ij}D_\beta v^j D_\alpha\phi^i\,dX =\int_{\Omega(\tau,\infty)}  f^i \phi^i\,dX
\]
for all $\vec \phi\in C^\infty_c(\Omega)$.
Also, similar to \eqref{eq4.05nc}, we have
\[
\norm{\vec u}_{L_{2+4/d}(\Omega)}\leq N \norm{\vec f}_{L_{(2d+4)/(d+4)}(\Omega)}.
\]
Now, let $X_0\in \Omega$ and $R< d(X_0)\wedge R_c$ be fixed but arbitrary, and assume that $\vec f$ is supported in $Q_{+}(X_0,R)\subset\Omega$.
By using the condition $\IH$ and following the same argument as in \cite[Section~3.2]{CDK}, we obtain
\begin{equation}					\label{eq2.17}
\norm{\vec u}_{L_\infty(Q_{+}(X_0,R/4))} \leq N R^{2-(d+2)/p} \norm{\vec f}_{L_p(Q_{+}(X_0,R))},\quad \forall p>(d+2)/2.
\end{equation}
If $Q_{-}(Y,\epsilon)\subset Q_{+}(X_0,R/4)$, then \eqref{eq4.02un} together with \eqref{eq2.17} yields
\[
\Abs{\int_{Q_{+}(X_0,R)}\vec G^\epsilon(\cdot,Y) \vec f} \leq \int_{Q_{-}(Y,\epsilon)} \Phi_\epsilon \abs{\vec u}\leq NR^{2-(d+2)/p}\norm{\vec f}_{L_p(Q_{+}(X_0,R))}, \quad \forall p>(d+2)/2.
\end{equation*}
By duality, it follows that if $Q_{-}(Y,\epsilon)\subset Q_{+}(X_0,R/4)$, then we have
\[
\norm{\vec G^\epsilon(\cdot, Y)}_{L_q(Q_{+}(X_0,R))}\leq N R^{-d+(d+2)/q},\quad \forall q\in [1, (d+2)/d).
\]
Then, by following the proof of \cite[Lemma~3.2]{CDK}, we conclude
\begin{equation}					\label{eq11.16p}
\abs{\vec G^\epsilon(X,Y)}\leq N \abs{X-Y}_{\sP}^{-d} ,\quad \forall\epsilon\leq \tfrac{1}{3}\abs{X-Y}_{\sP} \;\text{ if }\;\abs{X-Y}_{\sP}<\tfrac{1}{2}(d(Y)\wedge R_c).
\end{equation}

The following lemma is a consequence of the energy inequality of Brown et al. \cite{BHL}, the above estimate \eqref{eq11.16p}, and an embedding theorem in \cite[\S II.3]{LSU}.
\begin{lemma}						\label{lem4.2}
For $R< \frac{1}{2}(d(Y)\wedge R_c)$, let $\zeta\in C^\infty_c(Q(Y,R))$ be a cut-off function such that $0\leq \zeta \leq 1$ and $\zeta=1$ on $Q(Y,R/2)$.
Then, for all $\epsilon>0$ we have
\[
\norm{(1-\zeta)\vec G^\epsilon(\cdot,Y)}_{V_2(\Omega)} \leq N\bigl(\norm{D\zeta}_{L_\infty}^2+\norm{\zeta_t}_{L_\infty}\bigr)^{1/2}R^{1-d/2}.
\]
In particular, for all $\epsilon>0$ and $R< \frac{1}{2}(d(Y)\wedge R_c)$, we have
\[
\norm{\vec G^\epsilon(\cdot,Y)}_{V_2(\Omega\setminus \overline Q(Y,R))} \leq N R^{-d/2}.
\]
\end{lemma}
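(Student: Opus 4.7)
The plan is to perform a Caccioppoli-type energy argument. Test the weak equation satisfied by $\vec v := \vec G^\epsilon(\cdot, Y)$, namely $\sL \vec v = \Phi_\epsilon \vec e_k$, against the test function $(1-\zeta)^2 \vec v$; this is legitimate after a standard Steklov-averaging argument, since $\vec v \in \rV^{0,1}_2(\Omega)$ vanishes for $t$ sufficiently small. Expanding via the product rule, invoking the strong ellipticity \eqref{eqP-02}, applying Cauchy--Schwarz with parameter, and absorbing the highest-order term into the left-hand side, I obtain the inequality
\begin{equation*}
\norm{(1-\zeta)\vec v}_{V_2(\Omega)}^2 \leq N \int_\Omega \bigl(\abs{D\zeta}^2 + \abs{\zeta_t}\bigr)\abs{\vec v}^2\,dX + N\Abs{\int_\Omega \Phi_\epsilon (1-\zeta)^2 v^k\,dX}.
\end{equation*}

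I then split according to the relative size of $\epsilon$ and $R$. In the case $\epsilon \leq R/6$, $\supp \Phi_\epsilon \subset Q_{-}(Y,\epsilon) \subset Q(Y,R/2)$, where $1-\zeta \equiv 0$, so the source integral vanishes. The supports of $D\zeta$ and $\zeta_t$ lie in $A := Q(Y,R)\setminus Q(Y,R/2)$, where each point satisfies $\abs{X-Y}_\sP \geq R/2 \geq 3\epsilon$ and also $\abs{X-Y}_\sP < \frac{1}{2}(d(Y)\wedge R_c)$ by hypothesis; the pointwise estimate \eqref{eq11.16p} therefore gives $\abs{\vec v(X)} \leq N \abs{X-Y}_\sP^{-d} \leq N R^{-d}$ on $A$. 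Combined with $\abs{A} \leq N R^{d+2}$ this yields $\int_\Omega (\abs{D\zeta}^2 + \abs{\zeta_t})\abs{\vec v}^2 \leq N(\norm{D\zeta}_{L_\infty}^2 + \norm{\zeta_t}_{L_\infty}) R^{2-d}$, which is the claimed bound.

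In the remaining case $\epsilon > R/6$ the Caccioppoli route fails because the source $\Phi_\epsilon$ is no longer disjoint from the support of $1-\zeta$; this is the main obstacle. Here one instead appeals to the global energy inequality of Lemma~\ref{lem4.1} together with $\norm{\Phi_\epsilon}_{L_{(2d+4)/(d+4)}} \leq N \epsilon^{-d/2}$ to get $\norm{\vec v}_{V_2(\Omega)} \leq N \epsilon^{-d/2} \leq N R^{-d/2}$. The parabolic embedding $V_2 \hookrightarrow L_{2+4/d}$ combined with H\"older on $Q(Y,R)$ (using $\abs{Q(Y,R)}^{2/(d+2)} \leq N R^2$) yields $\int_{Q(Y,R)} \abs{\vec v}^2 \leq N R^2 \norm{\vec v}_{V_2(\Omega)}^2 \leq N R^{2-d}$, and expanding $\norm{(1-\zeta)\vec v}_{V_2}^2$ directly produces $\norm{(1-\zeta)\vec v}_{V_2}^2 \leq N R^{-d}(1+\norm{D\zeta}_{L_\infty}^2 R^2)$. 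A short mean-value argument using $\zeta\equiv 1$ on $Q(Y,R/2)$ and $\zeta \equiv 0$ off $Q(Y,R)$ forces $\norm{D\zeta}_{L_\infty}^2 + \norm{\zeta_t}_{L_\infty} \geq c/R^2$, which absorbs the stray $R^{-d}$ into $(\norm{D\zeta}_{L_\infty}^2 + \norm{\zeta_t}_{L_\infty}) R^{2-d}$. The final ``in particular'' statement then follows by selecting a standard cutoff with $\norm{D\zeta}_{L_\infty} \leq N/R$ and $\norm{\zeta_t}_{L_\infty} \leq N/R^2$.
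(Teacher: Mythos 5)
Your argument is correct and is essentially the proof the paper has in mind: the paper omits details and only cites its ingredients (the energy inequality from \cite{BHL}, the pointwise bound \eqref{eq11.16p}, and the embedding theorem of \cite[\S II.3]{LSU}), which are exactly what your Caccioppoli test with $(1-\zeta)^2\vec G^\epsilon(\cdot,Y)$, the annulus estimate, and the large-$\epsilon$ case via Lemma~\ref{lem4.1} supply. Your explicit split at $\epsilon\le R/6$ versus $\epsilon>R/6$ (with the observation $\norm{D\zeta}_{L_\infty}^2+\norm{\zeta_t}_{L_\infty}\gtrsim R^{-2}$) is a correct and welcome elaboration of the ``for all $\epsilon>0$'' claim.
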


The following lemma is an analogue of \cite[Lemma~6.1]{CDK} in time-varying $H^1$ domains, the proof of which is essentially the same.
\begin{lemma}					\label{lem4.2p}
Let $\set{u_k}_{k=1}^\infty$ be a sequence in $V_2(\Omega)$.
If $\sup_k \norm{u_k}_{V_2(\Omega)}\leq N<\infty$, then there exists a subsequence $\set{u_{k_j}}_{j=1}^\infty \subseteq \set{u_k}_{k=1}^\infty$ and $u\in V_2(\Omega)$ with $\norm{u}_{V_2(\Omega)}\leq N$ such that $u_{k_j} \rightharpoonup u$ weakly in  $W^{0,1}_2(\Omega(t_0,t_1))$ for all $-\infty<t_0<t_1<\infty$.
Moreover, if all $u_k$ vanish on $S\Omega$, then $u$ also vanishes on $S\Omega$.
\end{lemma}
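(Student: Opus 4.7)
My plan is to extract a weakly convergent subsequence by a diagonal argument on an exhausting family of finite slabs, then verify the $V_2$-norm bound slice-by-slice via Lebesgue differentiation, and finally establish the vanishing on $S\Omega$ via zero extension.

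For the extraction I would set $\Omega_n := \Omega(-n,n)$ and observe that the hypothesis $\norm{u_k}_{V_2(\Omega)}\leq N$ forces
\[
\norm{Du_k}_{L_2(\Omega_n)}\leq N \quad\text{and}\quad \norm{u_k}_{L_2(\Omega_n)}^2 \leq 2nN^2,
\]
so $\{u_k\}$ is bounded in the reflexive Hilbert space $W^{0,1}_2(\Omega_n)$. Extracting weakly convergent subsequences nested over $n$ and taking the diagonal yields a single subsequence $\{u_{k_j}\}$ that converges weakly in $W^{0,1}_2(\Omega_n)$ for every $n$, and hence in $W^{0,1}_2(\Omega(t_0,t_1))$ for every finite $t_0<t_1$; I denote the common limit by $u$.

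For the norm bound, weak lower semicontinuity of $\norm{D\cdot}_{L_2}$ applied on each $\Omega_n$, followed by monotone convergence as $n\to\infty$, yields $\norm{Du}_{L_2(\Omega)}\leq N$. For the essential supremum term I fix $t_0\in I(\Omega)$ and $\epsilon>0$; weak lower semicontinuity of the $L_2$-norm on $\Omega(t_0-\epsilon,t_0+\epsilon)$ gives
\[
\int_{t_0-\epsilon}^{t_0+\epsilon}\!\int_{\omega(t)} u^2 \,dx\,dt \;\leq\; \liminf_{j\to\infty}\int_{t_0-\epsilon}^{t_0+\epsilon}\!\int_{\omega(t)} u_{k_j}^2\,dx\,dt \;\leq\; 2\epsilon N^2.
\]
Dividing by $2\epsilon$ and sending $\epsilon\to 0$, the Lebesgue differentiation theorem applied to the locally integrable map $t\mapsto \int_{\omega(t)} u^2\,dx$ produces $\int_{\omega(t_0)} u(t_0,\cdot)^2\,dx \leq N^2$ for a.e.\ $t_0\in I(\Omega)$, so $u\in V_2(\Omega)$ with $\norm{u}_{V_2(\Omega)}\leq N$.

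For the vanishing property I would extend each $u_k$ by zero across $S\Omega$ to obtain $\tilde u_k$ on $\bR\times\bR^d$. Because $u_k$ vanishes on $S\Omega$ and $S\Omega$ is locally a time-varying Lipschitz graph in $x^d$, one has $\tilde u_k\in W^{0,1}_{2,\mathrm{loc}}(\bR\times\bR^d)$ with $D\tilde u_k=\chi_\Omega Du_k$, and these extensions are uniformly bounded in $W^{0,1}_2$ on every slab. Applying the diagonal extraction to $\{\tilde u_k\}$ (passing to a further subsequence if necessary) produces a weak limit $\tilde u$ supported in $\overline\Omega$ with $\tilde u|_\Omega = u$, so the zero extension of $u$ is itself locally Sobolev, and by the standard characterization of $\rV^{0,1}_2(\Omega)$ for Lipschitz boundaries this forces $u$ to vanish on $S\Omega$. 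The main obstacle in the whole argument is the non-reflexivity of $V_2$ coming from the essential-supremum term; this is circumvented by the Lebesgue-differentiation step, which extracts the slice bound directly from weak $L_2$-convergence on slabs rather than requiring weak compactness in $V_2$ itself.
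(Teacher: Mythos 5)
Your extraction and norm-bound steps are essentially sound: the diagonal argument over the slabs $\Omega(-n,n)$, the consistency of the limits, and the Lebesgue-point device for the slice bound are all fine. One quantitative slip: bounding the gradient term and the $\esssup$ term separately only gives $\norm{u}_{V_2(\Omega)}^2\le N^2+N^2$, i.e. $\norm{u}_{V_2(\Omega)}\le \sqrt2\,N$, not $\le N$ as the lemma asserts. To recover the stated constant, apply weak lower semicontinuity to the single convex functional $v\mapsto \int_{\Omega(-n,n)}\abs{Dv}^2+(2\epsilon)^{-1}\int_{t_0-\epsilon}^{t_0+\epsilon}\int_{\omega(t)}\abs{v}^2$, which is $\le N^2$ along $u_{k_j}$ because the slab average is dominated by the $\esssup$; then let $n\to\infty$ and $\epsilon\to0$ at Lebesgue points of $t\mapsto\int_{\omega(t)}u^2$ to get $\int_\Omega\abs{Du}^2+\int_{\omega(t_0)}u^2\le N^2$ for a.e.\ $t_0$. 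This is minor and fixable.

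The genuine gap is the last step. In this paper, ``$u$ vanishes on $S\Omega$'' means that $u$ is a limit \emph{in the $V_2(\Omega)$ norm}, which contains $\esssup_t\norm{u(t,\cdot)}_{L_2(\omega(t))}$, of functions in $C^\infty_c(\overline\Omega\setminus S\Omega)$. The property you actually establish, namely that the zero extension $\tilde u$ is locally in $W^{0,1}_2$ with $D\tilde u=\chi_\Omega Du$, is blind to that $\esssup$-in-$t$ component and cannot by itself yield the conclusion: take $\Omega=\bR\times D$ with $D$ bounded Lipschitz (a legitimate time-varying $H_1$ domain), $u_k(t,x)=\varphi(x)\psi_k(t)\eta(t)$ with $\varphi\in C^\infty_c(D)$, $\eta$ a time cutoff, and $\psi_k$ smooth, increasing from $0$ to $1$ on $[0,1/k]$; these belong to $C^\infty_c(\overline\Omega\setminus S\Omega)$, are bounded in $V_2(\Omega)$, and converge on every slab (even strongly in $W^{0,1}_2$) to $u=\varphi(x)\chi_{\set{t>0}}\eta(t)$, whose zero extension is perfectly spatially Sobolev, yet $u$ cannot be a $V_2$-limit of smooth functions since any such limit agrees a.e.\ with an $L_2$-valued map continuous in $t$. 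So the ``standard characterization of $\rV^{0,1}_2(\Omega)$ for Lipschitz boundaries'' you invoke is not available off the shelf: the elliptic-type characterization addresses only the $W^{0,1}_2$ part of the norm, and even that localized statement for time-varying $H_1$ graphs (push the support inward by translating in the $x^d$ direction transverse to the graph, mollify, and patch with a boundary covering) is precisely the content that would have to be written out --- it is the substance of the ``moreover'' clause, not a citation. To repair the step you should either establish the vanishing in the localized sense, e.g.\ by observing that on each slab the strong $W^{0,1}_2(\Omega(t_0,t_1))$-closure of restrictions of $C^\infty_c(\overline\Omega\setminus S\Omega)$ is a closed, hence weakly closed, subspace containing every $u_{k_j}$, and then invoke the extra time regularity available where the lemma is applied to upgrade to the paper's $V_2$-norm notion, or carry out the translation--mollification argument explicitly; as written, the decisive claim is asserted rather than proved. (For comparison, the paper itself gives no details here either: it refers to the proof of Lemma~6.1 of \cite{CDK}.)
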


The above two lemmas contain all ingredients needed for the construction of a Green's function.
By following the argument in \cite[Section~4.2]{CDK} verbatim, we construct the Green's function $\vec G(\cdot,Y)$ from $\vec G^\epsilon(\cdot,Y)$, and it is readily seen that $\vec G(\cdot,Y)\in C^{\mu_0/2,\mu_0}_{loc}(\Omega\setminus \set{Y})$ satisfies \eqref{eq3.00ow} as well as the estimates \textit{i) -- vi)}.
The estimate \textit{vii)} does not appear explicitly in \cite{CDK} but it easily follows from the estimates \textit{vi)} and the condition $\IH$; see \cite[\S 3.6]{HK07}.

Also, fix a function $\Psi \in C^\infty_c(\bR^{d+1})$ such that $\Psi$ is supported in $Q_{+}(0,1)$, $0\leq \Psi \leq 2$, and $\int_{\bR^{d+1}} \Psi=1$.
For $0<\epsilon<d(Y)$, where $Y=(s,y)\in \Omega$ be fixed but arbitrary, we set
\[
\Psi_\epsilon(X)=\Psi_\epsilon(t,x)=\epsilon^{-d-2}\Psi((t-s)/\epsilon^2,(x-y)/\epsilon).
\]
Fix $t_1\in (t+\epsilon^2,\infty)$ and let $\vec w=\vec w_{\epsilon, Y, k}$ be a unique weak solution of the problem
\[
\sLt \vec w = \Psi_\epsilon \vec e_k\;\text{ in }\;\Omega(-\infty,t_1),\quad \vec w=0\;\text{ on }\; S\Omega(-\infty,t_1),\quad \vec w=0\;\text{ on }\;\omega(t_1),
\]
Then, as before, we may extend $\vec w$ to the entire $\Omega$ by setting $\vec w \equiv 0$ on $\Omega(t+\epsilon^2,\infty)$ so that $\vec w$ belongs to $\rV^{1,0}_2(\Omega)$ and satisfies for all $\tau<t$ the identity
\begin{equation}					\label{eq2.241p}
\int_{\omega(\tau)} w^i \phi^i\,dx +\int_{\Omega(\tau,\infty)} w^i \phi^i_t\,dX +\int_{\Omega(\tau,\infty)} \tilde A^{\alpha\beta}_{ij} D_\beta w^j D_\alpha \phi^i\,dX = \int_{Q_{+}(X,\epsilon)} \Psi_\epsilon \phi^k \,dX
\end{equation}
for all $\vec \phi \in C^\infty_{c}(\Omega)^m$.
We define the averaged Green's function $\tilde{\vec G}{}^\epsilon(\cdot,Y)=(\tilde G{}^\epsilon_{jk}(\cdot,Y))_{j,k=1}^m$ of the adjoint operator $\sLt$ in $\Omega$ by
\[
\tilde G{}^\epsilon_{jk}(\cdot,Y)=w^j=w^j_{\epsilon,Y,k}.
\]
Then by a similar argument, we construct a Green's function $\tilde{\vec G}(\cdot,Y)$ from $\tilde{\vec G}{}^\epsilon(\cdot,Y)$, which belongs to $C^{\mu_0/2,\mu_0}_{loc}(\Omega\setminus\set{Y})$ and satisfies \eqref{eq3.02wb} and the estimates \textit{i) -- vi)}.
Moreover, by following \cite[Lemma~3.5]{CDK}, we obtain the identity \eqref{eq3.01mq}.

Next, we shall prove the identity \eqref{eq5.24}.
Let $\vec \psi_0\in L_2(\omega(s_0))^m$ be given and let $\vec u$ be a unique weak solution of the problem \eqref{eq5.88}.
Fix $X=(t,x)\in\Omega(s_0,\infty)$ and let $\vec w=\vec w_{\epsilon,X,k}$ be as constructed above.
By Lemma~\ref{lem4.1}, for $\epsilon$ sufficiently small, we have
\begin{equation}  \label{eqn:3.56}
\int_{Q_{+}(X,\epsilon)} \Psi_\epsilon  u^k \,dY= \int_{\omega(s_0)} \vec w_{\epsilon,X,k}\cdot \vec \psi_0\,dy.
\end{equation}
For $\vec \psi_0 \in C^\infty_c(\omega(s_0))$, it can be easily seen that (see \cite[Section~3.5]{CDK})
\[
\lim_{\epsilon\to 0}\int_{\omega(s_0)}\vec w_{\epsilon, X, k}  \cdot \vec \psi_0\,dy =\int_{\omega(s_0)} \tilde{\vec G}(\cdot , X)\vec e_k \cdot \vec \psi_0 \,dy,
\]
Since the condition $\IH$ implies that $\vec u$ is continuous at $X$, by taking the limit $\epsilon\to 0$ in \eqref{eqn:3.56} and using \eqref{eq3.01mq}, we obtain
\[
u^k(X)= \int_{\omega(s_0)} \tilde G_{ik}(s_0,y,t,x) \psi_0^i(y)\,dy=\int_{\omega(s_0)} G_{ki}(t,x,s_0,y) \psi_0^i(y)\,dy.
\]
We have thus derived \eqref{eq5.24} under an assumption that $\vec \psi_0\in C^\infty_c(\omega(s_0))$.
For $\vec \psi_0\in L_2(\omega(s_0))^m$, let $\set{\vec \psi_j}_{j=1}^\infty$ be a sequence in $C^\infty_c(\omega(s_0))^m$ such that $\vec \psi_j \to \vec \psi_0$ in $L_2(\omega(s_0))$.
Let $\vec u_j$ be a unique weak solution of the problem \eqref{eq5.88} with $\vec \psi_0$ replaced by $\vec \psi_j$.
Then by Lemma~\ref{lem4.1}, we find that $\lim_{j\to\infty}\norm{\vec u_j-\vec u}_{V_2(\Omega(s_0,t))}=0$ and by the condition $\IH$ and \eqref{eq3.10np} we have $\lim_{j\to\infty}\abs{\vec u_j(X)-\vec u(X)}=0$.
On the other hand, by the estimate \textit{i)} applied to $\tilde{\vec G}(\cdot,X)$ together with the identity \eqref{eq3.01mq}, we find that $\norm{\vec G(t,x,s_0,\cdot)}_{L_2(\omega(s_0))}<\infty$, and thus we get
\[
\lim_{j\to\infty} \int_{\omega(s_0)}\vec G(t,x,s_0,y)\vec \psi_j(y)\,dy=\int_{\omega(s_0)}\vec G(t,x,s_0,y) \vec \psi_0(y)\,dy.
\]
This completes the proof of  \eqref{eq5.24}.
Similarly, for $\vec \psi_1\in L_2(\omega(t_1))^m$, let $\vec u$ be a unique weak solution of the problem
\[
\sLt \vec v=0\;\text{ in }\;\Omega(-\infty,t_1),\quad \vec v=0\;\text{ on }\; S\Omega(-\infty,t_1),\quad \vec v=\vec\psi_1\;\text{ on }\;\omega(t_1).
\]
Then as above, $\vec v$ has the following representation:
\[
\vec v(s,y)=\int_{\omega(t_1)} \tilde{\vec G}(s,y,t_1,x)\vec \psi_1(x)\,dx.
\]

It only remains us to prove \eqref{eq5.23}.
We proceed similar to \cite[Section~4.4]{CDK}.
The following lemma is another simple consequence of Brown et al. \cite{BHL}.
\begin{lemma}					\label{lem4.3}
Let $\eta=\eta(x)\in C^1(\bR^d)$ be a bounded nonnegative function.
Assume that $\vec u\in \rV^{0,1}_2(\Omega(s_0,\infty))$ is the weak solution of the problem \eqref{eq5.88} and define
\[
I(t)=\frac{1}{2} \int_{\omega(t)} \eta(x) \abs{\vec u(t,x)}^2\,dx,\quad t\in (s_0,\infty).
\]
Then $I(t)$ is absolutely continuous and satisfies a.e. $t>s_0$ the identity
\[
I'(t)=-\int_{\omega(t)} A^{\alpha\beta}_{ij}D_\beta u^j D_\alpha(\eta u^i).
\]
\end{lemma}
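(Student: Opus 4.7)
The plan is to justify the formal Leibniz-rule computation
$$\frac{d}{dt}\Bigl(\tfrac12\int_{\omega(t)}\eta\abs{\vec u}^2\,dx\Bigr) = \int_{\omega(t)}\eta\, u^i u^i_t\,dx,$$
and combine it with the weak equation tested against $\eta\vec u$ (formally,
$\int\eta u^i u^i_t + \int A^{\alpha\beta}_{ij}D_\beta u^j D_\alpha(\eta u^i) = 0$)
to produce the stated identity. The obstruction is that $\vec u\in\rV^{0,1}_2(\Omega(s_0,\infty))$ has no pointwise $t$-derivative, and the spatial slice $\omega(t)$ itself depends on $t$, so $\eta\vec u$ cannot simply be plugged in as a smooth compactly supported test function.

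I would first use a Steklov-type average in $t$: for small $h>0$ set $\vec u_h(t,x)=h^{-1}\int_t^{t+h}\vec u(\tau,x)\,d\tau$, with the natural extension past $t=s_0$ via the initial datum $\vec\psi_0$. Because $\eta=\eta(x)$ depends only on $x$, multiplying by $\eta$ preserves the vanishing of $\vec u$ on $S\Omega(s_0,\infty)$, so $\eta\vec u_h$ lies in the class of admissible test functions used in the Brown--Hofmann--Lewis setup for the space $V_0(\Omega(s_0,\infty))=\rV^{0,1}_2(\Omega(s_0,\infty))$. The averaged function has a strong $L_2$ time derivative and satisfies the weak equation in an averaged sense, so on an interval $(t_1,t_2)\subset(s_0,\infty)$ of Lebesgue points I obtain
$$\tfrac12\int_{\omega(t_2)}\eta\abs{\vec u_h}^2\,dx - \tfrac12\int_{\omega(t_1)}\eta\abs{\vec u_h}^2\,dx = -\int_{t_1}^{t_2}\!\!\int_{\omega(t)}\bigl[A^{\alpha\beta}_{ij}D_\beta u^j D_\alpha(\eta u^i)\bigr]_h\,dx\,dt + o_h(1),$$
where the boundary term from integration by parts in $t$ on the time-varying cylinder vanishes because of the vanishing of $\vec u_h$ on $S\Omega$; this is precisely the kind of integration-by-parts identity that [BHL] establishes for time-varying $H_1$ domains, and which is the only nontrivial ingredient of the proof.

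Next I would send $h\to 0^+$. The strong $L_2$ convergence of $\vec u_h$ and $D\vec u_h$ to $\vec u$ and $D\vec u$, together with the BHL trace continuity $t\mapsto \vec u(t,\cdot)\in L_2(\omega(t))$ (valid at almost every $t$ and in an $L_2$-in-$t$ sense), passes the identity to the limit and yields
$$\tfrac12\int_{\omega(t_2)}\eta\abs{\vec u}^2\,dx - \tfrac12\int_{\omega(t_1)}\eta\abs{\vec u}^2\,dx = -\int_{t_1}^{t_2}\!\!\int_{\omega(t)}A^{\alpha\beta}_{ij}D_\beta u^j D_\alpha(\eta u^i)\,dx\,dt$$
for a.e.\ $s_0<t_1<t_2$. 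Since the integrand on the right, viewed as a function of $t$ alone, belongs to $L_1(s_0,\infty)$ by the $V_2$-bound on $\vec u$ together with boundedness of $\eta$ and $D\eta$, the fundamental theorem of calculus for Lebesgue integrals shows that $I(t)$ is absolutely continuous on $(s_0,\infty)$ and differentiates to the claimed formula almost everywhere.

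The main obstacle is not the algebra but the careful handling of traces and of integration by parts in $t$ on the time-varying domain $\Omega(s_0,\infty)$; the whole point of invoking [BHL] as a black box here is to avoid re-proving these facts. Once those are granted, the proof reduces to the standard Steklov-averaging argument for the energy identity.
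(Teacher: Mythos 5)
The paper offers no argument for this lemma beyond calling it another simple consequence of Brown, Hu, and Lieberman \cite{BHL}, and your Steklov-averaging sketch, which delegates the trace continuity and the integration by parts in $t$ on the time-varying domain to \cite{BHL}, is essentially that same approach written out in the standard way. Two cautions: the admissibility of $\eta\vec u_h$ does not follow from $\eta$ being time-independent, because the Steklov average of a function vanishing on the moving lateral boundary $S\Omega$ need not itself vanish there (the average at a boundary point $(t,x)$ involves values $\vec u(\tau,x)$ with $x\in\omega(\tau)$ for $\tau>t$), so this admissibility, or a substitute such as approximating $\vec u$ by elements of $C^\infty_c(\overline{\Omega}\setminus S\Omega)$ in the $\rV^{0,1}_2$ sense, is itself part of what must be extracted from the \cite{BHL} framework rather than asserted; and the $V_2$ bound only makes the right-hand integrand locally integrable in $t$ (it controls $\esssup_t\norm{\vec u(t,\cdot)}_{L_2(\omega(t))}$, not $\norm{\vec u}_{L_2(\Omega(s_0,\infty))}$), which gives local absolute continuity of $I$ on $(s_0,\infty)$ --- all the lemma needs. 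Also, \cite{BHL} is Brown--Hu--Lieberman, not Brown--Hofmann--Lewis.
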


The following lemmas are key ingredients to prove \eqref{eq5.23} and adapted from \cite{CDK}.
\begin{lemma}					\label{lem4.4}
Assume that $\vec \psi_0\in L_2(\omega(s_0))^m$ is supported in a closed set $F\subset\overline{\omega(s_0)}$ and let $\vec u$ be the weak solution of the problem \eqref{eq5.88}.
Then, we have
\begin{equation}					\label{gaffney}
\int_E \abs{\vec u(t,x)}^2\,dx \leq e^{-\gamma\dist(E,F)^2/(t- s_0)} \int_F \abs{\vec \psi_0(x)}^2\,dx,\quad\forall E\subset \omega(t),
\end{equation}
where $\dist(E,F)=\inf\set{\abs{x-y}\colon\, x \in E,\, y \in F}$ and $\gamma=\gamma(\nu)>0$.
\end{lemma}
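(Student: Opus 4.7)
The plan is to carry out a Davies-style exponential weight (Gaffney) argument, combined with the energy identity in Lemma~\ref{lem4.3}. Choose a Lipschitz function $\phi=\phi(x)$ on $\bR^d$ with $\abs{D\phi}\leq 1$, $\phi\leq 0$ on $F$, and $\phi\geq \dist(E,F)$ on $E$; the basic choice is $\phi(x)=\dist(x,F)\wedge\dist(E,F)$, which is bounded so that $\eta_\lambda(x):=e^{2\lambda\phi(x)}$ is bounded on $\bR^d$ for each $\lambda>0$. Before applying Lemma~\ref{lem4.3} I first regularize $\phi$ by mollification to obtain a $C^1$ weight with the same essential properties (bounded, $\abs{D\phi}\leq 1+o(1)$) and then pass to the limit at the end; this is the only technical nuisance in the proof.

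With $\eta=\eta_\lambda$, apply Lemma~\ref{lem4.3} to get
\[
I(t)=\tfrac12\int_{\omega(t)}\eta_\lambda\,\abs{\vec u}^2\,dx,\qquad I'(t)=-\int_{\omega(t)}A^{\alpha\beta}_{ij}D_\beta u^j\,D_\alpha(\eta_\lambda u^i).
\]
Expanding $D_\alpha(\eta_\lambda u^i)=\eta_\lambda D_\alpha u^i+2\lambda \eta_\lambda (D_\alpha\phi)u^i$ and using strong ellipticity \eqref{eqP-02} on the quadratic piece and the bound \eqref{eqP-03} together with $\abs{D\phi}\leq 1$ on the cross term, one obtains
\[
I'(t)\leq -\nu\int\eta_\lambda\,\abs{D\vec u}^2\,dx+2\lambda\nu^{-1}\int\eta_\lambda\,\abs{\vec u}\,\abs{D\vec u}\,dx.
\]
A Cauchy--Schwarz/Young step absorbs the gradient term and leaves the differential inequality $I'(t)\leq C\lambda^2\nu^{-3}\,I(t)$, so Grönwall yields $I(t)\leq I(s_0)\,e^{C\lambda^2(t-s_0)/\nu^3}$.

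Finally, from the bounds $\phi\geq \dist(E,F)$ on $E$ and $\phi\leq 0$ on $F$ together with $\supp\vec \psi_0\subset F$,
\[
e^{2\lambda \dist(E,F)}\int_E\abs{\vec u(t,x)}^2\,dx\leq 2I(t)\leq e^{C\lambda^2(t-s_0)/\nu^3}\int_F\abs{\vec \psi_0}^2\,dx.
\]
Optimizing in $\lambda>0$ (the choice $\lambda=\nu^3\dist(E,F)/(C(t-s_0))$ minimizes the exponent $-2\lambda\dist(E,F)+C\lambda^2(t-s_0)/\nu^3$) produces the exponential factor $e^{-\gamma\dist(E,F)^2/(t-s_0)}$ with $\gamma=\gamma(\nu)>0$, establishing \eqref{gaffney}. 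The only delicate step is arranging the weight to be admissible in Lemma~\ref{lem4.3}; once that is done the computation is a standard Davies perturbation argument.
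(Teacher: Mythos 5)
Your proposal is correct and follows essentially the same route as the paper: the Davies--Gaffney exponential weight $e^{2\lambda\phi}$ with $\phi(x)=\dist(x,F)\wedge\dist(E,F)$, the energy identity of Lemma~\ref{lem4.3}, ellipticity plus Young's inequality to get $I'(t)\leq N\lambda^2\nu^{-3}I(t)$, Gr\"onwall, and optimization in the parameter (the paper's $K$ plays the role of your $\lambda$). The only cosmetic difference is that you mollify the Lipschitz weight before invoking Lemma~\ref{lem4.3}, whereas the paper applies the lemma to a $C^1$ weight and then passes to Lipschitz $\phi$ by a standard approximation; these are the same fix.
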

\begin{proof}
We may assume that $\dist(E,F)>0$; otherwise \eqref{gaffney} is an immediate consequence of the energy inequality \eqref{eq4.01re}.
Let $\phi=\phi(x)$ be a bounded $C^1$ function on $\bR^d$ satisfying $\abs{D\phi}\leq K$ for some $K>0$ to be fixed later.
Define
\[
I(t)=\int_{\omega(t)} e^{2\phi(x)}\abs{\vec u(t,x)}^2\,dx,\quad t>s_0.
\]
By Lemma~\ref{lem4.3}, we find that $I'(t)$ satisfies for a.e. $t>s_0$
\begin{align*}
I'(t) &=-2\int_{\omega(t)} e^{2\phi}A^{\alpha\beta}_{ij}D_\beta u^j D_\alpha u^i\,dx -4\int_{\omega(t)} e^{2\phi}A^{\alpha\beta}_{ij}D_\beta u^j D_\alpha\phi\,u^i \,dx\\
&\leq -2\nu \int_{\omega(t)} e^{2\phi}\abs{D\vec u}^2\,dx+4(K/\nu) \int_{\omega(t)} e^\phi\abs{D\vec u} e^\phi\abs{\vec u}\,dx\\
&\leq(2/\nu^3) K^2 \int_{\omega(t)} e^{2\phi}\abs{\vec u}^2 =(2/\nu^3) K^2 I(t).
\end{align*}
The above differential inequality yields
\begin{equation} 					\label{eqx001}
I(t) \leq e^{(2/\nu^3) K^2(t-s_0)} \norm{e^\phi \vec \psi_0}_{L_2(F)}^2,\quad \forall t\geq s_0.
\end{equation}
Notice that by a standard approximation, we may assume that $\phi$ is a bounded Lipschitz continuous function satisfying $\abs{D \phi}\leq K$ a.e.
Since $F$ is a closed set, the function
\[
\dist(x, F)=\inf\set{\abs{x-y}\colon\, y \in F}
\]
is a Lipschitz function on $\bR^d$ with Lipschitz constant $1$ and $\dist(E,F)=\inf_{x\in E} \dist(x,F)$.
Therefore, if we set $\phi(x)=K(\dist(x,F)\wedge \dist(E,F))$, then by \eqref{eqx001}, we get
\[
\int_E \abs{\vec u(t,x)}^2\,dx \leq \exp\Set{(2/\nu^3) K^2 (t-s_0)-2K\dist(E,F)} \int_F \abs{\vec \psi_0(x)}^2\,dx.
\]
The lemma follows if we set $K=\dist(E,F)/\set{(2/\nu^3)(t-s_0)}$.
\end{proof}

\begin{lemma}  \label{lem4.5}
Let $\vec u$ be the weak solution of the problem \eqref{eq5.88}, where $\vec \psi_0 \in L_\infty(\omega(s_0))$ and has a compact support in $\omega(s_0)$.
Denote
\begin{equation}					\label{eq4.27hs}
\varrho=\varrho(x)=\dist(x,\partial\omega(s_0))\wedge R_c,\quad x\in \omega(s_0)
\end{equation}
Then for all $x\in\omega(s_0)$, we have
\[
\abs{\vec u(t,x)}\leq N \norm{\vec\psi_0}_{L_\infty(\omega(s_0))}\;\text{ whenever }\;0<t-s_0<(1\wedge M^{-2})\varrho^2(x)/4,
\]
where $N=N(d,m,\nu, \mu_0,C_0)>0$.
\end{lemma}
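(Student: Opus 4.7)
The plan is to localize the initial datum $\vec\psi_0$ near $x$, treat the nearby piece by the interior $L_\infty$ estimate \eqref{eq3.10np}, and control the contribution of the distant pieces by the Gaffney-type decay of Lemma~\ref{lem4.4}. Fix $X=(t,x)\in\Omega(s_0,\infty)$ with $0<t-s_0<(1\wedge M^{-2})\varrho(x)^2/4$, and set $R=\sqrt{t-s_0}$. Thanks to our choice of the time window and the condition \eqref{eq2.01dh} on the defining functions of $\partial\Omega$, the boundary $\partial\omega(\tau)$ stays outside $B(x,\varrho(x)/2)$ for every $\tau\in[s_0,t]$, so the parabolic cylinder $Q_{-}(X,R)$ is contained in $\Omega(s_0,\infty)$ and satisfies $R<R_c\wedge d^{-}(X)$. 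Consequently Remark~\ref{rmk2.6} applies in $Q_{-}(X,R)$.

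Next I would split $\vec\psi_0$ dyadically around $x$. Set $B_0=B(x,R)\cap\omega(s_0)$ and, for $k\geq 1$, $A_k=(B(x,2^k R)\setminus B(x,2^{k-1}R))\cap\omega(s_0)$. Let $\vec u_k$ ($k\geq 0$) be the weak solution of \eqref{eq5.88} with initial datum $\vec\psi_0\chi_{B_0}$ (respectively $\vec\psi_0\chi_{A_k}$); by Lemma~\ref{lem4.1} and uniqueness, $\vec u=\sum_{k\geq 0}\vec u_k$. Apply \eqref{eq3.10np} to each $\vec u_k$ in $Q_{-}(X,R)$ to obtain
\[
\abs{\vec u_k(X)}\leq NR^{-(d+2)/2}\norm{\vec u_k}_{L_2(Q_{-}(X,R/2))},
\]
after a harmless rescaling of the cylinder. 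Using Lemma~\ref{lem4.3} with $\eta\equiv 1$ (the standard energy identity) together with the $L^\infty$ bound on $\vec\psi_0$ gives, for every $\tau\in(s_0,t]$,
\[
\int_{\omega(\tau)}\abs{\vec u_0(\tau,\cdot)}^2\,dx\leq \norm{\vec\psi_0\chi_{B_0}}_{L_2}^2\leq N R^d\norm{\vec\psi_0}_\infty^2,
\]
whence $\norm{\vec u_0}_{L_2(Q_{-}(X,R/2))}^2\leq NR^{d+2}\norm{\vec\psi_0}_\infty^2$ and therefore $\abs{\vec u_0(X)}\leq N\norm{\vec\psi_0}_\infty$.

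For $k\geq 1$ I would invoke Lemma~\ref{lem4.4} with $E=B(x,R/2)\cap\omega(\tau)$ and $F=A_k$. Since $\dist(E,F)\geq 2^{k-2}R$ and $\tau-s_0\leq R^2$, Lemma~\ref{lem4.4} yields
\[
\int_{B(x,R/2)\cap\omega(\tau)}\abs{\vec u_k(\tau,\cdot)}^2\,dx\leq e^{-\gamma 4^{k-2}}\int_{A_k}\abs{\vec\psi_0}^2\,dx\leq N(2^k R)^d e^{-\gamma 4^{k-2}}\norm{\vec\psi_0}_\infty^2.
\]
Integrating in $\tau$ over $[t-R^2/4,t]$ and plugging into the interior $L_\infty$ estimate gives
\[
\abs{\vec u_k(X)}\leq NR^{-(d+2)/2}\bigl(R^2\cdot(2^kR)^d e^{-\gamma 4^{k-2}}\bigr)^{1/2}\norm{\vec\psi_0}_\infty=N2^{kd/2}e^{-\gamma 4^{k-2}/2}\norm{\vec\psi_0}_\infty.
\]
Summing the convergent series $\sum_{k\geq 1}2^{kd/2}e^{-\gamma 4^{k-2}/2}$ and combining with the $k=0$ bound yields the desired estimate $\abs{\vec u(X)}\leq N\norm{\vec\psi_0}_\infty$.

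The main obstacle I anticipate is the bookkeeping that ensures $Q_{-}(X,R)$ lies in the interior region where Remark~\ref{rmk2.6} applies: this is where the time-varying nature of $\Omega$ enters, and the constraint $t-s_0<(1\wedge M^{-2})\varrho(x)^2/4$ is tailored precisely so that the motion of $\partial\omega(\tau)$ (bounded by $M\sqrt{t-s_0}$ in the $x^d$ direction) does not cut into the cylinder. Once this geometric step is made precise, the decomposition-plus-Gaffney argument is routine.
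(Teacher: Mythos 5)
Your argument is correct and is essentially the paper's own proof: the same dyadic decomposition of $\vec\psi_0$ around $x$ with $R=\sqrt{t-s_0}$, the Gaffney-type decay of Lemma~\ref{lem4.4} for the far annuli, and the interior $L_\infty$ estimate \eqref{eq3.10np} from condition $\IH$ to evaluate at $X$, summed over $k$. The only differences are cosmetic (you take $E=B(x,R/2)\cap\omega(\tau)$ rather than $B(x,R)$ and handle the $k=0$ piece via the energy identity, where the energy inequality \eqref{eq4.01re} suffices), and your geometric containment step is exactly the paper's choice $r=\varrho(x)/2$, $\delta=(r/M)^2$ ensuring $(s_0-\delta,s_0+\delta)\times B(x,r)\subset\subset\Omega$.
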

\begin{proof}
For $x\in \omega(s_0)$, set $r=\varrho(x)/2$ and $\delta=(r/M)^2$ so that
\[
(s_0-\delta, s_0+\delta)\times B(x,r) \subset\subset \Omega.
\]
For any $t$ satisfying $0<t-s_0<\delta\wedge r^2=(1\wedge M^{-2})\varrho^2(x)/4$, set $R=\sqrt{t-s_0}\,$ and denote
\[
A_0=B(x,R);\quad A_k=\set{y\in\omega(s_0)\colon\, 2^{k-1}R \leq \abs{y-x}<2^k R},\quad k=1,2,\ldots.
\]
Since $\vec \psi_0$ is compactly supported in $\omega(s_0)$, we have $\vec \psi_0=\sum_{k=0}^{k_0}\chi_{A_k} \vec \psi_0 $ for some $k_0<\infty$.
For $k=0,1,\ldots, k_0$, we define
\[
\vec u_k(t,x)=\int_{A_k} \vec G(t,x,s_0,y)\vec \psi_0(y)\,dy.
\]
Then, it follows from \eqref{eq5.24} that $\vec u=\sum_{k=0}^{k_0} \vec u_k$ and that each $\vec{u}_k$ is the weak solution of the problem \eqref{eq5.88} with $\chi_{A_k} \vec \psi_0$ in place of $\vec \psi_0$.
We apply Lemma~\ref{lem4.1} to $\vec u_k$ with $E=B(x,R)$ and $F=\overline A_k$ for $k=1,2,\ldots$, to obtain that 
\[
\int_{B(x,R)}\abs{\vec u_k(s,y)}^2\,dy \leq N e^{-\gamma(2^{k-1}-1)}2^{kd} R^d\norm{\vec \psi_0}_{L_\infty(\omega(s_0))}^2, \quad\forall s\in (s_0,t).
\]
Therefore, by the condition $\IH$ and \eqref{eq3.10np}, we get
\[
\abs{\vec u(t,x)} \leq \sum_{k=0}^{k_0} \abs{\vec u_k(t,x)} \leq N \left(1+\sum_{k=1}^\infty e^{-\gamma (2^{k-1}-1)}2^{kd/2}\right) \norm{\vec \psi_0}_{L_\infty(\omega(s_0))} \leq N \norm{\vec \psi_0}_{L_\infty(\omega(s_0))}.
\]
The lemma is proved.
\end{proof}

\begin{lemma}					\label{lem4.6}
Let $\eta\in C^\infty_c(B(x_0,4r))$ be a function satisfying
\begin{equation}					\label{eq4.28th}
0\leq \eta \leq 1,\quad \eta\equiv 1\;\text{ in }\; B(x_0,2r),\quad \text{and }\;\abs{D\eta}\leq 4/r,
\end{equation}
where $x_0\in\omega(s_0)$ and $r<\varrho(x_0)/5$, where $\varrho$ is as defined in \eqref{eq4.27hs}.
Then, we have
\[
\lim_{t \to s_0} \int_{\omega(s_0)} \vec G(t,x,s_0,y) \eta(y)\,dy =  I_m,\quad \forall x\in B(x_0,r).
\]
\end{lemma}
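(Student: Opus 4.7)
The plan is to fix a column index $j\in\{1,\ldots,m\}$ and work with the $j$-th column
\[
\vec u^{(j)}(t,x):=\int_{\omega(s_0)}\vec G(t,x,s_0,y)\eta(y)\,dy\cdot\vec e_j,
\]
which by~\eqref{eq5.24} is the unique weak solution of~\eqref{eq5.88} with initial data $\vec\psi_0=\eta\vec e_j$; showing $\vec u^{(j)}(t,x)\to\vec e_j$ as $t\to s_0^+$ for every $x\in B(x_0,r)$ and every $j$ then assembles to $\vec u(t,x)\to I_m$. The guiding observation is that the constant vector $\vec e_j$ is itself a weak solution of $\sL\vec v=0$ (since $\sL$ is in divergence form), so the difference $\vec u^{(j)}-\vec e_j$ satisfies the equation in the interior. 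After extending this difference by zero to times $t\le s_0$, it becomes a weak solution of $\sL\vec w=0$ on a cylinder that crosses the line $t=s_0$, to which condition~$\IH$ can be applied at the interior point $(s_0,x)$.

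First I would choose $T>s_0$ small enough, using the parabolic Lipschitz structure of $\partial\Omega$, that $B(x_0,2r)\subset\omega(t)$ for every $t\in(s_0,T)$, and set $C:=B(x_0,2r)\times(s_0-1,T)$. Define $\vec w$ on $C$ by $\vec w:=\vec u^{(j)}-\vec e_j$ on $C\cap\{t>s_0\}$ and $\vec w:=\vec 0$ on $C\cap\{t\le s_0\}$. Because $\eta\equiv1$ on $B(x_0,2r)$, the two definitions agree in the $L_2$ sense across $t=s_0$, so $\vec w\in W^{0,1}_2(C)^m$. Testing the weak formulation of~\eqref{eq5.88} for $\vec u^{(j)}$ against $\vec\phi\in C^\infty_c(C)^m$ and integrating the contribution $\int_{\{t>s_0\}}\vec e_j\cdot\vec\phi_t\,dX$ by parts in $t$, the resulting initial-time boundary term $-\int_{B(x_0,2r)}\phi^j(s_0,y)\,dy$ cancels exactly the initial-data term $\int\eta\phi^j(s_0,y)\,dy$ from the weak formulation of $\vec u^{(j)}$, since $\eta\equiv 1$ on $\supp\phi(s_0,\cdot)\subset B(x_0,2r)$. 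Thus $\vec w$ is a genuine weak solution of $\sL\vec w=0$ throughout $C$.

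Next I would fix $X=(\tau,x)$ with $x\in B(x_0,r)$ and $\tau-s_0$ small, and apply condition $\IH$ at $X$. Because $r<\varrho(x_0)/5\le R_c/5$, there is a radius $R$ comparable to $r$ and independent of $\tau$ with $Q_{-}(X,R)\subset C$ and $R<R_c\wedge d^{-}(X)$, and $\IH$ yields
\[
[\vec w]_{\mu_0/2,\mu_0;Q_{-}(X,R/2)}\le C_0R^{-\mu_0}\left(\fint_{Q_{-}(X,R)}|\vec w|^2\right)^{1/2}.
\]
On $Q_{-}(X,R)\cap\{t\le s_0\}$, $\vec w$ vanishes; on $Q_{-}(X,R)\cap\{t>s_0\}$, Lemma~\ref{lem4.5} gives $|\vec u^{(j)}|\le N$, hence $|\vec w|\le N+1$. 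The volume of this latter portion is at most $C(\tau-s_0)R^d$, so $(\fint_{Q_{-}(X,R)}|\vec w|^2)^{1/2}\le C\sqrt{\tau-s_0}/R$. Since $X':=(s_0,x)\in Q_{-}(X,R/2)$ with $|X-X'|_\sP=\sqrt{\tau-s_0}$ and the continuous representative of $\vec w$ vanishes at $X'$, applying the H\"older bound between $X$ and $X'$ gives
\[
\bigabs{\vec u^{(j)}(\tau,x)-\vec e_j}=\bigabs{\vec w(\tau,x)}\le CR^{-\mu_0-1}(\tau-s_0)^{(1+\mu_0)/2}\longrightarrow 0
\]
as $\tau\to s_0^+$.

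The main obstacle is the verification in the second paragraph that the zero extension $\vec w$ is a bona fide weak solution of $\sL\vec w=0$ across the line $t=s_0$. The delicate point is the precise cancellation between the $\delta$-type initial contribution from $\vec u^{(j)}$ and the boundary term produced by integrating $\vec e_j\cdot\vec\phi_t$ by parts; this cancellation requires exactly the hypothesis $\eta\equiv1$ on $B(x_0,2r)$, which explains both the form of the hypothesis and the choice of spatial cross-section for $C$. Once this extension step is secured, the remainder of the argument is a direct application of condition~$\IH$ combined with the uniform bound from Lemma~\ref{lem4.5}.
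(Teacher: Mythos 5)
Your proof is correct, but it takes a genuinely different route from the paper's. The paper obtains the lemma from the adjoint side: it lets $\epsilon\to 0$ in \eqref{eq2.241p} to get the identity \eqref{eq3.431p}, tests with $\vec\phi(s,y)=\zeta(s)\eta(y)\vec e_l$, and then the error term $II$ in \eqref{eq387p}, which involves $D_y\tilde{\vec G}(\cdot,X)$ against $D\eta$ (supported in $B(x_0,4r)\setminus B(x_0,2r)$, hence away from the pole $X$), is killed by H\"older's inequality and the energy estimate \textit{i)} for $\tilde{\vec G}(\cdot,X)$, giving $\abs{II}\le Cr^{-1}(t-s_0)^{1/2}\to 0$. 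You instead stay on the forward side: using \eqref{eq5.24}, the fact that constant vectors solve the divergence-form system, and the zero extension of $\vec u^{(j)}-\vec e_j$ across $t=s_0$ (your cancellation of the initial-data term with the boundary term from integrating $\vec e_j\cdot\vec\phi_t$ by parts is exactly right and is precisely where $\eta\equiv 1$ on $B(x_0,2r)$ is used), you reduce the limit to the interior estimate $\IH$ plus the uniform bound of Lemma~\ref{lem4.5}, getting the rate $R^{-1-\mu_0}(t-s_0)^{(1+\mu_0)/2}$. The paper's route needs only the energy bound for the adjoint Green's function and no $L_\infty$ lemma; yours avoids the adjoint averaged identity altogether at the price of the extension trick and Lemma~\ref{lem4.5}. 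Two points you assert but should verify explicitly: the existence of $R$ comparable to $r$ with $R<R_c\wedge d^{-}(X)$ uses the $H_1$ character of $\partial\Omega$ (boundary points at times $t<s_0$ can approach $x$ only at speed $M\sqrt{s_0-t}$, whence $d^{-}(X)\ge c\,r/(1+M)$ once $\tau-s_0$ is small, so $R$ depends on $M$); and Lemma~\ref{lem4.5} is invoked at points $y\in B(x_0,2r)$, which is legitimate because $\varrho(y)\ge \varrho(x_0)-2r\ge 3r$, so its hypothesis holds uniformly for $t-s_0$ small in terms of $r$ and $M$. Also the depth of the cylinder $C$ should be taken at least $R^2$ below $s_0$ (rather than the fixed value $1$) so that $Q_{-}(X,R)\subset C$; these are cosmetic fixes, not gaps.
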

\begin{proof}
By taking $\epsilon\to 0$ in \eqref{eq2.241p} and arguing as in the proof of \cite[Lemma~4.3]{CDK}, we find that the following identity holds for all $\tau<t$:
\begin{multline}					\label{eq3.431p}
\phi^k(X)=\phi^k(t,x)=\int_{\omega(\tau)} \tilde G_{ik}(\tau,y,t,x)\phi^i(\tau,y)\,dy+\int_{\Omega(\tau,t)} \tilde G_{ik}(Y,X) \phi^i_s(Y)\,dY\\
+\int_{\Omega(\tau,t)}\tilde A^{\alpha\beta}_{ij} D_{y^\beta} \tilde G_{jk}(Y,X)D_\alpha\phi^i(Y)\,dY,\quad \forall \vec \phi \in C^\infty_c(\Omega)^m,
\end{multline}
where we have used \eqref{eq3.02wb}.
Let $\zeta=\zeta(s)$ be a smooth function on $\bR$ such that
\[
0\leq \zeta \leq 1,\quad \zeta(s)=1\;\text{ for }\;\abs{s-s_0}\leq \delta,\quad\text{and }\;\zeta(s)=0\;\text{ for }\;\abs{s-s_0}\geq 2\delta,
\]
where $\delta$ is chosen so small that
\[
(s_0-2\delta, s_0+2\delta)\times B(x_0,4r) \subset\subset \Omega.
\]
Notice that we may take $\vec \phi(Y)=\vec\phi(s,y)=\zeta(s)\eta(y)\vec e_l$  in \eqref{eq3.431p}.
Setting $\tau=s_0$ and assuming that $\abs{t-s_0}<\delta$  in \eqref{eq3.431p}, we obtain by \eqref{eq3.01mq} that
\begin{equation}					\label{eq387p}
\delta_{kl}=\int_{\omega(s_0)}G_{kl}(t,x,s_0,y) \eta(y)\,dy+\int_{\Omega(s_0,t)} \tilde A^{\alpha\beta}_{lj} D_{y^\beta} \tilde G_{jk}(Y,X) D_\alpha\eta(y) \,dY=:I+II.
\end{equation}
Then for all $X=(t,x)$ such that $x\in B(x_0,r)$ and $\abs{t-s_0}<\delta \wedge r^2$, we estimate $II$ as follows by using the hypothesis \eqref{eq4.28th}, H\"older's inequality, and the estimate \textit{i)} for $\tilde{\vec G}(\cdot, X)$:
\[
\abs{II} \leq N r^{d/2-1}(t-s_0)^{1/2} \left(\int_{\Omega(s_0,t)\setminus \overline Q(X,r)} \abs{D_{y} \tilde{\vec G}(Y,X)}^2\,dY\right)^{1/2} \leq C r^{-1}(t-s_0)^{1/2}.
\]
Therefore, the lemma follows by taking the limit $t$ to $s_0$ in \eqref{eq387p}.
\end{proof}
We are ready to prove \eqref{eq5.23}.
Let $\vec \psi_0\in L_2(\omega(s_0))^m$ and assume that $\vec \psi_0$ is continuous at $x_0\in \omega(s_0)$.
Let $\vec u$ be the weak solution of the problem \eqref{eq5.88}.
For any $\epsilon>0$ given, choose $r<\varrho(x_0)/5$, where $\varrho$ is as defined in \eqref{eq4.27hs}, such that
\[
\abs{\vec \psi_0(x)-\vec \psi_0(x_0)}< \epsilon/2N\;\text{ for all $x$  satisfying $\abs{x-x_0}<4r$},
\]
where $N$ is the constant that appears in Lemma~\ref{lem4.5}.
Let $\eta$ be given as in Lemma~\ref{lem4.6} and let $\vec u_0$, $\vec u_\epsilon$, and $\vec u_\infty$, respectively, be the weak solution of the problem \eqref{eq5.88} with $\eta\vec \psi_0(x_0)$, $\eta(\vec \psi_0-\vec \psi_0(x_0))$, and $(1-\eta)\vec \psi_0$ in place of $\vec \psi_0$.
By the uniqueness, we have $\vec{u}=\vec{u}_0+\vec{u}_\epsilon+\vec{u}_\infty$ and by the formula \eqref{eq5.24}, $\vec u_0$ is represented by
\begin{equation}  \label{eq002y}
\vec u_0(t,x)=\left(\int_{\omega(s_0)} \vec G(t,x,s_0,y) \eta(y)\,dy\right) \vec \psi_0(x_0).
\end{equation}
Let $\delta>0$ be chosen so that
\[
(s_0-\delta, s_0+\delta)\times B(x_0,4r) \subset\subset \Omega.
\]
For any $s$ satisfying $0<s-s_0<\delta$, we set $E=B(x_0,r)\subset \omega(s)$ and $F=\overline{\omega(s_0)}\setminus B(x_0,2r)$ in Lemma~\ref{lem4.4} to get
\[
\int_{B(x_0,r)}\abs{\vec u_\infty(s,y)}^2\,dy \leq e^{-\gamma r^2/(s-s_0)} \norm{\vec \psi_0}_{L_2(\omega(s_0))}^2.
\]
Therefore, for all $t$ satisfying $0<t-s_0<\delta \wedge r^2$, we set $R=\sqrt{t-s_0}/4$ in \eqref{eq3.10np} to get
\begin{equation}					\label{eq001y}
\abs{\vec u_\infty(t,x)} \leq N R^{-d/2} e^{-\gamma(r/R)^2} \norm{\vec \psi_0}_{L_2(\omega(s_0))},\quad\forall x\in B(x_0,R).
\end{equation}
Finally, we estimate $\vec u_\epsilon$ by using Lemma~\ref{lem4.5}.
\begin{equation}					\label{eq003y}
\abs{\vec u_\epsilon(t,x)}\leq  \epsilon/2\;\text{ whenever }\; 0<t-s_0<(1\wedge M^{-2})\,\varrho^2(x)/4.
\end{equation}
Combining \eqref{eq002y}, \eqref{eq001y}, and \eqref{eq003y}, we see that  if $t-s_0$ is chosen sufficiently small, then there exists $\vartheta>0$ such that for all $x\in B(x_0,\vartheta)$ we have $\abs{\vec u(t,x)-\vec\psi_0(x_0)}<\epsilon$.
This completes the proof.
\hfill\qedsymbol

%---end proof

\subsection{Proof of Theorem~\ref{thm2}}
%We modify the proof in \cite[\S5.1]{CDK}, which was based on that in \cite{HK04}.
%We mention that the method in \cite{HK04} was in turn based on the ideas appeared in \cite{Davies} and \cite{FS}.
By Theorem~\ref{thm1}, the condition $\IH$ implies existence of the Green's function $\vec G(X,Y)$ of $\sL$ in $\Omega$.
Therefore, all the conclusions of Theorem~\ref{thm1} are satisfied.
Let $\phi$ be a bounded Lipschitz function on $\bR^d$ satisfying $\abs{D \phi} \leq K$ a.e. for some $K>0$ to be chosen later.
For any $\vec f\in L_2(\omega(s))^m$, let $\vec u$ be a unique weak solution of the problem
\begin{equation} 		\label{eq3.6.1}
\sL \vec u = 0\;\text{ in }\;\Omega(s,\infty),\quad \vec u=0\;\text{ on }\; S\Omega(s,\infty),\quad \vec u= e^{-\phi}\vec f\;\text{ on }\;\omega(s).
\end{equation}
For $t>s$, we define the operator $P^\phi_{s\to t}: L_2(\omega(s))^m\to L_2(\omega(t))^m$ by
\[
P^\phi_{s\to t} \vec f(x)= e^{\phi(x)}\vec u(t,x).
\]
Notice that by the representation formula \eqref{eq5.24}, we have
\begin{equation}					\label{eq3.60.3}
P^\phi_{s\to t}\vec f(x)= e^{\phi(x)}\int_{\omega(s)}\vec G(t,x,s,y)e^{-\phi(y)}\vec f(y)\,dy.
\end{equation}
For $t\geq s$, we define
\[
I(t)=\norm{e^\phi\vec u(t,\cdot)}_{L^2(\omega(t))}^2=\norm{P^\phi_{s\to t}\vec f}_{L_2(\omega(t))}^2.
\]
Then, as in the proof of Lemma~\ref{lem4.4},
we find that $I$ is absolutely continuous and satisfies for a.e. $t>s$, the differential inequality
\[
I'(t) \leq (2/\nu^3) K^2 I(t).
\]
The above inequality with the initial condition $I(s)=\norm{\vec f}^2_{L_2(\omega(s))}$ yields
\[
I(t)\leq e^{(2/\nu^3) K^2 (t-s)}\norm{\vec f}_{L_2(\omega(s))}^2, \quad \forall t \geq s.
\]
We have thus shown that for all $\vec f \in L_2(\omega(s))^m$, the operator $P^\phi_{s\to t}$ satisfies
\begin{equation}					\label{eq3.70}
\norm{P^\phi_{s\to t}\vec f}_{L_2(\omega(t))} \leq e^{\vartheta K^2(t-s)}\norm{\vec f}_{L_2(\omega(s))}, \quad \forall t>s,
\end{equation}
where $\vartheta=\nu^{-3}$.
We set $R= \sqrt{t-s} \wedge R_{max}$ and use the condition $\LB$ to estimate
\begin{align*}
e^{-2\phi(x)}\abs{P^\phi_{s\to t}\vec f(x)}^2 = \abs{\vec u(t,x)}^2 &= \abs{\vec u(X)}^2\\
&\leq N_0^2 R^{-(d+2)} \int_{\Omega_{-}[X,R]} \abs{\vec u(Y)}^2\,dY\\
& = N_0^2 R^{-(d+2)} \int_{t-R^2}^t \int_{\omega(\tau)\cap B(x,R)}e^{-2\phi(y)} \abs{P^\phi_{s\to\tau}\vec f(y)}^2\,dy\,d\tau,
\end{align*}
Therefore, by using the estimate \eqref{eq3.70}, we get
\begin{align*}
\abs{P^\phi_{s\to t}\vec f(x)}^2 &\leq N_0^2 R^{-d-2} \int_{t-R^2}^t \int_{\omega(\tau)\cap B(x,R)}e^{2\phi(x)-2\phi(y)} \abs{P^\phi_{s\to\tau}\vec f(y)}^2\,dy\,d\tau\\
& \leq N_0^2 R^{-d-2} \int_{t-R^2}^t \int_{\omega(\tau)\cap B(x,R)}e^{2KR} \abs{P^\phi_{s\to\tau}\vec f(y)}^2\,dy\,d\tau\\
& \le N_0^2 R^{-d-2} \, e^{2KR} \int_{t-R^2}^t e^{2\vartheta K^2(\tau-s)}\norm{\vec f}_{L_2(\omega(s))}^2\,d\tau\\
& \le N_0^2 R^{-d}\,e^{2KR+2\vartheta K^2(t-s)}\norm{\vec f}_{L_2(\omega(s))}^2.
\end{align*}
We have thus obtained the following $L_2(\omega(s)) \to L_\infty(\omega(t))$ estimate for $P^\phi_{s\to t}$:
\begin{equation} \label{eq3.70.3}
\norm{P^\phi_{s\to t}\vec f}_{L_\infty(\omega(t))} \leq  N_0 R^{-d/2}\,e^{KR+\vartheta K^2(t-s)} \norm{\vec f}_{L_2(\omega(s))}.
\end{equation}

We also define the operator $Q^\phi_{t\to s}: L_2(\omega(t))^m\to L_2(\omega(s))^m$ for $s<t$ by setting
\[
 Q^\phi_{t\to s}\vec g(y)= e^{-\phi(y)}\vec v(s,y),\quad  \forall \vec g \in L_2(\omega(t))^m,
 \]
 where $\vec v$ is a unique weak solution of the problem
\begin{equation}					\label{eq3.6.11}
\sLt \vec v = 0\;\text{ in }\;\Omega(-\infty,t),\quad \vec v=0\;\text{ on }\; S\Omega(-\infty,t),\quad \vec v= e^{\phi}\vec g \;\text{ on }\;\omega(t).
\end{equation}
By a similar calculation that leads to \eqref{eq3.70.3}, we obtain
\begin{equation}					\label{eq3.73.3}
\norm{Q^\phi_{t\to s}\vec g}_{L_\infty(\omega(s))} \leq  N_0 R^{-d/2}\,e^{KR+\vartheta K^2(t-s)} \norm{\vec g}_{L_2(\omega(t))}.
\end{equation}
It follows from \eqref{eq3.6.1}, \eqref{eq3.6.11}, and \eqref{eq4.02un} in Lemma~\ref{lem4.1} that
\[
\int_{\omega(t)}\bigl(P^\phi_{s\to t}\vec f\bigr) \cdot \vec g \,dx= \int_{\omega(s)}\vec f\cdot \bigl(Q^\phi_{t\to s} \vec g\bigr)\,dx,\quad \forall \vec f\in L_2(\omega(s))^m,\;\; \forall \vec g \in L_2(\omega(t))^m.
\]
In particular, the above identity holds for all $\vec f \in C^\infty_c(\omega(s))^m$ and $\vec g \in C^\infty_c(\omega(t))^m$.
Therefore, by the estimate \eqref{eq3.73.3} and duality, we get
\begin{equation}					\label{eq3.74.3}
\norm{P^\phi_{s\to t}\vec f}_{L_2(\omega(t))} \leq  N_0 R^{-d/2}\,e^{KR+\vartheta K^2(t-s)} \norm{\vec f}_{L_1(\omega(s))},\quad \forall \vec f \in C^\infty_c(\omega(s))^m.
\end{equation}
Now, set $r=(s+t)/2$ and observe that by uniqueness, we have
\[
P^\phi_{s\to t}\vec f= P^\phi_{r\to t}(P^\phi_{s\to r}\vec f),\quad \forall \vec f\in C^\infty_c(\omega(s))^m.
\]
Then, by noting that $t-r=r-s=(t-s)/2$ and $R/\sqrt{2}\le \sqrt{t-r}\wedge R_{\max}\leq R$, we obtain from \eqref{eq3.70.3} and \eqref{eq3.74.3} that
\[
\norm{P^\phi_{s\to t}\vec f}_{L_\infty(\omega(t))} \leq N R^{-d}\,e^{ 2KR+\vartheta K^2(t-s)} \norm{\vec f}_{L_1(\omega(s))}, \;\; \forall \vec f\in C^\infty_c(\omega(s))^m;\quad N=2^{d/2} N_0^2.
\]
For all $x\in \omega(t)$ and $y\in \omega(s)$, the above estimate and \eqref{eq3.60.3} yield, by duality, that
\begin{equation}					\label{eq3.83}
e^{\phi(x)-\phi(y)}\abs{\vec G(t,x,s,y)} \leq  N R^{-d}\,e^{ 2KR+\vartheta K^2(t-s)}.
\end{equation}
Let $\phi(z)=K \phi_0(\abs{z-y})$, where $\phi_0$ is defined on $[0,\infty)$ by
\[
\phi_0(r)=\begin{cases}r&\text{if $r \leq \abs{x-y}$} \\
\abs{x-y}&\text{if  $r>\abs{x-y}$}.
\end{cases}
\]
Then, $\phi$ is a bounded Lipschitz function on $\bR^d$ satisfying $\abs{D\phi} \leq K$ a.e.
We set
\[
K=\abs{x-y}/2\vartheta(t-s)\quad\text{and}\quad \xi:=\abs{x-y}/\sqrt{t-s}.
\]
By \eqref{eq3.83} and the obvious inequality $R/\sqrt{t-s}\leq 1$, we have
\[
\abs{\vec G (t,x,s,y)}\leq  N R^{-d}\, \exp\{\xi/\vartheta-\xi^2/4\vartheta\}.
\]
Let $N=N(\vartheta)=N(\nu)$ be chosen so that
\[
\exp(\xi/\vartheta-\xi^2/4\vartheta)\le N\exp(-\xi^2/8\vartheta),\quad\forall \xi\in [0,\infty).
\]
If we set $\kappa=1/8\vartheta=\nu^3/8$, then we obtain
\[
\abs{\vec G(t,x,s,y)}\le NR^{-d}\exp\left\{-\kappa\abs{x-y}^2/(t-s)\right\}
\]
where $N=N(d,m,\nu,N_0)>0$ and recall that we set $R=\sqrt{t-s}\wedge R_{max}$.
The proof is complete.
\hfill\qedsymbol

%---end proof

\subsection{Proof of Theorem~\ref{thm3}}
%The proof is essentially the same as that of \cite[Theorem~3.6]{CDK2}.
%For the completeness of presentation, we reproduce here the argument in \cite{CDK2} with appropriate adjustment.
Notice that by Lemma~\ref{lem2.19} and Theorem~\ref{thm2}, for all $X=(t,x)$ and $Y=(y,s) $ in $\Omega$ with $t>s$ we have
\begin{equation}			\label{eq2.25t}
\abs{\vec G(t,x,s,y)}\leq C_1 \left\{(t-s)\wedge R_{max}^2\right\}^{-d/2}\exp\left\{-\kappa\abs{x-y}^2/(t-s)\right\},
\end{equation}
where $C_1=C_1(n,m,\nu, \mu_0,N_1)$.
We denote
\[
\delta_1(X,Y)= \left(1 \wedge \frac {d^{-}(X)} {R_{max}\wedge \abs{X-Y}_\sP} \right)\quad\text{and}\quad \delta_2(X,Y)= \left(1 \wedge \frac{d^{+}(Y)} {R_{max}\wedge \abs{X-Y}_\sP}\right)
\]
so that $\delta(X,Y)=\delta_1(X,Y)\, \delta_2(X,Y)$.
To prove the estimate \eqref{eq3.8yy}, we first claim that
\begin{equation}			\label{eq22.00k}
\abs{\vec G(t,x,s,y)}\leq N \delta_1(X,Y)^{\mu_0} \left\{(t-s)\wedge R_{max}^2\right\}^{-d/2}\exp\left\{-\kappa \abs{x-y}^2/4(t-s)\right\},
\end{equation}
where $N=N(n,m,\nu, \mu_0,N_1)$.
The following lemma is a key to prove the above claim.

\begin{lemma}			\label{lem3.6}
For $R \in (0,R_{max})$ and $X\in\Omega$ such that $d^{-}(X)< R/2$, let $\vec u$ be a weak solution of $\sL\vec u=0$ in $\Omega_{-}[X,R]$ vanishing on $\sP \Omega_{-}[X,R]$.
Then, we have
\begin{equation}			\label{eq3.7m}
\abs{\vec u(X)} \le N d^{-}(X)^{\mu_0} R^{-d/2-1-\mu_0}\norm{\vec u}_{L_2(\Omega_{-}[X,R])},
\end{equation}
where $N=N(d,m,\nu, \mu_0, N_1,M)$.
\end{lemma}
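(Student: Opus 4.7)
The strategy is to apply condition~(LH) at the center $X$ with the given radius $R$, which produces a H\"older regularity estimate for $\tilde{\vec u}=\chi_{\Omega_-[X,R]}\vec u$ on $Q_-(X,R/2)$, and then to exploit the fact that the closest parabolic boundary point of $X$ lies inside this half-cylinder (by the hypothesis $d^-(X)<R/2$) and that $\tilde{\vec u}$ effectively vanishes there.

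First, condition~(LH) applies because $X\in\Omega$, $R<R_{max}$, and $\vec u$ vanishes on $\sP\Omega_-[X,R]$ by hypothesis. Combining it with $|Q_-(X,R)|= c_d R^{d+2}$ and $\tilde{\vec u}=\chi_{\Omega_-[X,R]}\vec u$ gives
\begin{equation*}
[\tilde{\vec u}]_{\mu_0/2,\mu_0;Q_-(X,R/2)}\leq N_1 R^{-\mu_0}\Bigl(\fint_{Q_-(X,R)}|\tilde{\vec u}|^2\Bigr)^{1/2}\leq N R^{-\mu_0-(d+2)/2}\norm{\vec u}_{L_2(\Omega_-[X,R])}.
\end{equation*}

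Next, I would pick $X_0=(s_0,x_0)\in\sP\Omega$ with $s_0\leq t$ and $|X_0-X|_{\sP}=d^-(X)$; since $d^-(X)<R/2$ we have $X_0\in\overline{Q_-(X,R/2)}$. Because $X_0\in\sP\Omega$, the definition of $\sP\Omega$ furnishes a sequence of exterior points $Z_n\to X_0$ lying in $Q_-(X_0,\epsilon)\setminus\Omega\subset Q_-(X,R/2)\setminus\Omega$ once $\epsilon<R/2-d^-(X)$, and on each such $Z_n$ we have $\tilde{\vec u}(Z_n)=0$. Approaching $X$ from the past within $Q_-(X,R/2)\cap\Omega$ by a sequence $X_n\to X$ and invoking interior continuity of $\vec u$ (available from $\text{(LH)}\Rightarrow\text{(IH)}$), we have $\tilde{\vec u}(X_n)=\vec u(X_n)\to\vec u(X)$. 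Passing to the limit in the H\"older bound
\begin{equation*}
|\tilde{\vec u}(X_n)-\tilde{\vec u}(Z_n)|\leq [\tilde{\vec u}]_{\mu_0/2,\mu_0;Q_-(X,R/2)}\,|X_n-Z_n|_{\sP}^{\mu_0}
\end{equation*}
yields $|\vec u(X)|\leq [\tilde{\vec u}]_{\mu_0/2,\mu_0;Q_-(X,R/2)}\,d^-(X)^{\mu_0}$, and combining this with the H\"older seminorm estimate above produces \eqref{eq3.7m}.

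The only delicate step is the boundary limit argument showing $\tilde{\vec u}(X_0)=0$: the point $X_0$ lies only on the parabolic closure of $Q_-(X,R/2)$, so one must verify that the approximating exterior sequence $Z_n\to X_0$ really sits inside $Q_-(X,R/2)$. This is where the $H_1$ graph structure enters (with the constant $M$), guaranteeing that near $X_0$ the complement of $\Omega$ contains points with strictly smaller time coordinate which are arbitrarily close to $X_0$; the defining property of $\sP\Omega$ does essentially all the work.
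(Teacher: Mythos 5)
Your proposal is correct and follows essentially the same route as the paper: apply condition (LH) at $X$ with radius $R$ to bound $[\tilde{\vec u}]_{\mu_0/2,\mu_0;Q_{-}(X,R/2)}$ by $N R^{-\mu_0-d/2-1}\norm{\vec u}_{L_2(\Omega_{-}[X,R])}$, then compare $\tilde{\vec u}(X)$ with its value (zero) at exterior points at parabolic distance arbitrarily close to $d^{-}(X)$ and pass to the limit. Your closing worry is unneeded: as your own containment computation shows, the triangle inequality for $\abs{\cdot}_\sP$ together with the defining property of $\sP\Omega$ (exterior points in $Q_{-}(X_0,\epsilon)$, hence with time $<s_0\le t$) already places the exterior sequence inside $Q_{-}(X,R/2)$, with no use of the $H_1$ graph constant $M$, which is exactly what the paper's phrase ``there is $Y\in Q_{-}(X,R/2)\setminus\Omega$ with $\abs{X-Y}_\sP=r$'' relies on.
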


\begin{proof}
By the very definition the condition $\LH$, we have
\begin{equation}			\label{eq3.8x}
\abs{\tilde{\vec u}(Y)-\tilde{\vec u}(X)} \leq N \abs{Y-X}_\sP^{\mu_0}\, R^{-d/2-1-\mu_0}\norm{\vec u}_{L_2(\Omega_{-}[X,R])},\quad\forall Y\in Q_{-}(X,R/2).
\end{equation}
For any $r$ satisfying $d^{-}(X)<r< R/2$, there is $Y \in Q_{-}(X,R/2) \setminus \Omega$ such that $\abs{X-Y}_\sP=r$.
By \eqref{eq3.8x} we obtain
\[
\abs{\vec u(X)}=\abs{\tilde{\vec u}(X)-\tilde{\vec u}(Y)} \leq N r^{\mu_0} R^{-d/2-1-\mu_0}\norm{\vec u}_{L_2(\Omega_{-}[X,R])}.
\]
By taking limit $r\to d^{-}(X)$ in the above inequality, we derive \eqref{eq3.7m}.
\end{proof}

Now we are ready to prove \eqref{eq22.00k}.
Take $R=(R_{max}\wedge \abs{X-Y}_\sP)/4$.
We may assume that $d^{-}(X) < R/2$ because otherwise \eqref{eq22.00k} follows from \eqref{eq2.25t}.
We then set $\vec u$ to be the columns of $\vec G(\cdot,Y)$ in Lemma~\ref{lem3.6} to obtain
\begin{equation}							\label{eq17.24}
\abs{\vec G(X,Y)} \leq C d^{-}(X)^{\mu_0} R^{-d/2-1-\mu_0} \norm{\vec G(\cdot,Y)}_{L_2(\Omega_{-}[X,R])},\quad R=(R_{max}\wedge \abs{X-Y}_\sP)/4.
\end{equation}

Next,  we consider the following three possible cases.

\begin{case1}
In this case $R=\sqrt{t-s}/4=\abs{X-Y}_\sP/4$ and thus, we get from \eqref{eq17.24} and \eqref{eq5.18} that
\[
\abs{\vec G(X,Y)} \leq N d^{-}(X)^{\mu_0} R^{-d/2-1-\mu_0} \norm{\vec G(\cdot,Y)}_{L_2(\Omega_{-}[X,R])}\leq N d^{-}(X)^{\mu_0} R^{-d-\mu_0},
\]
which immediately implies \eqref{eq22.00k} in this case.
\end{case1}

\begin{case2}
In this case $R=(\abs{x-y}\wedge R_{max})/4$.
We denote $Z=(r,z)$ and claim that for all $Z \in \Omega_{-}(X,2R)$, we have
\begin{equation}			\label{eq5.26rv}
\abs{\vec G(r,z,s,y)} \leq N C_1(t-s)^{-d/2} \exp\left\{-\kappa \abs{x-y}^2/4(t-s)\right\},
\end{equation}
where $C_1$ and $\kappa$ are the same constants as in \eqref{eq2.25t} and $N=N(d,\kappa)$.
To prove the claim, first note that we may assume $Y=0$ without loss of generality.
Then by \eqref{eq2.25t} we have
\[
\abs{\vec G(r,z,s,y)} \leq C_1 r^{-d/2} e^{-\kappa \abs{z}^2/r} \, \chi_{(0,\infty)}(r)  \leq C_1 r^{-d/2} e^{-\kappa \abs{x}^2/4r} \, \chi_{(0,\infty)}(r),
\]
where we used $\abs{z}=\abs{z-y} \geq \abs{x-y}/2=\abs{x}/2$.
Let us denote
\[
g(\tau)= \tau^{-d/2} e^{-\kappa \abs{x}^2/ 4\tau}\,\chi_{(0,\infty)}(\tau),\quad g_0(\tau)=\tau^{-d/2} e^{-\kappa/4 \tau}\,\chi_{(0,\infty)}(\tau).
\]
Then the claim \eqref{eq5.26rv} will follow if we show that there exists a positive number $N=N(d,\kappa)$ such that $g(r)<N g(t)$ for all $r <t < \abs{x}^2$, which in turn will follow if  we show that $g_0(r_1) \leq N g_0( r_2)$ for all $r_1<r_2 \leq 1$.
But the latter assertion is easy to verify by an elementary analysis of the function $g_0$.

We have thus proved \eqref{eq5.26rv}, which combined with \eqref{eq17.24} yields
\[
\abs{\vec G(X,Y)} \leq C d^{-}(X)^{\mu_0} R^{-\mu_0} (t-s)^{-d/2} \exp \left\{-\kappa \abs{x-y}^2/4(t-s)\right\}.
\]
Therefore, we also obtain \eqref{eq22.00k} in this case.
\end{case2}

\begin{case3}
In this case $R=R_{max}/4$, and the desired estimate \eqref{eq22.00k} becomes
\begin{equation}			\label{eq22.15}
\abs{\vec G(t,x,s,y)}\leq C\set{d^{-}(X)/R_{max}}^{\mu_0} R_{max}^{-d}\exp\bigset{-\kappa\abs{x-y}^2/4(t-s)}.
\end{equation}
Since $t-s\geq 16 R^2$, for all $Z=(r,z)\in \Omega_{-}(X,2R)$, we have
\begin{equation}
                        \label{eq18.21}
\exp\left\{- \kappa\, \frac{\abs{z-y}^2}{r-s}\right\}\le
\exp\left\{- \kappa\, \frac{\abs{x-y}^2/2-\abs{z-x}^2}{t-s}\right\} \leq e^{\kappa/4} \exp\left\{- \frac{\kappa\abs{x-y}^2}{2(t-s)}\right\}.
\end{equation}
Then, from \eqref{eq17.24}, \eqref{eq2.25t}, and \eqref{eq18.21}, we obtain \eqref{eq22.15}, which implies \eqref{eq22.00k} in this case.
\end{case3}

We have thus proved that the estimate \eqref{eq22.00k} holds in all possible cases.
Finally, notice that Lemma~\ref{lem3.6} remains valid if $\sL$, $X$, $d^{-}(X)$, $\Omega_{-}[X,R]$, and $\sP\Omega_{-}[X,R]$, respectively, are replaced by $\sLt$, $Y$, $d^{+}(Y)$, $\Omega_{+}[Y,R]$, and $\sP\Omega_{+}[Y,R]$.
Therefore, by replicating the above argument to $\tilde{\vec G}(\cdot,X)$, utilizing the estimate \eqref{eq22.00k} instead of \eqref{eq2.25t}, and using the identity \eqref{eq3.01mq}, we obtain 
\[
\abs{\vec G(t,x,s,y)}\leq N \delta_2(X,Y)^{\mu_0}\delta_1(X,Y)^{\mu_0} \{(t-s)\wedge R_{max}^2\}^{-d/2}\exp\left\{-\kappa \abs{x-y}^2/16(t-s) \right\}.
\]
By replacing $\kappa$ by $\kappa/16$, we obtain the desired estimate \eqref{eq3.8yy}.
The theorem is proved.
\hfill\qedsymbol

\section{Appendix}

\begin{lemma}				\label{lem2.19}
Assume the condition $\LH$.
Then the condition $\LB$ is satisfied with the same $R_{max}$ and $N_0=N_0(n,m,\mu_0,N_1)$.
\end{lemma}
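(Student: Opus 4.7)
The plan is to derive condition $\LB$ from condition $\LH$ by exploiting the truncated extension $\tilde{\vec u} = \chi_{\Omega_{-}[X,R]} \vec u$, which by hypothesis is H\"older continuous on the full parabolic cylinder $Q_{-}(X,R/2)$, thereby bypassing any need to understand the geometry of $\sP\Omega$ inside that cylinder.

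First I would fix $X \in \Omega$ and $0 < R < R_{max}$, and let $\vec u$ be a weak solution of $\sL \vec u = 0$ in $\Omega_{-}[X,R]$ vanishing on $\sP\Omega_{-}[X,R]$. Applying part i) of $\LH$ to $\tilde{\vec u}$ bounds its H\"older seminorm on $Q_{-}(X,R/2)$ by $N_1 R^{-\mu_0}\bigl(\fint_{Q_{-}(X,R)} \abs{\tilde{\vec u}}^2\bigr)^{1/2}$. Since the parabolic distance obeys the triangle inequality (via $\sqrt{a+b} \leq \sqrt{a} + \sqrt{b}$), the parabolic diameter of $Q_{-}(X,R/2)$ is at most $R$, so for any $Y_1, Y_2 \in Q_{-}(X,R/2)$ one obtains
\[
\abs{\tilde{\vec u}(Y_1) - \tilde{\vec u}(Y_2)} \leq N_1 R^{-\mu_0} R^{\mu_0} \Bigl(\fint_{Q_{-}(X,R)} \abs{\tilde{\vec u}}^2\Bigr)^{1/2} = N_1 \Bigl(\fint_{Q_{-}(X,R)} \abs{\tilde{\vec u}}^2\Bigr)^{1/2},
\]
so the factors of $R$ cancel and the oscillation is controlled by a pure $L_2$ average.

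Next, by Chebyshev's inequality there exists $Y_0 \in Q_{-}(X,R/2)$ with $\abs{\tilde{\vec u}(Y_0)} \leq \bigl(\fint_{Q_{-}(X,R/2)} \abs{\tilde{\vec u}}^2\bigr)^{1/2}$. Combining this sub-average point with the oscillation bound and comparing $\fint_{Q_{-}(X,R/2)}$ with $\fint_{Q_{-}(X,R)}$ at the cost of a dimensional constant $2^{(d+2)/2}$, I conclude
\[
\norm{\tilde{\vec u}}_{L_\infty(Q_{-}(X,R/2))} \leq (2^{(d+2)/2} + N_1) \Bigl(\fint_{Q_{-}(X,R)} \abs{\tilde{\vec u}}^2\Bigr)^{1/2} \leq N R^{-(d+2)/2} \norm{\vec u}_{L_2(\Omega_{-}[X,R])},
\]
with $N = N(d, N_1)$. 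Since $\tilde{\vec u} = \vec u$ on $\Omega_{-}[X,R/2]$ and vanishes on the complement inside $Q_{-}(X,R/2)$, the left-hand side coincides with $\norm{\vec u}_{L_\infty(\Omega_{-}[X,R/2])}$, which is exactly part i) of $\LB$; part ii) follows by repeating the argument with part ii) of $\LH$ on $Q_{+}(X,R/2)$.

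I do not anticipate any real obstacle. The only mild subtlety is recognizing that the H\"older seminorm of $\tilde{\vec u}$ on the full cylinder $Q_{-}(X,R/2)$ alone already suffices for a pointwise $L_\infty$ bound, since one may apply Chebyshev's inequality on the regular cylinder $Q_{-}(X,R/2)$ rather than on the possibly irregular set $\Omega_{-}[X,R/2]$, which might contain neither a sub-average point nor a boundary point where $\tilde{\vec u}$ vanishes.
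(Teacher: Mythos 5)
Your proof is correct and follows essentially the same route as the paper: both work with the zero extension $\tilde{\vec u}$ on full parabolic cylinders and bound $\sup_{Q_{-}(X,R/2)}\abs{\tilde{\vec u}}$ by the H\"older seminorm supplied by condition $\LH$ (whose factor $R^{-\mu_0}$ cancels against the $R^{\mu_0}$ coming from the cylinder's parabolic diameter) plus an $L_2$ average of $\tilde{\vec u}$ over $Q_{-}(X,R)$. The only inessential difference is that you select a sub-average point via Chebyshev on $Q_{-}(X,R/2)$, whereas the paper averages $\abs{\tilde{\vec u}(Z)}^2$ over a nearby sub-cylinder $Q_{-}(Y,R/2)$; both give part i) of $\LB$ with an admissible constant, and part ii) is symmetric.
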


\begin{proof}
Let $\vec u$ be a weak solution of $\sL \vec u=0$ in $\Omega_{-}[X,R]$ vanishing on $\sP\Omega_{-}[X,R]$, where $X\in \Omega$ and $R\in (0,R_{max})$.
By using the triangle inequality, for all $Y\in Q_{-}(X,R/2)$ and $Z\in Q_{-}(Y,R/2)$, we have
\[
\abs{\tilde{\vec u}(Y)}^2 \leq 2\abs{\tilde{\vec u}(Y)-\tilde{\vec u}(Z)}^2+2\abs{\tilde{\vec u}(Z)}^2 \leq N R^{2\mu_0} [\tilde{\vec u}]_{\mu_0/2,\mu_0;Q_{-}(X,R)}^2+N\abs{\tilde{\vec u}(Z)}^2.
\]
Then by taking average over $Z\in Q_{-}(Y,R/2)$ and using $\LH$, we obtain
\[
\norm{\vec u}_{L_\infty(\Omega_{-}[X,R/2])}^2 \leq N R^{2\mu_0} [\tilde{\vec u}]_{\mu_0/2,\mu_0;Q_{-}(X,R)}^2+N R^{-d-2}\norm{\tilde{\vec u}}_{L_2(Q_{-}(X,R))}^2 \leq N R^{-d-2}\norm{\vec u}_{L_2(\Omega_{-}[X,R])}^2,
\]
where $N=N(d,m,\mu_0, N_1)$.
This proves the part i) of the condition $\LB$.
The proof for the other part is very similar and is omitted.
\end{proof}

\begin{lemma}					\label{lem8.1ap}
Let $\Omega$ be a time-varying $H_1$ (graph) domain in $\bR^{d+1}$.
Assume that $\vec u$ is a weak solution of $\sL \vec u=\vec f$ in $\Omega_{-}[X_0,R]$ vanishing on $\sP\Omega_{-}[X_0,R]$, where $\vec f \in L_\infty(\Omega_{-}[X_0,R])$, and denote $\tilde{\vec u}=\chi_{\Omega_{-}[X_0,R]}\, \vec u$.
Then we have
\begin{equation}		\label{eq8.6ap}
\int_{Q_{-}(X_0,R)} \bigabs{\tilde{\vec u}-(\tilde{\vec u})_{X_0,R}}^2 \leq N R^2\int_{\Omega_{-}[X_0,R]}\abs{D\vec u}^2 +N R^{2-d}\norm{\vec f}_{L_1(\Omega_{-}[X_0,R])}^2,
\end{equation}
where $(\tilde{\vec u})_{X_0,R}=\fint_{Q_{-}(X_0,R)} \tilde{\vec u}$ and $N=N(d,m,\nu)$.
\end{lemma}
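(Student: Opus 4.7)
The estimate is a parabolic Poincar\'e-type inequality whose only non-routine ingredient is the passage between $\Omega_{-}[X_0,R]$ and the full cylinder $Q_{-}(X_0,R)$. I would carry out the proof in three stages.

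\emph{Stage 1: extending the weak formulation.} Using the zero trace of $\vec u$ on $\sP\Omega\cap Q_{-}(X_0,R)$, I would first verify that $\tilde{\vec u}\in V_2(Q_{-}(X_0,R))^m$ with $D\tilde{\vec u}=\chi_{\Omega_{-}[X_0,R]}D\vec u$ and, by a standard density argument (approximating by cutoffs vanishing near the graph portion of $\partial\Omega$), the weak identity
\[
\int_{Q_{-}(X_0,R)}\bigl(-\tilde{\vec u}\cdot\vec\phi_t + A^{\alpha\beta}D_\beta\tilde{\vec u}\cdot D_\alpha\vec\phi\bigr)=\int_{\Omega_{-}[X_0,R]}\vec f\cdot\vec\phi
\]
holds for every $\vec\phi\in C^\infty_c(Q_{-}(X_0,R))^m$, so that test functions need not vanish near $\partial\Omega\cap Q_{-}(X_0,R)$.

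\emph{Stage 2: a weighted spatial mean.} Fix $\eta\in C^\infty_c(B(x_0,R))$ with $\int\eta=1$, $\norm{\eta}_{L_\infty}\le CR^{-d}$, and $\norm{D\eta}_{L_\infty}\le CR^{-d-1}$, and define $U(\tau):=\int\tilde{\vec u}(\tau,\cdot)\,\eta$. Combining the classical spatial Poincar\'e inequality $\int_{B(x_0,R)}\abs{\tilde{\vec u}(\tau,\cdot)-\bar u(\tau)}^2\le CR^2\int_{B(x_0,R)}\abs{D\tilde{\vec u}}^2$ (where $\bar u(\tau)$ is the unweighted spatial mean) with the Cauchy--Schwarz bound $\abs{B(x_0,R)}\,\abs{\bar u(\tau)-U(\tau)}^2\le CR^2\int\abs{D\tilde{\vec u}}^2$ (which follows from $\bar u-U=\int(\tilde{\vec u}-\bar u)(|B(x_0,R)|^{-1}-\eta)$ and $\norm{|B(x_0,R)|^{-1}-\eta}_{L_2}^2\le CR^{-d}$), I obtain after integration in $\tau$
\[
\int_{Q_{-}(X_0,R)}\abs{\tilde{\vec u}-U(\tau)}^2\le CR^2\int_{\Omega_{-}[X_0,R]}\abs{D\vec u}^2.
\]

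\emph{Stage 3: time oscillation via the PDE.} Testing the extended weak formulation against $\vec\phi(\tau,x)=\zeta(\tau)\eta(x)\vec e_k$ and letting $\zeta\to\chi_{[s,t]}$ produces
\[
U(t)-U(s)=\int_s^t\!\int_{B(x_0,R)}\!\chi_{\Omega_{-}}\vec f\,\eta\;-\;\int_s^t\!\int_{B(x_0,R)} A^{\alpha\beta}D_\beta\tilde{\vec u}\,D_\alpha\eta.
\]
Cauchy--Schwarz and the size controls on $\eta,D\eta$ then give
\[
\abs{U(t)-U(s)}^2\le CR^{-2d}\norm{\vec f}_{L_1(\Omega_{-}[X_0,R])}^2 + CR^{-d-2}\abs{t-s}\,\norm{D\vec u}_{L_2(\Omega_{-}[X_0,R])}^2.
\]
Integrating twice over $s,t\in(t_0-R^2,t_0)$ and using $\int\abs{U-\bar U}^2\le (2R^2)^{-1}\int\!\int\abs{U(t)-U(s)}^2$, then multiplying by $\abs{B(x_0,R)}\sim R^d$, yields $\abs{B(x_0,R)}\int\abs{U-\bar U}^2\le CR^2\norm{D\vec u}_{L_2}^2+CR^{2-d}\norm{\vec f}_{L_1}^2$. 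Combining with Stage 2 via
\[
\int\abs{\tilde{\vec u}-(\tilde{\vec u})_{X_0,R}}^2\le\int\abs{\tilde{\vec u}-\bar U}^2\le 2\int\abs{\tilde{\vec u}-U(\tau)}^2+2\abs{B(x_0,R)}\int\abs{U-\bar U}^2
\]
delivers \eqref{eq8.6ap}.

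\emph{Main obstacle.} The principal technical point is Stage 1: ensuring the weak formulation survives for test functions not vanishing on the graph portion of $\partial\Omega$, which relies crucially on the trace vanishing of $\vec u$ there and standard Lipschitz-boundary approximation. Once this is set up, the rest is a routine parabolic Poincar\'e computation in which the PDE converts a time-oscillation estimate into the desired $\norm{D\vec u}_{L_2}^2$ and $\norm{\vec f}_{L_1}^2$ terms.
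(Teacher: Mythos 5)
Your Stages 2 and 3 follow, almost step by step, the paper's own argument: the paper introduces the weighted spatial mean $\vec\beta(t)=\delta\int\zeta\,\tilde{\vec u}(t,\cdot)$ for a normalized spatial cutoff $\zeta$, controls $\tilde{\vec u}-\vec\beta(t)$ by a slicewise Poincar\'e inequality, bounds $\abs{\vec\beta(t)-\vec\beta(s)}$ by testing the equation with $\zeta\vec\Lambda$, and concludes by the minimality of the mean and a variance computation. So the route is the paper's route, and your quantitative bookkeeping in Stages 2--3 is correct.

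The genuine gap is Stage 1, and it is not a technicality. The vanishing of $\vec u$ on $\sP\Omega_{-}[X_0,R]$ does give $\tilde{\vec u}\in W^{0,1}_2(Q_{-}(X_0,R))$ with $D\tilde{\vec u}=\chi_{\Omega_{-}[X_0,R]}D\vec u$ (the paper uses exactly this), but it does not make $\tilde{\vec u}$ a weak solution across $\partial\Omega$: for a test function $\vec\phi\in C^\infty_c(Q_{-}(X_0,R))^m$ which is nonzero on $\partial\Omega\cap Q_{-}(X_0,R)$, the identity you assert is off by a distributional conormal-derivative (boundary flux) term supported on $\partial\Omega\cap Q_{-}(X_0,R)$, and this term does not vanish merely because the Dirichlet trace of $\vec u$ does. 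Concretely, take $d=m=1$, $\Omega=\set{x>0}$, $X_0=(0,0)$, $R=1$, and let $u$ be a smooth solution of $u_t-u_{xx}=f$ with $f>0$ on $(-1,0)\times(0,1)$ vanishing on the parabolic boundary; for $\vec\phi\geq 0$ with $\phi(\cdot,0)\not\equiv 0$ your extended identity fails by the flux term $\int u_x(\tau,0)\,\phi(\tau,0)\,d\tau>0$. The density argument you propose cannot repair this: replacing $\vec\phi$ by $\theta_\epsilon\vec\phi$ with $\theta_\epsilon$ vanishing near the graph leaves the error $\int A^{\alpha\beta}D_\beta\vec u\,D_\alpha\theta_\epsilon\,\vec\phi$, where $\abs{D\theta_\epsilon}\sim\epsilon^{-1}$ is paired against $D\vec u\in L_2$ only, so it need not tend to zero. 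Since Stage 3 is precisely where the extension is invoked (the interior case $Q_{-}(X_0,R)\subset\Omega$ being Struwe's lemma, where there is no issue), your time-oscillation estimate for $U$ is missing a flux term through $\partial\Omega\cap Q_{-}(X_0,R)$ that, for $L_\infty$ coefficients, is not controlled by $\norm{D\vec u}_{L_2}$ and $\norm{\vec f}_{L_1}$ in any evident way. Note that the paper does not claim the blanket extension you state: it asserts only the single identity obtained from the particular test function $\zeta(x)\vec\Lambda$, derived by following the computation of Brown, Hu and Lieberman, and that derivation (not a generic density argument) is where the real work lies; as written, your Stage 1 does not supply it.
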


\begin{proof}
We modify the proof of \cite[Lemma~3]{Struwe}.
Without loss of generality, we may assume $X_0=0$.
Let $\zeta=\zeta(x)$ be a smooth function defined on $\bR^d$ such that
\[
0\leq \zeta \leq 1,\quad \supp \zeta \subset B(R),\quad \zeta\equiv 1\,\text{ on }\, B(R/2),\quad\text{and}\quad  \abs{D\zeta} \le 4/R.
\]
Setting $\delta^{-1}=\int_{B(R)}\zeta(x)\,dx$ and denote
\[
\vec\beta(t):= \delta \int_{B(R)} \zeta(x) \tilde{\vec u}(t,x)\,dx=\delta \int_{\omega(t)\cap B(R)}\zeta(x) \vec u(t,x)\,dx,\quad \bar{\vec \beta}:=R^{-2}\int_{-R^2}^0 \vec \beta(t)\,dt.
\]
Since $(\tilde{\vec u})_{0,R}$ minimizes the integral $\int_{Q_{-}(R)} \abs{\tilde{\vec u}-\vec c}^2$ among $\vec c\in \bR^m$, we obtain
\begin{equation}					\label{eq6.10gs}
\int_{Q_{-}(R)} \abs{\tilde{\vec u}-(\tilde{\vec u})_{0,R}}^2  \leq \int_{Q_{-}(R)} \abs{\tilde{\vec u}-\bar{\vec \beta}}^2 \leq 2 \int_{Q_{-}(R)} \abs{\tilde{\vec u}-\vec\beta(t)}^2+ 2 \int_{Q_{-}(R)} \abs{\vec \beta(t)-\bar{\vec \beta}}^2.
\end{equation}
Notice that $\tilde{\vec u}\in W^{0,1}_2(Q_{-}(R))$ and $D \tilde{\vec u}= \chi_{\Omega_{-}[R]}\, D \vec u$ in $Q_{-}(R)$.
Therefore, by a variant of Poincar\'e's inequality, we have
\begin{equation}					\label{eq6.11sj}
 \int_{Q_{-}(R)}\abs{\tilde{\vec u}-\vec\beta(t)}^2 \,dX=\int_{-R^2}^0 \int_{B(R)} \abs{\tilde{\vec u}(t,x)-\vec\beta(t)}^2\,dx\,dt \leq N \int_{\Omega_{-}[R]} \abs{D\vec u}^2\,dX.
 \end{equation}
We claim that for all $s$ and $t$ satisfying $-R^2 <s<t<0$, we have
\begin{equation}				\label{eq8.8ap}
\abs{\vec \beta(t)-\vec\beta(s)}^2 \leq N R^{-d} \int_{\Omega_{-}[R]} \abs{D\vec u}^2 + NR^{-2d}\norm{\vec f}^2_{L_1(\Omega_{-}[R])}.
\end{equation}
Assume the estimate \eqref{eq8.8ap} for the moment.
By the definition of $\bar{\vec \beta}$, we then obtain
\begin{multline}					\label{eq6.14ch}
\int_{Q_{-}(R)} \abs{\vec \beta(t)-\bar{\vec \beta}}^2\,dX =  \abs{B(R)} \int_{-R^2}^0 \abs{\vec \beta(t)-\bar{\vec \beta}}^2 \,dt\\
\leq  NR^{d-2} \int_{-R^2}^0 \!\int_{-R^2}^0 \bigabs{\vec\beta(t)-\vec\beta(s)}^2\,ds\,dt \leq N R^2 \int_{\Omega_{-}[R]} \abs{D\vec u}^2\,dX+ NR^{2-d}\norm{\vec f}^2_{L_1(\Omega_{-}[R])}.
\end{multline}
By combining \eqref{eq6.10gs}, \eqref{eq6.11sj}, and \eqref{eq6.14ch}, we obtain \eqref{eq8.6ap}.

It remains us to prove the estimate \eqref{eq8.8ap}.
Setting $\vec \eta=\vec \eta(x)=\zeta(x) \vec \Lambda$, where $\vec \Lambda \in \bR^m$ is a constant column vector, and following the calculation in Brown et al. \cite{BHL}, we obtain
\begin{multline*}
\int_{\omega(t)\cap B(R)} \zeta(x) \vec u(t,x) \cdot \vec \Lambda\,dx-\int_{\omega(s)\cap B(R)} \zeta(x) \vec u(s,x) \cdot \vec \Lambda\,dx\\
+\int_{\Omega(s,t)} \vec \Lambda^T \vec A^{\alpha\beta}(X) D_\beta \vec u(X) D_\alpha \zeta(x)\,dX=\int_{\Omega(s,t)}\vec f(X) \cdot \vec \Lambda\zeta(x)\,dX.
\end{multline*}
Notice that $\delta^{-1} \geq  2^{-d}\abs{B(1)} R^d$.
Therefore, by using the properties of  the function $\zeta$, we get for all $s$ and $t$ satisfying $-R^2 <s<t<0$ that
\[
\bigl(\vec\beta(t)-\vec \beta(s)\bigr)\cdot \vec \Lambda \leq N R^{-d-1}\abs{\vec \Lambda} \int_{\Omega_{-}[R]}\abs{D\vec u}\,dX+N R^{-d} \abs{\vec \Lambda} \int_{\Omega_{-}[R]} \abs{\vec f}\,dX.
\]
By taking $\vec \Lambda=\vec\beta(t)-\vec \beta(s)$ in the above inequality and using H\"older's inequality and Cauchy's inequality with $\epsilon$, we obtain \eqref{eq8.8ap}.
The proof is complete.
\end{proof}

\begin{lemma} \label{lem:G-06}
Let $\Omega$ be a time-varying $H_1$ (graph) domain in $\bR^{d+1}$.
Let $a^{\alpha\beta}$ satisfy \eqref{eqP-07} and let $\mathscr{E}$ be as in \eqref{eqP-08w}, where $A^{\alpha\beta}_{ij}$ are the coefficients of the operator $\sL$.
Then, there exists $\mathscr{E}_0= \mathscr{E}_0(d, \nu_0,M)>0$ such that if $\mathscr{E} <\mathscr{E}_0$, then the condition $\LH$ is satisfied with $\mu_0=\mu_0(d, \nu_0, M)$, $R_{max}=R_a$, and $N_1=N_1(d,m,\nu_0,M)$. Here, we set $R_a=\infty$ if $\Omega$ is a time-varying $H_1$ graph domain.
\end{lemma}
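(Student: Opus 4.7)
The plan is a Campanato-type decay iteration around the scalar diagonal operator $\sL_0 := \partial_t - D_\alpha(a^{\alpha\beta} D_\beta)$, which we view as acting componentwise on $\bR^m$-valued functions. By \cite[\S VI.8]{Lieberman}, the scalar $\sL_0$ satisfies condition $\LH$ in $\Omega$ with some exponent $\tilde\mu_0 = \tilde\mu_0(d,\nu_0,M)$ and constant $\tilde N_1 = \tilde N_1(d,\nu_0,M)$, uniformly in the scale $R\in(0,R_a)$. Coupling the resulting pointwise H\"older estimate with the Poincar\'e-type bound of Lemma~\ref{lem8.1ap} applied to $\sL_0$ (with $\vec f=0$), I would first upgrade this into a Campanato-type decay: for every weak solution $v$ of $\sL_0 v=0$ in $\Omega_{-}[X_0,R]$ vanishing on $\sP\Omega_{-}[X_0,R]$ and every $0<\rho\leq R/2$,
\[
\int_{Q_{-}(X_0,\rho)} \bigabs{\tilde v - (\tilde v)_{X_0,\rho}}^2 \leq C_\sharp (\rho/R)^{d+2+2\tilde\mu_0} \int_{Q_{-}(X_0,R)} \abs{\tilde v}^2,
\]
where $\tilde v=\chi_{\Omega_{-}[X_0,R]}v$ and $C_\sharp=C_\sharp(d,\nu_0,M)$.

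Given a weak solution $\vec u$ of $\sL\vec u=0$ in $\Omega_{-}[X_0,R]$ vanishing on $\sP\Omega_{-}[X_0,R]$, and any $r\in(0,R/2)$, I would componentwise solve the $\sL_0$-Dirichlet problem $\sL_0 v^i=0$ in $\Omega_{-}[X_0,r]$ with $v^i-u^i\in\rV^{0,1}_2(\Omega_{-}[X_0,r])$, whose solvability is guaranteed by Brown et al.\ \cite{BHL}, and set $w^i:=u^i-v^i\in\rV^{0,1}_2(\Omega_{-}[X_0,r])$. Since $\sL\vec u=0$, the difference satisfies
\[
\sL_0 w^i = D_\alpha\!\left((A^{\alpha\beta}_{ij}-a^{\alpha\beta}\delta_{ij}) D_\beta u^j\right),
\]
so the energy inequality for $\sL_0$ together with $\bigabs{A^{\alpha\beta}_{ij}-a^{\alpha\beta}\delta_{ij}}\leq\mathscr{E}$ gives $\norm{Dw^i}_{L_2(\Omega_{-}[X_0,r])}^2\leq C\mathscr{E}^2\norm{D\vec u}_{L_2(\Omega_{-}[X_0,r])}^2$. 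A parabolic Poincar\'e inequality adapted to $\rV^{0,1}_2(\Omega_{-}[X_0,r])$ (valid because $w^i(\tau,\cdot)$ lies in $W^{1,2}_0(\omega(\tau)\cap B(x_0,r))$ for a.e.\ $\tau$, via the $H_1$ structure of $\partial\Omega$) yields $\norm{\tilde w^i}_{L_2(Q_{-}(X_0,r))}^2\leq Cr^2\norm{Dw^i}_{L_2(\Omega_{-}[X_0,r])}^2$. A boundary Caccioppoli estimate supplies $\norm{D\vec u}_{L_2(\Omega_{-}[X_0,r])}^2\leq Cr^{-2}\norm{\vec u}_{L_2(\Omega_{-}[X_0,2r])}^2$, and assembling everything for $0<\rho\leq r/2$,
\[
\int_{Q_{-}(X_0,\rho)} \bigabs{\tilde{\vec u}-(\tilde{\vec u})_{X_0,\rho}}^2 \leq C\bigl[(\rho/r)^{d+2+2\tilde\mu_0}+\mathscr{E}^2\bigr] \int_{Q_{-}(X_0,2r)} \abs{\tilde{\vec u}}^2.
\]

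Choosing $\mathscr{E}_0=\mathscr{E}_0(d,\nu_0,M)$ small enough, the classical Campanato iteration lemma (in its parabolic form) then produces, for some $\mu_0=\mu_0(d,\nu_0,M)\in(0,\tilde\mu_0)$ and all $0<\rho\leq R<R_a$,
\[
\int_{Q_{-}(X_0,\rho)} \bigabs{\tilde{\vec u}-(\tilde{\vec u})_{X_0,\rho}}^2 \leq C (\rho/R)^{d+2+2\mu_0} \int_{Q_{-}(X_0,R)} \abs{\tilde{\vec u}}^2,
\]
and the parabolic Campanato embedding converts this into the H\"older seminorm bound required by part~i) of $\LH$, with $N_1=N_1(d,m,\nu_0,M)$. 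Part~ii) follows by applying exactly the same argument to $\sLt$, with the base operator being the adjoint of $\sL_0$. The main obstacle I expect is the very first step: transferring the pointwise H\"older bound of \cite{Lieberman} into the Campanato-type $L^2$-oscillation decay for $\sL_0$ uniformly in $X_0\in\Omega$ up to scale $R_a$. This requires a careful use of Lemma~\ref{lem8.1ap} to relate the zero extension $\tilde v$ to the Dirichlet energy on $\Omega_{-}[X_0,R]$, and to keep all constants scale-invariant. A secondary technical point is the Poincar\'e estimate on $\tilde w^i$, which also hinges on the $H_1$ structure guaranteeing a quantitative vanishing of $w^i(\tau,\cdot)$ on the spatial boundary of each time slice. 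Once both are secured, the remaining Campanato iteration is routine.
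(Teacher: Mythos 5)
Your overall strategy is the same as the paper's: componentwise comparison with the scalar diagonal operator $\sL_0=\partial_t-D_\alpha(a^{\alpha\beta}D_\beta\,\cdot\,)$, Lieberman's boundary regularity for $\sL_0$, the energy inequality of \cite{BHL} to bound $\norm{D\vec w}_{L_2}$ by $\mathscr{E}\norm{D\vec u}_{L_2}$, absorption of the $\mathscr{E}^2$-term by smallness, and a Campanato embedding at the end. The genuine gap is in the quantity you iterate. Your one-step estimate has the $L_2$-oscillation of $\tilde{\vec u}$ at radius $\rho$ on the left but the full norm $\int_{Q_{-}(X_0,2r)}\abs{\tilde{\vec u}}^2$ on the right, so it is not of the form $\phi(\rho)\le C\bigl[(\rho/r)^{\lambda}+\mathscr{E}^2\bigr]\phi(r)$ required by the Campanato/Giaquinta iteration lemma, and the $\mathscr{E}^2$-term cannot be absorbed as stated. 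When $Q_{-}(X_0,2r)$ meets $\partial\Omega$ within distance comparable to $r$, this can be repaired: the $H_1$ condition gives exterior measure density, so the mean of $\tilde{\vec u}$ is controlled by its oscillation and the two quantities are comparable. But for cylinders compactly contained in $\Omega$ the inequality is useless for absorption (test it with $\vec u$ close to a nonzero constant vector, which solves the system), and there one must first subtract the mean before comparing with $\sL_0$, using the interior Caccioppoli $\norm{D\vec u}^2\lesssim r^{-2}\norm{\vec u-(\tilde{\vec u})_{X_0,2r}}^2$. Your sketch treats all $X_0$ uniformly and so misses this interior regime entirely; note that for cylinders that never see the boundary, condition $\LH$ reduces to an interior H\"older estimate for the perturbed system, which for systems is itself a smallness result (this is \cite[Lemma~2.2]{CDK} in the paper), and the passage from boundary-centered plus interior decay to arbitrary $X\in\Omega$ and all $R<R_a$ requires the standard interior/boundary combination, which you also omit.

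The paper avoids the mismatch by iterating the Dirichlet energy instead of the oscillation: it proves $\int_{\Omega_{-}[\tilde X,\rho]}\abs{D\vec u}^2\le C(\rho/r)^{d+2\mu_1}\int_{\Omega_{-}[\tilde X,r]}\abs{D\vec u}^2$ at boundary points $\tilde X\in\partial\Omega$ (comparison with $\sL_0$, the oscillation estimates of \cite[Theorems~6.30 and 6.32]{Lieberman}, the energy inequality of \cite{BHL}, and the absorption lemma of \cite[\S III.2]{Gi83} --- here both sides carry the same quantity, so absorption is legitimate), quotes the interior analogue from \cite[Lemma~2.2]{CDK}, combines the two, and only at the very end converts energy decay into the Campanato decay for $\tilde{\vec u}$ via Lemma~\ref{lem8.1ap} together with the energy inequality, followed by Campanato's characterization. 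To rescue your route, either switch the iterated quantity to $\int\abs{D\vec u}^2$ as the paper does, or run two separate iterations (mean-subtracted in the interior, full norm near the boundary using exterior density) and glue them; as written, the key absorption step does not go through.
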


\begin{proof}
We shall prove below that there exists a number $\mathscr{E}_1=\mathscr{E}_1(d,\nu_0,M)>0$ such that if $\mathscr{E}<\mathscr{E}_1$, then the following holds:
There exist positive constants $\mu_1=\mu_1(d,\nu_0,M)$ and $C_1=C_1(d,m,\nu_0,M)$ such that for any $\tilde{X}\in \partial\Omega=\sP\Omega$ and $R\in (0,R_a)$, if $\vec u$ is a weak solution of $\sL \vec u=0$ in $\Omega_{-}[\tilde{X},R]$ vanishing on $\sP\Omega_{-}[\tilde{X},R]$, then we have
\begin{equation}							\label{eqG-65}
\int_{\Omega_{-}[\tilde{X}, \rho]}\abs{D \vec u}^2 \le C_1 \left(\frac{\rho}{r}\right)^{d+2\mu_1} \int_{\Omega_{-}[\tilde{X},r]} \abs{D \vec u}^2,\quad \forall 0<\rho<r \leq R.
\end{equation}

We also note that by \cite[Lemma~2.2]{CDK}, there is $\mathscr{E}_2=\mathscr{E}_2(d,\nu_0)>0$ such that if $\mathscr{E}<\mathscr{E}_2$, then the following holds:
There exists a constant $\mu_2=\mu_2(d,\nu_0) \in (0,1]$ such that if $\vec u$ is a weak solution of $\sL \vec u=0$ in $Q_{-}(X,R)\subset \Omega$, then we have
\begin{equation}							\label{eqG-65a}
\int_{Q_{-}(X,\rho)} \abs{D\vec u}^2\leq C_2\left(\frac{\rho}{r}\right)^{d+2\mu_2}\int_{Q_{-}(X,r)}\abs{D\vec u}^2, \quad \forall 0<\rho<r\leq R,
\end{equation}
where $C_2=C_2(d,m, \nu_0)$.
Then we combine \eqref{eqG-65} and \eqref{eqG-65a}, via a standard method in boundary regularity theory to conclude that if $\mathscr{E}<\mathscr{E}_1 \wedge \mathscr{E}_2=:\mathscr{E}_0$, then for all $X\in \Omega$ and $0<R<R_a$, the following holds:
If  $\vec u$ is a weak solution of $\sL \vec u=0$ in $\Omega_{-}[X,R]$ vanishing on $S\Omega_{-}[X,R]$, then we have
\begin{equation}					\label{eq6.05jh}
\int_{\Omega_{-}[X,\rho]} \abs{D\vec u}^2\leq N \left(\frac{\rho}{r}\right)^{d+2\mu_0}\int_{\Omega_{-}[X,r]}\abs{D\vec u}^2, \quad \forall 0<\rho<r\leq R,
\end{equation}
where $\mu_0=\mu_1\wedge \mu_2$ and $N=N(d,m,\nu_0,M)$.
By Lemma~\ref{lem8.1ap}, the estimate \eqref{eq6.05jh},  and the energy inequality of Brown et. al \cite{BHL}, we have for all $Y\in Q_{-}(X,R/4)$ and $r\in (0,R/4]$ that
\begin{align*}
\int_{Q_{-}(Y,r)} \abs{\tilde{\vec u}-(\tilde{\vec u})_{Y,r}}^2 &\leq N r^2 \int_{\Omega_{-}[Y,r]} \abs{D \vec u}^2\leq N r^2 \left(\frac{r}{R}\right)^{d+2\mu_0}\int_{\Omega_{-}[Y,R/4]}\abs{D\vec u}^2\\
&\leq N\left(\frac{r}{R}\right)^{d+2+2\mu_0} \int_{\Omega_{-}[Y,R/2]} \abs{\vec u}^2 \leq N r^{d+2+2\mu_0} R^{-2\mu_0} \fint_{Q_{-}(X,R)} \abs{\tilde{\vec u}}^2,
\end{align*}
where $N=N(d,m,\nu_0,M)$.
Then by the Campanato's characterization of H\"older continuous functions (see e.g., \cite[Lemma~2.5]{CDK}), we obtain
\[
[\tilde{\vec u}]_{\mu_0/2,\mu_0;Q_{-}(X,R/4)}^2 \leq C R^{-2\mu_0} \fint_{Q_{-}(X,R)} \abs{\tilde{\vec u}}^2.
\]
Then, the above inequality together with a standard covering argument yields part i) of the condition $\LH$.
The other part of the condition $\LH$ is similarly obtained.

Now, it only remains for us to prove the estimate \eqref{eqG-65}.
For $\tilde{X}\in \partial\Omega$ and $R\in (0,R_a)$ given, let $\vec u$ be a weak solution of $\sL \vec u =0$ in $\Omega_{-}[\tilde{X},R]$ vanishing on $\sP \Omega_{-}[\tilde{X},R]$.
Denote by $\sL_0$ the parabolic operator acting on scalar functions $v$ as follows:
\[
\sL_0 v = v_t - D_\alpha(a^{\alpha\beta}D_\beta v).
\]
For $r \in (0, R]$, let $v^i$ be a unique weak solution in $V_2(\Omega_{-}(\tilde{X},r))$ of the problem
\[
\left\{\begin{array}{rcl}
\sL_0 v^i=0&\text{in}&\Omega_{-}[\tilde{X},r],\\
v^i= u^i&\text {on }&\sP(\Omega_{-}[\tilde{X},r]),
\end{array}\right.
\]
where $i=1,\ldots,m$.
Existence of such $v^i$ follows from Brown et al. \cite{BHL}.
We claim that there are positive constants $\mu=\mu(d,\nu_0,M)$ and $N=N(d,m,\nu_0,M)$ such that the following estimate holds:
\begin{equation}							\label{eqG-66}
\int_{\Omega_{-}[\tilde{X},\rho]}\abs{D\vec v}^2 \leq N\left(\frac{\rho}{r}\right)^{d+2\mu}\int_{\Omega_{-}[\tilde{X},r]}\abs{D\vec v}^2,\quad \forall 0<\rho<r.
\end{equation}

We may assume that $\rho<r/8$ because otherwise \eqref{eqG-66} becomes trivial.
Since each $v^i$ vanishes on $\sP\Omega_{-}[\tilde{X},r]$, it follows from \cite[Theorem~6.32]{Lieberman} and \cite[Theorem~6.30]{Lieberman} that there exist $\mu=\mu(d,\nu_0,M)>0$ and $N=N(d, \nu_0,M)>0$ such that
\begin{equation}							\label{eqG-67}
\osc_{\Omega_{-}[\tilde{X},2\rho]} v^i \leq N \rho^{\mu} r^{-\mu}\sup_{\Omega_{-}[\tilde{X},r/4]} \abs{v^i} \leq N \rho^{\mu} r^{-\mu-d/2-1}\norm{v^i}_{L_2(\Omega_{-}[\tilde{X},r/2])}.
\end{equation}
In particular, the estimate \eqref{eqG-67} implies $v^i(\tilde{X})=0$.
Then, by the energy inequality of Brown et al. \cite{BHL} and \cite[Lemma~4.2]{HK07},  we obtain (recall that $\rho<r/8$)
\begin{align*}
\int_{\Omega_{-}[\tilde{X},\rho]} \abs{D v^i}^2 &\leq N \rho^{-2} \int_{\Omega_{-}[\tilde{X},2\rho]}\abs{v^i}^2= N \rho^{-2}\int_{\Omega_{-}[\tilde{X},2\rho]}\abs{v^i(Y)-v^i(\tilde{X})}^2\,dY \\
&\leq N \rho^{d}\left(\osc_{\Omega_{-}[\tilde{X},2\rho]}\,v^i\right)^2 \leq N \left(\frac{\rho}{r}\right)^{d+2\mu} r^{-2}\int_{\Omega_{-}[\tilde{X},r/2]}\abs{v^i}^2\\
&\leq N\left(\frac{\rho}{r}\right)^{d+2\mu}\int_{\Omega_{-}[\tilde{X},r]}\abs{D v^i}^2,\qquad i=1,\ldots,m.
\end{align*}
where $N=N(d,\nu_0,M)$.
This completes the proof of the estimate \eqref{eqG-66}.
Next, notice that $\vec w:=\vec u-\vec v$ belongs to $V_2(\Omega_{-}[\tilde{X},r])$, vanishes on $\sP(\Omega_{-}[\tilde{X},r])$, and satisfies weakly
\[
\sL_0 \vec w = D_\alpha\left(\bigl(\vec A^{\alpha\beta}-a^{\alpha\beta}I_m \bigr)D_\beta\vec u\right).
\]
Therefore, by the energy inequality of Brown et al. \cite{BHL}, we obtain
\begin{equation}							\label{eqG-70w}
\int_{\Omega_{-}[\tilde{X},r]}\abs{D\vec w}^2\leq N \mathscr{E}^2 \int_{\Omega_{-}[\tilde{X},r]}\abs{D\vec u}^2,
\end{equation}
where $\mathscr{E}$ is defined as in \eqref{eqP-08w}.
By combining \eqref{eqG-66} and \eqref{eqG-70w}, we obtain
\[
\int_{\Omega_{-}[\tilde{X}, \rho]}\abs{D\vec u}^2 \leq N\left(\frac{\rho}{r}\right)^{d+2\mu} \int_{\Omega_{-}[\tilde{X},r]}\abs{D\vec u}^2+N\mathscr{E}^2 \int_{\Omega_{-}[\tilde{X},r]} \abs{D\vec u}^2, \quad \forall 0<\rho<r.
\]
Now, choose a number $\mu_1\in (0,\mu)$.
Then, by a well known iteration argument (see, e.g., \cite[\S III.2]{Gi83}), we find that there exists $\mathscr{E}_1$ such that if $\mathscr{E}< \mathscr{E}_1$, then we have the estimate \eqref{eqG-65}. 
The lemma is proved.
\end{proof}

%------------------------------------------------------------------------------%
\begin{acknowledgment}
%------------------------------------------------------------------------------%
We thank Steve Hofmann for valuable comments.
Hongjie Dong was partially supported by the National Science Foundation under agreement No.~DMS-0800129.
Seick Kim was supported by supported by Mid-career Researcher Program through NRF grant funded by the MEST (No.~2010-0027491) and also WCU(World Class University) program through the National Research Foundation of Korea(NRF) funded by the Ministry of Education, Science and Technology  (R31-2008-000-10049-0).
\end{acknowledgment}

%------------------------------------------------------------------------------%

\end{document}